\newcommand{\Z}{{\mathbb Z}}
\newcommand{\R}{{\mathbb R}}
\newcommand{\D}{{\partial}}
\newcommand{\rD}{{\bar{\partial}}}
\newcommand{\genh}{{\mathfrak H}}
\newcommand{\rgenh}{\bar{\mathfrak H}}
\newcommand{\M}{{\mathfrak M}}
\newcommand{\A}{{\cal A}}
\DeclareMathOperator{\im}{im}
\DeclareMathOperator{\Int}{int}
\DeclareMathOperator{\sign}{sign}
\newtheorem{lemma}{Lemma}[section]
\newtheorem{axiom}{Axiom}
\newtheorem{theorem}[lemma]{Theorem}
\newtheorem{proposition}[lemma]{Proposition}
\newtheorem{definition}[lemma]{Definition}
\title{Generalized equivariant homology on simplicial complexes}
\author{jason hanson}
\date{}
\begin{document}

\maketitle

\begin{abstract}
A careful account is given of generalized equivariant homology theories on the category of topological pairs acted on by a group.  In particular, upon restriction to the category of equivariant simplicial complexes, the equivalence of equivariant simplicial homology (also known as Bredon homology), the second derived term of the Atiyah--Hirzebruch spectral sequence, and equivariant singular homology is demonstrated.
\end{abstract}

\section{Introduction and notation}

Throughout, we let $G$ denote a topological group.  On the category of pairs of $G$--complexes (cw--complexes with action by $G$), an extensive discussion of generalized equivariant cohomology theories is given in \cite{Bredon1}; and that for generalized equivariant homology theories, although not as extensive in scope, is given in \cite{Wilson}.  Our goal here is twofold.  First, we wish to give a discussion of equivariant homology theories for a larger category of $G$--spaces, and to bridge some of the gaps present in both of the above references.  Second, we wish to see how the theory plays out when restricted to the category of equivariant simplicial complexes; that is, we wish to provide a generalized equivariant version of the axiomatic approach given in \cite{ES}.

For the first half of this paper, we will work in the category of arbitrary $G$--pairs and equivariant maps.  Specifically, the objects of the category consist of pairs $(X,A)$, where the topological spaces $A\subseteq X$ have continuous (left) $G$--action; i.e., $A$ is a $G$--subspace of $X$.  The morphisms of the category are continuous equivariant maps: continuous maps $f:(X,A)\rightarrow(Y,B)$ between $G$--pairs such that $f(gx)=gf(x)$, for all $g\in G$ and $x\in X$.

In the second half, we will restrict our attention to simplicial complexes with $G$--action.  For us, these will be topological spaces that can be decomposed as a union of {\em equivariant cells} of the form $G/H\times\Delta_p$, where $\Delta_p$ is the standard $p$--simplex given the trivial $G$--action, and $H\leq G$ (that is, $H$ is a subgroup of $G$).

In an effort to reduce some of the notational clutter, we will introduce the following conventions.  First, we will use the same label for a map and its restriction when our intentions are unambiguous.  E.g., if $f:X\rightarrow Y$ is a map and $A\subseteq X$, $B\subseteq Y$ are such that $f(A)\subseteq B$, then we will write $f:A\rightarrow B$ for the corresponding restriction of domain and range.  Second, any unlabeled map should be assumed to be the inclusion map; e.g., $A\rightarrow X$.  Third and finally, the previous two conventions take precedence over any functor; for example, if $F$ is a covariant functor, then $F(f):F(A)\rightarrow F(B)$ and $F(A)\rightarrow F(X)$ denote the functor $F$ applied to the restriction $f:A\rightarrow B$ and the inclusion $A\rightarrow X$, respectively.

\section{Generalized equivariant homology}

\subsection{Axioms}

Let $R$ be a ring, and suppose that $\genh_\ast^G$ is a covariant functor from the category of $G$--pairs and equivariant maps to the category of graded $R$--modules and linear maps.  In particular if $(X,A)$ is a $G$--pair, then $\genh_\ast^G(X,A)=\oplus_{n\in\Z}\genh_n^G(X,A)$ is a graded $R$--module.  For a $G$--space $X$, we set $\genh_\ast^G(X)\doteq\genh_\ast^G(X,\emptyset)$.  Suppose that in addition, $\D_\ast$ is a natural transformation of functors such that for $G$--pairs $(X,A)$, $\D_n:\genh_n^G(X,A)\rightarrow\genh_{n-1}^G(A)$ is $R$--linear for all $n\in\Z$.  The pair $(\genh_\ast^G,\D_\ast)$ is said to be a {\bf generalized equivariant homology theory} if the following three axioms are satisfied.

\begin{axiom}[Exactness]\label{ax:exact}
If $(X,A)$ is a $G$--pair, then there is a long exact sequence of $R$--modules
$$\genh_n^G(A)
  \rightarrow
  \genh_n^G(X)
  \rightarrow
  \genh_n^G(X,A)
  \xrightarrow{\D_n}
  \genh_{n-1}^G(A).
$$
\end{axiom}

\begin{axiom}[Homotopy]\label{ax:homotopy}
Suppose that the maps $f,g:(X,A)\rightarrow(Y,B)$ are $G$--homotopic: there exists an equivariant map $F:(X,A)\times[0,1]\rightarrow(Y,B)$, where the interval $[0,1]$ is given the trivial $G$--action.  Then $\genh_\ast^G(f)=\genh_\ast^G(g)$.
\end{axiom}

\begin{axiom}[Excision]\label{ax:excision}
If $(X,A)$ is a $G$--pair and $U$ is a $G$--subspace such that $\bar{U}\subseteq\Int(A)$, then $\genh_\ast^G(X\setminus U,A\setminus U)\rightarrow\genh_\ast^G(X,A)$ is an isomorphism.
\end{axiom}

Note that as a consequence of the exactness axiom, $\genh_\ast^G(A,A)=0$; and in particular, $\genh_\ast^G(\emptyset)=\genh_\ast^G(\emptyset,\emptyset)=0$.

One may also impose conditions involving the behavior under group homomorphisms, see \cite{Lueck}; however, we will not have the need for these.

\subsection{Fundamental properties}

Many of the familiar properties of nonequivariant singular homology carry over to generalized equivariant homology.  We explicitly state a few for reference.

\begin{proposition}[Exact sequence of triple]
\label{prop:triple}
If $(X,B,A)$ is a triple of $G$--spaces, then there is a long exact sequence of $R$--modules:
$$\genh_n^G(B,A)
  \rightarrow
  \genh_n^G(X,A)
  \rightarrow
  \genh_n^G(X,B)
  \xrightarrow{\D_n}
  \genh_{n-1}^G(B,A),
$$
where $\D_\ast$ here is the composition $\genh_\ast^G(X,B)\xrightarrow{\D_\ast}\genh_{\ast-1}^G(B)\rightarrow\genh_{\ast-1}^G(B,A)$.
\end{proposition}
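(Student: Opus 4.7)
The plan is to deduce the triple exact sequence by weaving together the three pair exact sequences supplied by Axiom~\ref{ax:exact} applied to $(X,A)$, $(X,B)$, and $(B,A)$. The inclusion-induced maps of pairs $(B,A)\to(X,A)$ (inclusion on $B\hookrightarrow X$, identity on $A$) and $(X,A)\to(X,B)$ (identity on $X$, inclusion $A\hookrightarrow B$) link these three sequences into a commutative braid; commutativity of the squares involving only inclusion-induced maps is functoriality, and commutativity of the squares involving $\D_\ast$ is its naturality. All maps in the proposed triple sequence already appear in this braid, the connecting map being the composite defined in the statement.

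First, I would verify that consecutive maps compose to zero. The composite $\genh_n^G(B,A)\to\genh_n^G(X,A)\to\genh_n^G(X,B)$ is the functor applied to a pair map factoring through $(B,B)$, so it vanishes by the remark $\genh_\ast^G(B,B)=0$ following the axioms. The composite $\genh_n^G(X,A)\to\genh_n^G(X,B)\xrightarrow{\D_n}\genh_{n-1}^G(B)\to\genh_{n-1}^G(B,A)$ vanishes because, by naturality, the image of an element of $\genh_n^G(X,A)$ in $\genh_{n-1}^G(B)$ already lies in the image of $\genh_{n-1}^G(A)\to\genh_{n-1}^G(B)$, which is killed by $\genh_{n-1}^G(B)\to\genh_{n-1}^G(B,A)$ by exactness of the $(B,A)$ sequence. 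The composite $\genh_n^G(X,B)\to\genh_{n-1}^G(B,A)\to\genh_{n-1}^G(X,A)$ vanishes similarly, using naturality of $\D_\ast$ applied to the pair map $(B,A)\to(X,A)$ and exactness of the $(X,A)$ sequence (since two consecutive boundary-then-inclusion-to-$\genh_{n-1}^G(X,A)$ steps reduce to $\D_n\circ\D_n=0$ after passing through $\genh_{n-1}^G(X)$).

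Finally, I would establish exactness at each of the three node types by a diagram chase on the braid. The chases at $\genh_n^G(X,A)$ and $\genh_{n-1}^G(B,A)$ are the easy ones: each amounts to lifting an element killed by the next map through one of the ambient pair sequences and identifying the lift with an element of the appropriate triple-sequence module. The chase at $\genh_n^G(X,B)$ is the central technical step and the main obstacle: given $\beta\in\genh_n^G(X,B)$ whose triple boundary vanishes in $\genh_{n-1}^G(B,A)$, exactness of the $(B,A)$ sequence produces $\alpha\in\genh_{n-1}^G(A)$ with $\D_n\beta$ equal to the image of $\alpha$, and one must then produce $\gamma\in\genh_n^G(X,A)$ mapping to $\beta$ by suitably modifying a preliminary lift (obtained from exactness of the $(X,B)$ sequence) by a correction class drawn from the $(X,A)$ sequence. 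Once the braid diagram is written down, each of these chases is the standard one and the argument is mechanical.
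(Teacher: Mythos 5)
Your proposal follows exactly the route the paper intends: the paper explicitly defers this proposition to Eilenberg--Steenrod (Theorem 10.2 of Chapter I), whose proof is the same braid of the three pair sequences for $(X,A)$, $(X,B)$, $(B,A)$ linked by functoriality and naturality of $\D_\ast$, followed by the standard diagram chases. The only blemish is a label slip in your third zero-composition argument --- the composite $\genh_n^G(X,B)\xrightarrow{\D_n}\genh_{n-1}^G(B,A)\to\genh_{n-1}^G(X,A)$ factors through $\genh_{n-1}^G(X)$ and dies by exactness of the $(X,B)$ sequence (not the $(X,A)$ sequence, and the invocation of ``$\D_n\circ\D_n=0$'' does not typecheck here) --- but this does not affect the substance of the argument.
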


\begin{proposition}[Naturality of sequences]
\label{prop:long-nat}
Given an equivariant map of $G$--triples $f:(X,B,A)\rightarrow(Y,D,C)$, there is a homomorphism of long exact sequences (that is, each square commutes):
$$\begin{CD}
  \genh_n^G(B,A)
    @>>>
      \genh_n^G(X,A)
        @>>>
          \genh_n^G(X,B)
            @>{\D_n}>>
              \genh_{n-1}^G(B,A)\\
  @V{\genh_n^G(f)}VV
    @V{\genh_n^G(f)}VV
      @V{\genh_n^G(f)}VV
        @V{\genh_{n-1}^G(f)}VV\\
  \genh_n^G(D,C)
    @>>>
      \genh_n^G(Y,C)
        @>>>
          \genh_n^G(Y,D)
            @>{\D_n}>>
              \genh_{n-1}^G(D,C).
\end{CD}$$
\end{proposition}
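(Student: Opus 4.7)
The plan is to decompose each of the three squares in the ladder into naturality squares coming from either (i) functoriality of $\genh^G_\ast$ applied to evident commutative diagrams of inclusions, or (ii) the naturality of the boundary $\D_\ast$ as already built into the definition of a generalized equivariant homology theory.

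First I would note that the equivariant map $f:(X,B,A)\to(Y,D,C)$ restricts to equivariant maps of pairs $f:(B,A)\to(D,C)$, $f:(X,A)\to(Y,C)$, and $f:(X,B)\to(Y,D)$ (using the convention that we reuse the name $f$ for restrictions). The horizontal arrows in the first two squares are $\genh^G_n$ applied to the inclusions $(B,A)\hookrightarrow(X,A)$ and $(X,A)\hookrightarrow(X,B)$, and likewise downstairs. Since the diagram of underlying equivariant maps
\[
\begin{CD}
(B,A) @>>> (X,A) @>>> (X,B) \\
@VfVV @VfVV @VfVV \\
(D,C) @>>> (Y,C) @>>> (Y,D)
\end{CD}
\]
commutes on the nose (both ways around equal $f$ restricted to the appropriate subspace and landing in the appropriate subspace of the target), covariant functoriality of $\genh^G_n$ gives commutativity of the first two squares immediately.

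For the third square, involving the triple boundary, I would unfold the definition from Proposition \ref{prop:triple}: $\D_n$ on the triple is the composition
$$
\genh^G_n(X,B)\xrightarrow{\D_n}\genh^G_{n-1}(B)\xrightarrow{\genh^G_{n-1}(\incl)}\genh^G_{n-1}(B,A),
$$
and similarly downstairs with $(Y,D,C)$ in place of $(X,B,A)$. Splitting the square along the intermediate group $\genh^G_{n-1}(B)$ (and $\genh^G_{n-1}(D)$) produces two subsquares. The left subsquare is exactly the naturality square of the natural transformation $\D_\ast$ applied to the map of pairs $f:(X,B)\to(Y,D)$, which is part of the data of a generalized equivariant homology theory. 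The right subsquare commutes by functoriality of $\genh^G_{n-1}$ applied to the commutative square of equivariant maps $(B,\emptyset)\to(B,A)$, $(D,\emptyset)\to(D,C)$ joined by the restrictions of $f$. Pasting the two subsquares gives commutativity of the third square.

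I do not anticipate any genuine obstacle here; the only thing to be careful about is the bookkeeping of which copy of $f$ (as a restriction) is being used at each stage, which the conventions set up in the introduction are designed to streamline.
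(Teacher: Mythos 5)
Your proof is correct and follows the standard argument the paper defers to (the Eilenberg--Steenrod reference): the first two squares commute by functoriality applied to the commuting diagram of inclusions, and the third square splits along $\genh^G_{n-1}(B)\to\genh^G_{n-1}(D)$ into a naturality square for $\D_\ast$ and a functoriality square. No gaps.
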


\begin{proposition}[Strong excision]
\label{prop:excision}
Suppose that $(X,A)$ is a $G$--pair and $U$ an open $G$--subspace of $A$.  If there exists an open $G$--subspace $V$ with $\bar{V}\subseteq U$ and such that $(X\setminus U,A\setminus U)$ is a deformation retract of $(X\setminus V,A\setminus V)$, then $\genh_\ast^G(X\setminus U,A\setminus U)\rightarrow\genh_\ast^G(X,A)$ is an isomorphism.
\end{proposition}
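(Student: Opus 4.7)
The plan is to factor the inclusion $(X\setminus U, A\setminus U) \to (X,A)$ through the intermediate pair $(X\setminus V, A\setminus V)$ and to show that each half of the factorization induces an isomorphism on $\genh_*^G$: the outer half by the excision axiom, the inner half by the homotopy axiom.

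For the outer half, I would apply Axiom \ref{ax:excision} directly to the open $G$-subspace $V$. Since $V$ is open in $X$ and satisfies $V\subseteq \bar V\subseteq U\subseteq A$, it already lies in $\Int(A)$; combined with $\bar V\subseteq U$ this yields the stronger containment $\bar V\subseteq \Int(A)$ required by the axiom. (Here I am using that $U$, being an open subset of $X$ contained in $A$, lies in $\Int(A)$, so the same is true of anything contained in $U$.) Axiom \ref{ax:excision} then gives that the inclusion $(X\setminus V, A\setminus V) \to (X, A)$ induces an isomorphism on $\genh_*^G$.

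For the inner half, I would convert the hypothesized equivariant deformation retraction into an isomorphism via the homotopy axiom. Write $i:(X\setminus U, A\setminus U)\to(X\setminus V,A\setminus V)$ for the inclusion and $r$ for the equivariant retraction going the other way; by definition $r\circ i$ is the identity on $(X\setminus U,A\setminus U)$ while $i\circ r$ is equivariantly homotopic to the identity on $(X\setminus V,A\setminus V)$. Applying $\genh_*^G$, functoriality and Axiom \ref{ax:homotopy} together make $\genh_*^G(i)$ and $\genh_*^G(r)$ mutually inverse isomorphisms. Composing with the excision isomorphism from the previous paragraph, and noting that the composite of the two inclusions is exactly the inclusion $(X\setminus U, A\setminus U)\to (X,A)$ named in the statement, completes the argument.

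There is no real obstacle; the only point requiring any care is verifying the hypothesis $\bar V\subseteq \Int(A)$ of Axiom \ref{ax:excision}, which as noted above is automatic from the open-in-$X$ interpretation of ``open $G$-subspace'' together with $\bar V\subseteq U$. Everything else is a clean two-step factorization using only functoriality, Axiom \ref{ax:homotopy}, and Axiom \ref{ax:excision}.
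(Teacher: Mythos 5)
Your overall strategy — factor $(X\setminus U,A\setminus U)\to(X\setminus V,A\setminus V)\to(X,A)$, apply the homotopy axiom to the first arrow via the deformation retraction and the excision axiom to the second — is exactly what the referenced Eilenberg--Steenrod argument does, and it is the right plan for this paper.

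However, the one step you flag as ``the only point requiring any care'' is in fact where your argument goes wrong. The hypothesis says $U$ is an open $G$--subspace \emph{of $A$}, i.e.\ open in $A$, not open in $X$; you silently replace this with ``open in $X$ and contained in $A$'', from which $U\subseteq\Int_X(A)$ and hence $\bar V\subseteq\Int_X(A)$ would be immediate. That replacement is not justified, and without it the chain $\bar V\subseteq U\subseteq\Int_X(A)$ simply breaks: when $U$ is only open in $A$, one can have $U\not\subseteq\Int_X(A)$ (think of $X$ a closed half--plane, $A$ its boundary line, $U$ an open interval of $A$, $V$ an open half--disk with $\bar V\subseteq U$: here $\bar V$ meets $A\setminus\Int_X(A)$). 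The entire reason the proposition is called \emph{strong} excision is that it must be usable precisely when $U$ fails to be open in $X$, so this is not a minor technicality.

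The containment $\bar V\subseteq\Int_X(A)$ is nevertheless true, but the deformation retraction hypothesis has to be invoked to prove it, not just the topology of $U$ and $V$. Sketch: take $x\in\bar V\subseteq U$. If $x\in V$, then $x\in\Int_X(A)$ because $V$ is open in $X$ and contained in $A$. If $x\in\bar V\setminus V$ and $x\in\Int_X(U)$, again $x\in\Int_X(A)$. The remaining case $x\in\partial_X(U)\cap U\setminus V$ is impossible: such an $x$ lies in $X\setminus V$, is a limit of points of $X\setminus U$, and the retraction $r:X\setminus V\to X\setminus U$ is the identity on $X\setminus U$, so continuity would force $r(x)=x$, contradicting $r(x)\in X\setminus U$ while $x\in U$. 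So one concludes $\bar V\subseteq\Int_X(A)$, and your factorization then goes through. You should replace your one--line ``automatic'' justification with an argument of this kind; as written, the proof has a genuine gap at exactly the step you claim is routine.
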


\begin{proposition}[Finite disjoint unions]
\label{prop:findisj}
Suppose $(X_k,A_k)$ is a $G$--pair for each $1\leq k\leq n$, and set $X\doteq\sum_{k=1}^nX_k$, $A\doteq\sum_{k=1}^nA_k$.  We have an isomorphism $\Sigma_\ast:\oplus_{k=1}^n\genh_\ast^G(X_k,A_k)\rightarrow\genh_\ast^G(X,A)$, defined by requiring $\Sigma_\ast$ to be equal to $\genh_\ast^G(X_k,A_k)\rightarrow\genh_\ast^G(X,A)$ when restricted to summands $\genh_\ast^G(X_k,A_k)$.
\end{proposition}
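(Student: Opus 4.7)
The plan is to induct on $n$; the base case $n=1$ is trivial (both sides reduce to $\genh_*^G(X_1, A_1)$ and $\Sigma_*$ is the identity), and it suffices to handle $n=2$, since writing $X = X_1 \sqcup (X_2 \sqcup \cdots \sqcup X_n)$ and similarly for $A$ reduces the general $n$-fold case to the two-summand case upon invoking the inductive hypothesis on the second factor.

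For $n=2$, the plan is to apply the triple long exact sequence of Proposition \ref{prop:triple} to $(X, A \cup X_2, A)$, together with two applications of the excision axiom. Since $X_2$ is clopen in $X = X_1 \sqcup X_2$, one has $\bar{X_2} = X_2 \subseteq \Int(A \cup X_2)$, and Axiom \ref{ax:excision} then yields $\genh_*^G(X_1, A_1) \cong \genh_*^G(X, A \cup X_2)$ via inclusion. Symmetrically, $A_1$ is clopen in $A \cup X_2 = A_1 \sqcup X_2$ with $\bar{A_1} = A_1 \subseteq \Int(A_1 \sqcup A_2)$, so excising $A_1$ identifies $\genh_*^G(X_2, A_2) \cong \genh_*^G(A \cup X_2, A)$.

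The crux of the argument is to show that the triple boundary $\partial_n : \genh_n^G(X, A \cup X_2) \to \genh_{n-1}^G(A \cup X_2, A)$ vanishes. I would invoke naturality (Proposition \ref{prop:long-nat}) along the inclusion $(X_1, A_1) \to (X, A \cup X_2)$: after the identifications above, this boundary factors as $\genh_n^G(X_1, A_1) \to \genh_{n-1}^G(A_1) \to \genh_{n-1}^G(A) \to \genh_{n-1}^G(A \cup X_2, A)$, and the last two inclusion-induced maps pass through $\genh_{n-1}^G(A, A) = 0$. The triple sequence then collapses to a short exact sequence
$$0 \to \genh_n^G(X_2, A_2) \to \genh_n^G(X, A) \to \genh_n^G(X_1, A_1) \to 0,$$
whose outer maps are inclusion-induced. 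A final appeal to naturality shows that the inclusion $\genh_n^G(X_1, A_1) \to \genh_n^G(X, A)$ followed by the quotient $\genh_n^G(X, A) \to \genh_n^G(X, A \cup X_2)$ is exactly the excision isomorphism, so this inclusion provides a splitting; $\Sigma_*$ is thereby identified with the resulting isomorphism.

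The main obstacle is really just bookkeeping: several inclusion-induced maps and excision isomorphisms must be coordinated, and Proposition \ref{prop:long-nat} will need to be invoked repeatedly to verify that the relevant triangles commute and that the maps appearing in the collapsed short exact sequence are precisely the inclusion-induced $\Sigma_*$ components. Beyond this, nothing more than the three axioms and Propositions \ref{prop:triple}, \ref{prop:long-nat} enters the argument.
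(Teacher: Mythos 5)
Your proof is correct and takes essentially the same route as the one the paper cites (Theorem 13.2 of Chapter~I in \cite{ES}): induction on the number of summands, with the two--summand case handled by excision on both ends of the long exact sequence of the triple $(X, A\cup X_2, A)$. For the vanishing of the triple boundary it is slightly cleaner to map the triple $(X_1, A_1, A_1)$ into $(X, A\cup X_2, A)$ and invoke Proposition~\ref{prop:long-nat} directly, so that the boundary factors through $\genh_{n-1}^G(A_1, A_1)=0$, but your factorization through $\genh_{n-1}^G(A_1)\rightarrow\genh_{n-1}^G(A)\rightarrow\genh_{n-1}^G(A\cup X_2,A)$ works just as well.
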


Proofs of the above are exactly the same as for a non--generalized non--equivariant homology theory: see the proofs of theorems 10.2, 10.3, 15.3, 15.4, 12.1, and 13.2 (respectively) of chapter I in \cite{ES}.

\subsection{Compact supports and disjoint unions}

The analogous statement in proposition \ref{prop:findisj} for infinite disjoint unions requires additional assumptions.  One possible route is to follow \cite{ES} and impose a compact support axiom.

\begin{definition}
The generalized equivariant homology theory $(\genh_\ast^G,\D_\ast)$ has {\bf compact supports} if for any $G$--pair $(X,A)$ and any $x\in\genh_\ast^G(X,A)$, there exists a compact $G$--pair $(X_c,A_c)\subseteq(X,A)$ and $x_c\in\genh_\ast^G(X_c,A_c)$ such that $x$ is the image of $x_c$ under $\genh_\ast^G(X_c,A_c)\rightarrow\genh_\ast^G(X,A)$.
\end{definition}

However, the compact support assumption may be too much to ask for in some situations; for instance, when $G$ is not compact.  In this case, we gain a fair amount by simply imposing the arbitrary disjoint union property.

\begin{definition}
The generalized equivariant homology theory $(\genh_\ast^G,\D_\ast)$ satisfies the {\bf arbitrary disjoint union property} if for any indexing set $\A$ and any collection of $G$--pairs $\{(X_\alpha,A_\alpha)\}_{\alpha\in\A}$, the map $\Sigma_\ast:\oplus_{\alpha\in\A}\genh_\ast^G(X_\alpha,A_\alpha)\rightarrow\genh_\ast^G(X,A)$ is an isomorophism.  Here $X\doteq\sum_{\alpha\in\A}X_\alpha$ and $A\doteq\sum_{\alpha\in\A}A_\alpha$, and $\Sigma_\ast$ is uniquely defined by the requirement that it coincides with $\genh_\ast^G(X_\alpha,A_\alpha)\rightarrow\genh_\ast^G(X,A)$ on summands $\genh_\ast^G(X_\alpha,A_\alpha)$.
\end{definition}

\begin{proposition}[Direct limits]\label{prop:limit}
Let $(X,A)$ be a $G$--pair, and let ${\cal C}$ denote the directed set of all compact $G$--pairs contained in $(X,A)$.  If $(\genh_\ast^G,\D_\ast)$ has compact supports, then the inclusion maps $(Y,B)\rightarrow(X,A)$ induce an isomorphism $\lim_{(Y,B)\in{\cal C}}\genh_\ast^G(Y,B)\cong\genh_\ast^G(X,A)$.
\end{proposition}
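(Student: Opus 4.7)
I would begin by verifying that ${\cal C}$ is a directed set under inclusion: the empty pair $(\emptyset,\emptyset)$ lies in ${\cal C}$, and given compact $(Y_1,B_1),(Y_2,B_2)\in{\cal C}$ the union $(Y_1\cup Y_2,B_1\cup B_2)$ is a compact $G$-pair in $(X,A)$. The equivariant inclusions $(Y,B)\to(X,A)$ are compatible with this ordering, so the universal property of direct limits supplies a natural $R$-linear map $\Phi_n:\lim_{(Y,B)\in{\cal C}}\genh_n^G(Y,B)\to\genh_n^G(X,A)$; the goal is to show each $\Phi_n$ is an isomorphism.

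My approach is to first settle the absolute case and then bootstrap to the relative case via the five lemma. For the absolute case, that is $\lim_{W_c}\genh_n^G(W_c)\to\genh_n^G(W)$ (with $W_c$ ranging over compact $G$-subspaces of a given $G$-space $W$), surjectivity is read off directly from the compact supports axiom applied to $(W,\emptyset)$. For injectivity, suppose $x_c\in\genh_n^G(W_c)$ is sent to $0$ in $\genh_n^G(W)$. Axiom \ref{ax:exact} applied to $(W,W_c)$ yields $x_c=\D y$ for some $y\in\genh_{n+1}^G(W,W_c)$, and compact supports applied to the pair $(W,W_c)$ produces a compact $G$-pair $(W'',W_c')\subseteq(W,W_c)$ and a class $y_c\in\genh_{n+1}^G(W'',W_c')$ mapping to $y$. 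Setting $W'\doteq W_c\cup W''$, which is a compact $G$-subspace of $W$ containing $W_c$, the naturality of $\D_\ast$ (Proposition \ref{prop:long-nat}) applied to the factorization $(W'',W_c')\to(W',W_c)\to(W,W_c)$ shows that $x_c$ equals the boundary of the image of $y_c$ in $\genh_{n+1}^G(W',W_c)$. Exactness of the long sequence of $(W',W_c)$ then forces $x_c\mapsto0$ in $\genh_n^G(W')$, so $[x_c]=0$ in the direct limit.

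For the relative case, I would use naturality (Proposition \ref{prop:long-nat}) together with exactness of the filtered direct limit functor on $R$-modules to build a commutative ladder whose top row is $\lim_{\cal C}$ applied to the long exact sequences of pairs $(Y,B)\in{\cal C}$ and whose bottom row is the long exact sequence of $(X,A)$. A cofinality argument identifies $\lim_{(Y,B)\in{\cal C}}\genh_n^G(Y)$ with $\lim_{Y_c\subseteq X\text{ compact}}\genh_n^G(Y_c)$ via the projection $(Y,B)\mapsto Y$, which admits a section $Y_c\mapsto(Y_c,\emptyset)$; analogously $\lim_{(Y,B)\in{\cal C}}\genh_n^G(B)\cong\lim_{B_c\subseteq A\text{ compact}}\genh_n^G(B_c)$ using $B_c\mapsto(B_c,B_c)$. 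The absolute case thus renders the four outer vertical arrows isomorphisms, and the five lemma promotes $\Phi_n$ to an isomorphism as well.

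The main obstacle is the injectivity argument in the absolute case, which requires careful naturality bookkeeping of $\D_\ast$ across several inclusions of pairs together with a judicious application of compact supports to an auxiliary relative group. Once that step is in hand, the reduction from the relative to the absolute case is an essentially formal diagram chase, and checking cofinality of the two projections ${\cal C}\to\text{Compact}(X)$ and ${\cal C}\to\text{Compact}(A)$ is straightforward.
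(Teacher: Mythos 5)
Your proof is correct, and it is a genuine self-contained argument where the paper itself simply defers to Spanier (Theorem~13, Chapter~4, Section~8), which establishes the isomorphism directly for pairs by a similar compact-supports-plus-triple-sequence mechanism. The core engine is identical — compact supports gives surjectivity; for injectivity one applies exactness, then compact supports to a relative group, then naturality of $\D_\ast$ to push the bounding class into a compact pair — but your organization differs: you first prove the absolute case $\lim_{W_c}\genh_\ast^G(W_c)\cong\genh_\ast^G(W)$, and then pass to the relative case by a cofinality identification and the five lemma applied to the ladder built from Proposition~\ref{prop:long-nat} and exactness of filtered colimits in $R$-Mod. Spanier instead works on pairs throughout, which avoids the cofinality step but requires juggling auxiliary pairs of the form $(Y\cup A,A)$. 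Both are valid; yours is perhaps cleaner conceptually, at the cost of the (routine) verification that the projections ${\cal C}\to\mathrm{Compact}(X)$ and ${\cal C}\to\mathrm{Compact}(A)$ are order-preserving maps of directed posets with cofinal image — a section suffices, but is more than is strictly needed. One small bookkeeping note in your injectivity argument: when you conclude that $x_c=\D_{n+1}(\tilde y)$, where $\tilde y$ is the image of $y_c$ in $\genh_{n+1}^G(W',W_c)$, the key observation is that the restriction of the inclusion $(W',W_c)\to(W,W_c)$ to the subspace pair is the identity on $W_c$, so the two naturality squares for $(W'',W_c')\to(W',W_c)$ and $(W'',W_c')\to(W,W_c)$ land on the same map $\genh_n^G(W_c'\to W_c)$; this is what makes the factorization argument close, and it is worth saying explicitly.
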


The converse of this statement is also true; the proofs of both statements are the same as that of theorem 13 of chapter 4, section 8, in \cite{Spanier}.

\begin{theorem}[Arbitrary disjoint unions]\label{thm:arbdisj}
If a generalized equivariant homology theory has compact supports, then it satisfies the arbitrary disjoint union property.
\end{theorem}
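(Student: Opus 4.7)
The plan is to combine the hypothesis of compact supports with the finite disjoint union property (Proposition \ref{prop:findisj}). The main geometric input I will use is that each $X_\alpha$ is clopen in $X=\sum_\alpha X_\alpha$, so that $\{X_\alpha\}_{\alpha\in\A}$ is a disjoint open cover of $X$. Consequently, any compact subset of $X$ meets only finitely many $X_\alpha$, and so every compact $G$--pair $(X_c,A_c)\subseteq(X,A)$ is contained in $(X_F,A_F)$ for some finite $F\subseteq\A$, where $X_F\doteq\sum_{\alpha\in F}X_\alpha$ and $A_F\doteq\sum_{\alpha\in F}A_\alpha$.

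For surjectivity of $\Sigma_\ast$, I will start with $x\in\genh_n^G(X,A)$ and invoke compact supports to produce a compact pair $(X_c,A_c)\subseteq(X,A)$ and an element $x_c\in\genh_n^G(X_c,A_c)$ mapping to $x$. I then choose finite $F\subseteq\A$ with $(X_c,A_c)\subseteq(X_F,A_F)$ and push $x_c$ forward to an element $y\in\genh_n^G(X_F,A_F)$. Applying Proposition \ref{prop:findisj} to the finite collection $\{(X_\alpha,A_\alpha)\}_{\alpha\in F}$, one may write $y=\Sigma^F_\ast(\sum_{\alpha\in F}y_\alpha)$ for certain $y_\alpha\in\genh_n^G(X_\alpha,A_\alpha)$. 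Functoriality of $\genh_\ast^G$ on the chain $(X_\alpha,A_\alpha)\hookrightarrow(X_F,A_F)\hookrightarrow(X,A)$ then gives $x=\Sigma_\ast(\sum_{\alpha\in F}y_\alpha)$, placing $x$ in the image of $\Sigma_\ast$.

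For injectivity, suppose $z=\sum_\alpha y_\alpha\in\oplus_{\alpha\in\A}\genh_n^G(X_\alpha,A_\alpha)$ has finite support $F$ and satisfies $\Sigma_\ast(z)=0$. I will decompose $(X,A)$ as the disjoint union of just two $G$--pairs, namely $(X_F,A_F)$ and $(X_{F^c},A_{F^c})$ with $F^c\doteq\A\setminus F$. Proposition \ref{prop:findisj} applied to this two-term union exhibits the inclusion-induced map $(j_F)_\ast:\genh_\ast^G(X_F,A_F)\rightarrow\genh_\ast^G(X,A)$ as the inclusion of a direct summand, and hence as injective. The same functoriality argument as in the previous paragraph yields $\Sigma_\ast(z)=(j_F)_\ast\bigl(\Sigma^F_\ast(\sum_{\alpha\in F}y_\alpha)\bigr)$, so injectivity of $(j_F)_\ast$ forces $\Sigma^F_\ast(\sum_{\alpha\in F}y_\alpha)=0$; since $\Sigma^F_\ast$ is itself an isomorphism, each $y_\alpha=0$.

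The argument is not conceptually difficult, and the only real obstacle is bookkeeping: one must verify that the various finite-union isomorphisms $\Sigma^F_\ast$ and the two-term isomorphism for $(X,A)=(X_F,A_F)+(X_{F^c},A_{F^c})$ are compatible with $\Sigma_\ast$, in the sense that the relevant diagrams commute on each summand $\genh_\ast^G(X_\alpha,A_\alpha)$. This will follow at once from the uniqueness clauses in the definitions of $\Sigma_\ast$ and $\Sigma^F_\ast$ together with functoriality on the chain $(X_\alpha,A_\alpha)\hookrightarrow(X_F,A_F)\hookrightarrow(X,A)$. The one place where the compact supports hypothesis is genuinely required is the reduction step in surjectivity; injectivity, by contrast, uses only the $n=2$ case of Proposition \ref{prop:findisj}.
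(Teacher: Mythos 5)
Your proof is correct, and it takes a genuinely different route from the paper's. The paper proves the statement in one stroke by passing to direct limits: for each compact $G$--pair $(Y,B)\subseteq(X,A)$ the intersections $Y\cap X_\alpha$ are nonempty for only finitely many $\alpha$, so Proposition~\ref{prop:findisj} gives an isomorphism $\oplus_\alpha\genh_\ast^G(Y\cap X_\alpha,B\cap A_\alpha)\to\genh_\ast^G(Y,B)$; one then takes the direct limit over the directed set $\mathcal{C}$ of compact pairs, invoking Proposition~\ref{prop:limit} on each side (and the fact that direct sums commute with direct limits) to identify the limit of the two columns in the paper's square with $\oplus_\alpha\genh_\ast^G(X_\alpha,A_\alpha)$ and $\genh_\ast^G(X,A)$ respectively. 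You instead split the claim into surjectivity and injectivity and chase elements. The underlying geometric fact you use (a compact subset of a disjoint union meets only finitely many summands) is the same one driving the paper's finiteness observation, and both arguments ultimately rest on Proposition~\ref{prop:findisj} plus compact supports via Proposition~\ref{prop:limit} (in the paper) or directly (in yours). What your argument buys is a cleaner separation of concerns: your injectivity step shows that any map $\Sigma_\ast$ with the stated universal property is automatically a monomorphism whenever the two-term case of Proposition~\ref{prop:findisj} holds, with no compact supports hypothesis at all --- compact supports is used only for surjectivity. This observation is invisible in the direct-limit argument, which treats both halves uniformly. What the paper's approach buys is brevity and a formulation that generalizes readily to other colimit-style statements, at the cost of appealing to a standard but nontrivial exchange-of-limits lemma (the paper cites \cite{ES} for this). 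Both are complete once the small bookkeeping about compatibility of $\Sigma_\ast$ with $\Sigma^F_\ast$ and $(j_F)_\ast$ is observed, which you correctly note follows from the uniqueness clause defining $\Sigma_\ast$ and functoriality of $\genh_\ast^G$ on the inclusions $(X_\alpha,A_\alpha)\hookrightarrow(X_F,A_F)\hookrightarrow(X,A)$.
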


\begin{proof}
For any compact pair $(Y,B)\subseteq(X,A)$, $Y=\sum_{\alpha\in\mathcal{A}}(Y\cap X_\alpha)$ and $A=\sum_{\alpha\in\mathcal{A}}(B\cap A_\alpha)$ are necessarily finite sums.  Thus in the commutative diagram:
$$\begin{CD}
  \oplus_{\alpha\in\mathcal{A}}
      \genh_\ast^G(Y\cap X_\alpha,B\cap A_\alpha)
    @>>>
      \genh_\ast^G(Y,B)\\
  @VVV
    @VVV\\
  \oplus_{\alpha\in\mathcal{A}}\genh_\ast^G(X_\alpha,A_\alpha)
    @>>>
      \genh_\ast^G(X,A),
\end{CD}$$
the top horizontal arrow is an isomorphism by proposition \ref{prop:findisj}.  The theorem now follows from propostition \ref{prop:limit} by taking direct limits over compact pairs (see theorem 4.13 of chapter VII in \cite{ES}).
\end{proof}

\section{Reduced homology}
\label{sec:redhom}

Analogous with the non--equivariant case, a {\em pointed $G$--space} is a pair $(X,x_0)$ where $X$ is a $G$--space and $x_0\in X$ is a distinguished point (the {\em base point}).  A {\em morphism} $f:(X,x_0)\rightarrow(Y,y_0)$ of pointed $G$--spaces is an equivariant map that preserves base points: $f(x_0)=y_0$.  We do not assume that the base point is fixed by the $G$--action; that is, a morphism $f$ of $G$--pairs induces an equivariant map of orbits $f:Gx_0\rightarrow Gy_0$, and hence an equivariant map of $G$--pairs $f:(X,Gx_0)\rightarrow(Y,Gy_0)$.

\begin{definition}\label{def:rhomology}
The {\bf reduced homology} of a pointed $G$--space $(X,x_0)$ is defined to be $\rgenh_\ast^G(X,x_0)\doteq\genh_\ast^G(X,Gx_0)$.  If $f:(X,x_0)\rightarrow(Y,y_0)$ is a morphism of pointed $G$--spaces, then $\rgenh_\ast^G(f):\rgenh_\ast^G(X,x_0)\rightarrow\rgenh_\ast^G(Y,y_0)$ is defined to be the homomorphism $\genh_\ast^G(f):\genh_\ast^G(X,Gx_0)\rightarrow\genh_\ast^G(Y,Gy_0)$.
\end{definition}

The following properties of reduced homology are immediate from the corresponding properties in non--reduced homology.

\begin{theorem}
\label{thm:rfunctor}
If $f:(X,x_0)\rightarrow(Y,y_0)$ and $g:(Y,y_0)\rightarrow(Z,z_0)$ are morphisms of pointed $G$--spaces, then $\rgenh_\ast^G(g\circ f)=\rgenh_\ast^G(g)\circ\rgenh_\ast^G(f)$.
\hfill\qed
\end{theorem}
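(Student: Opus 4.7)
The plan is to show this is essentially a direct unwinding of the definitions together with the functoriality of the unreduced theory $\genh_\ast^G$.

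First I would verify that a morphism $f:(X,x_0)\rightarrow(Y,y_0)$ of pointed $G$--spaces does indeed restrict to a map of $G$--pairs $f:(X,Gx_0)\rightarrow(Y,Gy_0)$. This uses two facts: $f$ is equivariant and $f(x_0)=y_0$. Together these give $f(Gx_0)=Gf(x_0)=Gy_0$, so the image of the orbit $Gx_0$ lies in $Gy_0$ (in fact equals it, but containment is all that is needed). This step is already observed in the paragraph immediately preceding Definition~\ref{def:rhomology}. The same reasoning applies to $g:(Y,Gy_0)\rightarrow(Z,Gz_0)$ and to the composition $g\circ f:(X,Gx_0)\rightarrow(Z,Gz_0)$, which is a map of $G$--pairs since composition of equivariant base--point preserving maps is equivariant and base--point preserving.

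Next I would invoke the functoriality of $\genh_\ast^G$ (which is assumed by hypothesis, since $\genh_\ast^G$ is a covariant functor on the category of $G$--pairs and equivariant maps) applied to the composable arrows $f:(X,Gx_0)\rightarrow(Y,Gy_0)$ and $g:(Y,Gy_0)\rightarrow(Z,Gz_0)$, to conclude
\[
\genh_\ast^G(g\circ f)=\genh_\ast^G(g)\circ\genh_\ast^G(f)
\]
as homomorphisms $\genh_\ast^G(X,Gx_0)\rightarrow\genh_\ast^G(Z,Gz_0)$. Finally I would translate the three terms in this equation back into reduced notation using Definition~\ref{def:rhomology}: the left hand side is by definition $\rgenh_\ast^G(g\circ f)$, and the right hand side is $\rgenh_\ast^G(g)\circ\rgenh_\ast^G(f)$.

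There is really no main obstacle here; the only content is the observation that the assignment $(X,x_0)\mapsto(X,Gx_0)$ and $f\mapsto f$ defines a functor from the category of pointed $G$--spaces to the category of $G$--pairs, after which the result follows from functoriality of $\genh_\ast^G$. The subtlety worth flagging is that we must use orbits $Gx_0$, $Gy_0$ rather than the base points themselves, since base points are not required to be $G$--fixed; this is precisely what forces the definition of $\rgenh_\ast^G$ to pass through orbits and makes the functoriality check above well posed.
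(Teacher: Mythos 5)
Your proposal is correct and matches the paper's intent: the paper marks this theorem as immediate from the corresponding properties of the non--reduced theory, and your argument is exactly the expected unwinding, namely that $(X,x_0)\mapsto(X,Gx_0)$ is functorial (using equivariance plus base--point preservation to get $f(Gx_0)\subseteq Gy_0$) and then functoriality of $\genh_\ast^G$ does the rest.
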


\begin{theorem}
\label{thm:rexact}
If $(X,A)$ is a $G$--pair with $A\neq\emptyset$, then for any $a_0\in A$, there is a long exact sequence
$$\rgenh_n^G(A,a_0)
  \rightarrow
  \rgenh_n^G(X,a_0)
  \rightarrow
  \genh_n^G(X,A)
  \xrightarrow{\rD_n}
  \rgenh_{n-1}^G(A,a_0)
$$
(all maps induced by the corresponding maps in nonreduced homology).
\hfill\qed
\end{theorem}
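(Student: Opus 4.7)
The plan is to derive this exact sequence as an immediate specialization of the triple exact sequence (Proposition \ref{prop:triple}) applied to the triple $(X, A, Ga_0)$. First I would verify that this is a bona fide triple of $G$--spaces: the hypothesis $a_0\in A$ combined with the fact that $A$ is $G$--invariant (being a $G$--subspace of $X$) gives the containment $Ga_0\subseteq A\subseteq X$, so Proposition \ref{prop:triple} directly applies.

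Next I would unpack the resulting sequence. Proposition \ref{prop:triple} yields the long exact sequence
$$\genh_n^G(A, Ga_0)
  \rightarrow
  \genh_n^G(X, Ga_0)
  \rightarrow
  \genh_n^G(X, A)
  \xrightarrow{\D_n}
  \genh_{n-1}^G(A, Ga_0),
$$
and by Definition \ref{def:rhomology} the first two groups are precisely $\rgenh_n^G(A, a_0)$ and $\rgenh_n^G(X, a_0)$, while the last group is $\rgenh_{n-1}^G(A, a_0)$. The connecting homomorphism in Proposition \ref{prop:triple} is defined as $\D_n$ followed by $\genh_{n-1}^G(A)\rightarrow\genh_{n-1}^G(A, Ga_0)$, and this is exactly the composition we wish to call $\rD_n$. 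To justify the parenthetical remark in the statement, I would observe that the first two arrows are $\genh_\ast^G$ applied to the inclusions $(A, Ga_0)\rightarrow(X, Ga_0)$ and $(X, Ga_0)\rightarrow(X, A)$, which by our notational conventions are the induced maps on reduced homology of the underlying morphisms $A\rightarrow X$ and $X\rightarrow(X,A)$ of pointed $G$--spaces and pairs.

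There is really no obstacle here; the entire argument is a translation between Definition \ref{def:rhomology} and Proposition \ref{prop:triple}. The only point worth emphasizing, which is the whole reason reduced homology is defined via the orbit $Ga_0$ rather than the single point $\{a_0\}$, is that $a_0$ need not be $G$--fixed. Using the orbit guarantees $G$--invariance, which is what makes $(X,A,Ga_0)$ a triple in the equivariant category and thus permits the direct invocation of Proposition \ref{prop:triple}.
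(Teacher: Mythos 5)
Your argument is correct and is exactly the intended one: the paper flags this theorem as ``immediate from the corresponding properties in non--reduced homology,'' and the corresponding property is Proposition \ref{prop:triple} applied to the triple $(X,A,Ga_0)$, which is precisely what you do. Your remark about why the orbit $Ga_0$ rather than $\{a_0\}$ is needed for $G$--invariance is a helpful clarification, and your identification of $\rD_n$ with the composite $\D_n$ followed by $\genh_{n-1}^G(A)\rightarrow\genh_{n-1}^G(A,Ga_0)$ matches the description of $\D_\ast$ given in Proposition \ref{prop:triple}.
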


\begin{theorem}
\label{thm:redhom-nat}
If $f:(X,A)\rightarrow(Y,B)$ is an equivariant map of $G$--pairs and $a_0\in A,b_0\in B$ such that $f(a_0)=b_0$, then $f$ induces a homomorphism of long exact sequences
$$\begin{CD}
  \rgenh_n^G(A,a_0)
    @>>>
      \rgenh_n^G(X,a_0)
        @>>>
          \genh_n^G(X,A)
            @>{\rD_n}>>
              \rgenh_{n-1}^G(A,a_0)\\
  @V{\rgenh_n^G(f)}VV
    @V{\rgenh_n^G(f)}VV
      @V{\genh_n^G(f)}VV
        @V{\rgenh_{n-1}^G(f)}VV\\
  \rgenh_n^G(B,b_0)
    @>>>
      \rgenh_n^G(Y,b_0)
        @>>>
          \genh_n^G(Y,B)
            @>{\rD_n}>>
              \rgenh_{n-1}^G(B,b_0).\hfill\qed
\end{CD}$$
\end{theorem}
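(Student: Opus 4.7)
The plan is to reduce the claim to the naturality of the long exact sequence of a triple (Proposition \ref{prop:long-nat}), by realizing both rows of the asserted diagram as instances of the triple long exact sequence for appropriately chosen triples.

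First, I would observe that since $f$ is equivariant with $f(a_0)=b_0$, one has $f(ga_0)=gf(a_0)=gb_0$ for every $g\in G$, so $f(Ga_0)\subseteq Gb_0$. Together with $f(A)\subseteq B$, this means that $f$ restricts to an equivariant map of $G$--triples $f:(X,A,Ga_0)\rightarrow(Y,B,Gb_0)$.

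Next, by Definition \ref{def:rhomology}, the long exact sequence of the triple $(X,A,Ga_0)$ given by Proposition \ref{prop:triple},
$$\genh_n^G(A,Ga_0)\rightarrow\genh_n^G(X,Ga_0)\rightarrow\genh_n^G(X,A)\xrightarrow{\D_n}\genh_{n-1}^G(A,Ga_0),$$
agrees term by term with the top row of the diagram after rewriting $\genh_n^G(-,Ga_0)$ as $\rgenh_n^G(-,a_0)$; and the corresponding sequence for $(Y,B,Gb_0)$ agrees with the bottom row. Likewise, by Definition \ref{def:rhomology}, each vertical map $\rgenh_n^G(f)$ is by definition the map $\genh_n^G(f)$ applied to the appropriate triple--homology group, so the vertical arrows match those produced by the map of triples.

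Finally, applying Proposition \ref{prop:long-nat} to the map of triples $f:(X,A,Ga_0)\rightarrow(Y,B,Gb_0)$ yields a homomorphism of long exact sequences; under the identifications above, this is exactly the diagram claimed. There is no real obstacle: the entire argument is an unpacking of definitions, with the only nontrivial observation being that equivariance forces $f(Ga_0)\subseteq Gb_0$ so that Proposition \ref{prop:long-nat} is applicable.
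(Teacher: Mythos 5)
Your proof is correct and is exactly the unpacking the paper has in mind when it declares this result ``immediate from the corresponding properties in non--reduced homology'': one observes that equivariance and $f(a_0)=b_0$ force $f(Ga_0)\subseteq Gb_0$, so $f$ is a map of triples $(X,A,Ga_0)\rightarrow(Y,B,Gb_0)$, and then Proposition~\ref{prop:long-nat} applied to that triple map, rewritten via Definition~\ref{def:rhomology}, gives the stated diagram.
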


A {\em $G$--homotopy} of pointed $G$--spaces $(X,x_0)$ and $(Y,y_0)$ is an equivariant map $F:(X,Gx_0)\times[0,1]\rightarrow(Y,Gy_0)$ of $G$--pairs (where $[0,1]$ is given the trivial $G$--action) such that $F(x_0,t)=y_0$ for all $t\in[0,1]$.

\begin{theorem}
\label{thm:rhomotopy}
If $f,g:(X,x_0)\rightarrow(Y,y_0)$ are $G$--homotopic maps of pointed $G$--spaces, then $\rgenh_\ast^G(f)=\rgenh_\ast^G(g)$.
\hfill\qed
\end{theorem}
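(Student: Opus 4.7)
The strategy is to reduce directly to the homotopy axiom (Axiom \ref{ax:homotopy}) applied to the non-reduced homology of the pairs $(X,Gx_0)$ and $(Y,Gy_0)$, since by Definition \ref{def:rhomology} the induced maps on reduced homology are literally the maps on non-reduced homology of these pairs.

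The first step is to observe that the base-point-preserving equivariant maps $f,g:(X,x_0)\to(Y,y_0)$ restrict to equivariant maps of $G$-pairs $f,g:(X,Gx_0)\to(Y,Gy_0)$: if $h\in G$, then $f(hx_0)=hf(x_0)=hy_0\in Gy_0$, and similarly for $g$, so orbits are sent to orbits. Thus $\rgenh_\ast^G(f)$ and $\rgenh_\ast^G(g)$ are, by definition, $\genh_\ast^G(f)$ and $\genh_\ast^G(g)$ as maps $\genh_\ast^G(X,Gx_0)\to\genh_\ast^G(Y,Gy_0)$.

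Next, I would verify that the data of a $G$-homotopy $F$ of pointed $G$-spaces is exactly the data needed to invoke Axiom \ref{ax:homotopy} for the pairs $(X,Gx_0)$ and $(Y,Gy_0)$. By definition, $F:(X,Gx_0)\times[0,1]\to(Y,Gy_0)$ is already an equivariant map of $G$-pairs (with $[0,1]$ carrying the trivial action); the extra condition $F(x_0,t)=y_0$ combined with equivariance gives $F(hx_0,t)=hy_0\in Gy_0$ for every $h\in G$ and $t\in[0,1]$, which is precisely the statement that $F$ maps $Gx_0\times[0,1]$ into $Gy_0$. Moreover, $F(\cdot,0)=f$ and $F(\cdot,1)=g$ as maps of pairs.

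Applying Axiom \ref{ax:homotopy} to this homotopy of pairs yields $\genh_\ast^G(f)=\genh_\ast^G(g):\genh_\ast^G(X,Gx_0)\to\genh_\ast^G(Y,Gy_0)$, which by Definition \ref{def:rhomology} is the desired equality $\rgenh_\ast^G(f)=\rgenh_\ast^G(g)$. There is no real obstacle here: the definitions of reduced homology and of $G$-homotopy of pointed $G$-spaces have been arranged precisely so that the statement becomes an immediate consequence of the non-reduced homotopy axiom.
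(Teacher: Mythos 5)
Your proof is correct and follows exactly the route the paper intends: the paper marks this result as immediate (with only a \qed), noting that it follows directly from the corresponding non-reduced property, and your argument simply unwinds Definition~\ref{def:rhomology} and the definition of a $G$--homotopy of pointed $G$--spaces to invoke Axiom~\ref{ax:homotopy} on the pairs $(X,Gx_0)$ and $(Y,Gy_0)$.
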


In nonequivariant homology, one has the luxury of equating reduced homology with the kernel of the homology functor applied to the augmentation map.  In the equivariant case, the augmentation map is required to be equivariant, which places a severe restriction on the spaces involved.  That is, if $X$ is a $G$--space and $x_0\in X$, then equivariance of the constant map $\epsilon:X\rightarrow\{x_0\}$ requires that $x_0$ be fixed by the $G$--action.  However, if we are willing to restrict our consideration to such spaces, we will still have the equality $\rgenh_\ast^G(X,x_0)=\ker\genh_\ast^G(\epsilon)$, as we now demonstrate in a somewhat more general context.

Suppose that $Y$ is a $H$--space, where $H\leq G$.  Recall (as in \cite{Bredon2}, for instance) that the {\em balanced product} $G\times_HY$ is the quotient of the Cartesian product $G\times Y$ by the $H$--action $h\cdot(g,y)\doteq(gh,h^{-1}y)$; the equivalence class of the point $(g,y)$ is denoted $[g,y]$, and there is a well--defined $G$--action given by $g^\prime\cdot[g,y]=[g^\prime g,y]$.  Moreover, given an equivariant map of $H$--spaces $f:X\rightarrow Y$, we have an induced equivariant map of $G$--spaces $G\times_Hf:G\times_HX\rightarrow G\times_HY$ given by $(G\times_Hf)[g,x]\doteq[g,f(x)]$.

Under suitable hypotheses on $X$ and $G$, it can be show that the existence of an equivariant map $\chi:X\rightarrow Gx_0$ implies that $X$ can be identified with a balanced product $G\times_HY$ for some $H\leq G$ and $H$--space $Y$, and the point $x_0$ corresponds under this identification to a point $[e,y_0]$ with $y_0\in Y$ having trivial $H$--action.  For instance, in the case when $X$ is Hausdorff and $G$ is compact, see propositions 4.1 of chapter I and 3.2 of chapter II in \cite{Bredon2}.

\begin{theorem}
\label{thm:redhombp}
Suppose $H\leq G$, and $(Y,y_0)$ is a pointed $H$--space with $y_0$ having trivial $H$--action.  If $\epsilon_{(Y,y_0)}:Y\rightarrow\{y_0\}$ is the constant map, then
$$\ker\genh_\ast^G(G\times_H\epsilon_{(Y,y_0)})
  \rightarrow
  \rgenh_\ast^G(G\times_HY,[e,y_0])
$$
is an isomorphism.  Moreover, if $f:(Y,y_0)\rightarrow(Z,z_0)$ is a morphism of pointed $H$--spaces, then the following diagram commutes:
$$\begin{CD}
  \ker\genh_\ast^G(G\times_H\epsilon_{(Y,y_0)})
    @>>>\rgenh_\ast^G(G\times_HY,[e,y_0])\\
  @V{\genh_\ast^G(G\times_Hf)}VV
    @VV{\rgenh_\ast^G(G\times_Hf)}V\\
  \ker\genh_\ast^G(G\times_H\epsilon_{(Z,z_0)})
    @>>>
      \rgenh_\ast^G(G\times_HZ,[e,z_0]).
\end{CD}$$
\end{theorem}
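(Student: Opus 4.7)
The plan is to realize $r := G \times_H \epsilon_{(Y,y_0)}$ as an equivariant retraction and then read off the isomorphism from the long exact sequence of the associated pair. The first step is to identify the codomain $G \times_H \{y_0\}$ with $A := G[e,y_0] \subseteq X := G \times_H Y$. Because $y_0$ has trivial $H$--action, $[g,y_0] = [gh,y_0]$ for all $g \in G$ and $h \in H$, and the inclusion $\{y_0\} \hookrightarrow Y$ therefore induces a $G$--equivariant homeomorphism $G \times_H \{y_0\} \cong A$. Under this identification $r$ becomes $[g,y] \mapsto [g,y_0]$, and composing with the inclusion $\iota : A \to X$ gives $r \circ \iota = \mathrm{id}_A$.

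Applying $\genh_\ast^G$ and using functoriality, $\genh_\ast^G(\iota)$ is split injective with left inverse $\genh_\ast^G(r)$. In the long exact sequence of $(X,A)$ from Axiom \ref{ax:exact}, the boundary $\D_n$ must vanish because its image lies in $\ker \genh_\ast^G(\iota) = 0$, so the sequence degenerates to the short exact sequence
$$0 \to \genh_n^G(A) \xrightarrow{\genh_\ast^G(\iota)} \genh_n^G(X) \to \genh_n^G(X,A) \to 0,$$
which splits via $\genh_\ast^G(r)$. Consequently $\genh_n^G(X) = \im \genh_\ast^G(\iota) \oplus \ker \genh_\ast^G(r)$, and the natural projection $\genh_n^G(X) \to \genh_n^G(X,A) = \rgenh_n^G(X,[e,y_0])$ restricts to the desired isomorphism on $\ker \genh_\ast^G(r)$.

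For naturality, a morphism $f : (Y,y_0) \to (Z,z_0)$ of pointed $H$--spaces yields the equivariant map $G \times_H f$ that carries $X$ into $X' := G \times_H Z$ and $A$ into $A' := G[e,z_0]$, intertwining the respective retractions and inclusions. Every map appearing in the preceding argument is natural in $(Y,y_0)$, so Proposition \ref{prop:long-nat} applied to the induced map of pairs $(X,A) \to (X',A')$ ensures that the direct sum decompositions and the projection onto relative homology commute with $\genh_\ast^G(G \times_H f)$, giving commutativity of the stated square. The only technical subtlety I anticipate is confirming that the quotient topology on $G \times_H \{y_0\}$ matches the subspace topology from $X$ so that $A$ is genuinely a $G$--subspace onto which $r$ retracts; once this is in place, the remainder is a routine diagram chase.
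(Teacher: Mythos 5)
Your proof is correct and reaches the same place as the paper's, but through slightly different bookkeeping. Both arguments hinge on the same observation: the inclusion $i:Gx_0\to X$ of the orbit $A=Gx_0=G[e,y_0]$ satisfies $\pi\circ i=\mathrm{id}$ where $\pi:=G\times_H\epsilon_{(Y,y_0)}$ (viewed as a retraction of $X$ onto $A$), and one then feeds this into the long exact sequence of the pair $(X,A)$. Where you differ is in how you extract the isomorphism from that data. You observe that $\genh_\ast^G(i)$ split injective forces the boundary maps $\D_n$ to vanish, turning the long exact sequence into split short exact sequences, whence $\genh_n^G(X)=\im\genh_\ast^G(i)\oplus\ker\genh_\ast^G(\pi)$, with the quotient projection restricting to an isomorphism on the second summand. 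The paper instead compares the long exact sequence of $(X,Gx_0)$ with that of $(Gx_0,Gx_0)$ along the epimorphism induced by $\pi$, and cites the Eilenberg--Steenrod lemma (chapter I, lemma 8.8) that the degree-wise kernel of a surjection of long exact sequences is itself exact; the resulting exact sequence collapses immediately to $0\to\ker\genh_n^G(\pi)\to\genh_n^G(X,Gx_0)\to 0$. Both routes are standard and of roughly equal length; yours is perhaps more elementary and self-contained, while the paper's off-loads the diagram chasing onto a quoted lemma. Your caution about the identification $G\times_H\{y_0\}\cong Gx_0$ is warranted: the paper silently treats $\pi$ as a retraction $X\to Gx_0$, and one should check that the continuous $G$-bijection $G\times_H\{y_0\}\to Gx_0$ (induced by $\{y_0\}\hookrightarrow Y$, using triviality of the $H$-action on $y_0$) is a homeomorphism onto the subspace $Gx_0\subseteq X$ so that the two possible domains of $\ker\genh_\ast^G(\pi)$ agree; this is the one place where some hypothesis on $G$ and $Y$ is quietly in play. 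Your naturality argument matches the paper's in substance: the key point in both is that $\pi'\circ(G\times_Hf)=(G\times_Hf)\circ\pi$ (from $\epsilon_{(Z,z_0)}\circ f=f\circ\epsilon_{(Y,y_0)}$), so $\genh_\ast^G(G\times_Hf)$ carries $\ker\genh_\ast^G(\pi)$ into $\ker\genh_\ast^G(\pi')$, and Proposition \ref{prop:long-nat} handles the rest.
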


\begin{proof}
Set $X\doteq G\times_HY$ and $x_0\doteq[e,y_0]$, and $\pi\doteq G\times_H\epsilon_{(Y,y_0)}$.  The inclusion $i:Gx_0\rightarrow X$ is such that $\pi\circ i={\it id}$.  Therefore, we have an epimorphism of long exact sequences:
$$\begin{CD}
\genh_n^G(Gx_0)
  @>>>
    \genh_n^G(X)
      @>>>
        \genh_n^G(X,Gx_0)
          @>{\D_n}>>
            \genh_{n-1}^G(Gx_0)\\
@V{\genh_n^G(\pi)}VV
  @V{\genh_n^G(\pi)}VV
    @V{\genh_n^G(\pi)}VV
      @V{\genh_{n-1}^G(\pi)}VV\\
\genh_n^G(Gx_0)
  @>>>
    \genh_n^G(Gx_0)
      @>>>
        \genh_n^G(Gx_0,Gx_0)
          @>{\D_n}>>
            \genh_{n-1}^G(Gx_0).
\end{CD}$$
By lemma 8.8 of chapter I in \cite{ES}, the kernel of this sequence is also a long exact sequence.  However, the kernel of the vertical arrow on the each end is trivial, since each map is an isomorphism; the kernel of the vertical arrow second from the left is $\ker\genh_\ast^G(\pi)$, and that of the third arrow from the left is $\genh_\ast^G(X,Gx_0)=\rgenh_\ast^G(X,x_0)$, since $\genh_\ast^G(Gx_0,Gx_0)=0$.

For the second statement of the theorem, we only need to show that $\ker\genh_\ast^G(G\times_Hf)$ maps $\ker\genh_\ast^G(\pi)$ to $\ker\genh_\ast^G(\pi')$, where $\pi'\doteq G\times_H\epsilon_{(Z,z_0)}$.  However, since $f(y_0)=z_0$, we have that $\epsilon_{(Z,z_0)}\circ f=f\circ\epsilon_{(Y,y_0)}$.  Consequently, $\genh_\ast^G(\pi')\circ\genh_\ast^G(G\times_Hf)=\genh_\ast^G(G\times_Hf)\circ\genh_\ast^G(\pi)$.  The theorem follows.
\end{proof}

\section{Associated homology of a filtration}

Let us assume that there exists a {\em $G$--filtration} $\{X_k\}_{k\in\Z}$ of the $G$--pair $(X,A)$; that is, $X=\cup_{k\in\Z}X_k$, where for each $k\in\Z$, $X_k$ is a $G$--subspace of $X$ with $X_k\subseteq X_{k+1}$, and $X_k=A$ if $k<0$.

\begin{lemma}
If $\{X_k\}_{k\in\Z}$ is a $G$--filtration of $(X,A)$, then the following composition is trivial:
$$\genh_n^G(X_p,X_{p-1})
  \xrightarrow{\D_n}
  \genh_{n-1}^G(X_{p-1},X_{p-2})
  \xrightarrow{\D_{n-1}}
  \genh_{n-2}^G(X_{p-2},X_{p-3}).
$$
\end{lemma}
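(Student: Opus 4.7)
The plan is to exploit Proposition \ref{prop:triple} to factor each triple boundary through the corresponding absolute homology group, and then observe that the middle two arrows in the resulting five-term composition form part of a long exact sequence of a pair, hence compose to zero.

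More concretely, recall from Proposition \ref{prop:triple} that the boundary map $\D_n\colon\genh_n^G(X_p,X_{p-1})\to\genh_{n-1}^G(X_{p-1},X_{p-2})$ for the triple $(X_p,X_{p-1},X_{p-2})$ is defined as the composition
$$\genh_n^G(X_p,X_{p-1})
   \xrightarrow{\D_n}
   \genh_{n-1}^G(X_{p-1})
   \rightarrow
   \genh_{n-1}^G(X_{p-1},X_{p-2}),$$
where the first arrow is the boundary of the pair $(X_p,X_{p-1})$ and the second is induced by inclusion. Likewise, the map $\D_{n-1}\colon\genh_{n-1}^G(X_{p-1},X_{p-2})\to\genh_{n-2}^G(X_{p-2},X_{p-3})$ factors as the pair boundary into $\genh_{n-2}^G(X_{p-2})$ followed by inclusion into $\genh_{n-2}^G(X_{p-2},X_{p-3})$. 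Splicing these factorizations together, the composition in the statement becomes a five-term chain in which the two middle arrows are
$$\genh_{n-1}^G(X_{p-1})\rightarrow\genh_{n-1}^G(X_{p-1},X_{p-2})\xrightarrow{\D_{n-1}}\genh_{n-2}^G(X_{p-2}),$$
that is, two consecutive maps in the long exact sequence (Axiom \ref{ax:exact}) of the pair $(X_{p-1},X_{p-2})$. Their composition is zero by exactness, so the entire five-term composition vanishes.

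Since the argument is purely a matter of unwinding definitions and invoking exactness, I do not anticipate any real obstacle. The only point that deserves attention is the initial reformulation: one must be careful to distinguish the symbol $\D_\ast$ as it appears for triples (Proposition \ref{prop:triple}) from its meaning for pairs (Axiom \ref{ax:exact}), since the lemma is phrased in terms of the former while the vanishing is immediate from the latter.
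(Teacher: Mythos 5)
Your proof is correct and is exactly the standard argument the paper refers to via Eilenberg--Steenrod (Chapter III, Theorem 6.2): expand each triple boundary using Proposition \ref{prop:triple} and note that the two inner maps are consecutive terms of the long exact sequence of the pair $(X_{p-1},X_{p-2})$, hence compose to zero.
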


The proof of this lemma is the usual one, see the proof of theorem 6.2 of chapter III of \cite{ES}.  We may thus make the following definition.

\begin{definition}\label{def:asshom}
Let $\{X_k\}_{k\in\Z}$ be a $G$--filtration of the $G$--pair
$(X,A)$.  For each integer $q$, define
\begin{equation}\label{eq:filt-chains}
  \A_qC_p^G(X,A)
  \doteq\genh_{p+q}^G(X_p,X_{p-1}).
\end{equation}
In addition, define
\begin{equation}\label{eq:filt-boundary}
  \A_q\D_p:\A_qC_p^G(X,A)
  \rightarrow\A_qC_{p-1}^G(X,A)
\end{equation}
to be equal to the boundary map of the triple $(X_p,X_{p-1},X_{p-2})$.  The {\bf $q$--th associated equivariant homology of the filtration of $(X,A)$} is then defined to be the homology
\begin{equation}\label{eq:filt-homology}
  \A_q\genh_\ast^G(X,A)
    \doteq H\bigl(\A_qC_\ast^G(X,A),\A_q\D_\ast\bigr).
\end{equation}
 of the chain complex $\bigl(\A_qC_\ast^G(X,A),\A_q\D_\ast\bigr)$.
\end{definition}

Later we will give more explicit formulae for the cells and boundary maps in the case where the filtration is by the skeleta of a simplicial $G$--complex.

\section{Homology spectral sequence}
\label{sec:spectral}

For this section, suppose that we are given a $G$--filtration $\{X_k\}_{k\in\Z}$ of the $G$--pair $(X,A)$.  We define an exact couple $D=D_{\ast,\ast}$ and $E=E_{\ast,\ast}$ of bigraded $R$--modules as follows.  For all $p,q\in\Z$, we set
\begin{equation}\label{eq:genDEdef}
  D_{p,q}\doteq\genh_{p+q}^G(X_p,A)
  \quad\text{and}\quad
  E_{p,q}\doteq\genh_{p+q}^G(X_p,X_{p-1}).
\end{equation}
We also define $R$--module homomorphisms
\begin{equation}\label{eq:genDEmaps}
  i:D_{p,q}\rightarrow D_{p+1,q-1},\quad\quad
  j:D_{p,q}\rightarrow E_{p,q},\quad\quad
  k:E_{p,q}\rightarrow D_{p-1,q},
\end{equation}
where $i$ is induced by $(X_p,A)\rightarrow(X_{p+1},A)$, $j$ by $(X_p,A)\rightarrow(X_p,X_{p-1})$, and $k$ is the boundary map of the triple $(X_p,X_{p-1},A)$.  That $D$ and $E$ form an exact couple is simply a restatement of proposition \ref{prop:triple}.

\begin{lemma}\label{lem:d}
The differential $d\doteq j\circ k$ of the exact couple defined by \eqref{eq:genDEdef} and \eqref{eq:genDEmaps} is equal to the boundary map in the exact sequence of the triple $(X_p,X_{p-1},X_{p-2})$:
$$d:E_{p,q}=\genh_{p+q}^G(X_p,X_{p-1})
      \xrightarrow{\D_{p+q}}
      \genh_{p+q-1}^G(X_{p-1},X_{p-2})=E_{p-1,q}.
$$
\end{lemma}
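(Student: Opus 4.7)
The plan is to unwind the definitions of $j$ and $k$ via Proposition \ref{prop:triple} and then collapse the resulting composition of inclusion-induced maps using functoriality.

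By construction, $k:E_{p,q}\to D_{p-1,q}$ is the boundary map of the triple $(X_p,X_{p-1},A)$, which by Proposition \ref{prop:triple} factors as
$$\genh_{p+q}^G(X_p,X_{p-1})
    \xrightarrow{\D_{p+q}}
      \genh_{p+q-1}^G(X_{p-1})
        \rightarrow
          \genh_{p+q-1}^G(X_{p-1},A),$$
where the second arrow is induced by the inclusion $(X_{p-1},\emptyset)\to(X_{p-1},A)$. The map $j:D_{p-1,q}\to E_{p-1,q}$ is in turn induced by the inclusion $(X_{p-1},A)\to(X_{p-1},X_{p-2})$. Thus $d=j\circ k$ is the four-term composition
$$\genh_{p+q}^G(X_p,X_{p-1})
    \xrightarrow{\D_{p+q}}
      \genh_{p+q-1}^G(X_{p-1})
        \rightarrow
          \genh_{p+q-1}^G(X_{p-1},A)
            \rightarrow
              \genh_{p+q-1}^G(X_{p-1},X_{p-2}).$$

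The last two arrows are both induced by inclusion maps of $G$--pairs, and their composition $(X_{p-1},\emptyset)\to(X_{p-1},A)\to(X_{p-1},X_{p-2})$ is itself the inclusion $(X_{p-1},\emptyset)\to(X_{p-1},X_{p-2})$. Applying the functoriality of $\genh_\ast^G$, the composition of the induced maps equals the single map $\genh_{p+q-1}^G(X_{p-1})\to\genh_{p+q-1}^G(X_{p-1},X_{p-2})$ induced by this inclusion. Consequently $d$ reduces to
$$\genh_{p+q}^G(X_p,X_{p-1})
    \xrightarrow{\D_{p+q}}
      \genh_{p+q-1}^G(X_{p-1})
        \rightarrow
          \genh_{p+q-1}^G(X_{p-1},X_{p-2}),$$
which is precisely the boundary map of the triple $(X_p,X_{p-1},X_{p-2})$ as specified in Proposition \ref{prop:triple}.

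There is no real obstacle here; the proof is a direct chase through the definitions. The only minor subtlety is being careful about which triple each instance of the boundary map refers to, and noting that the two successive inclusions of relative pairs can be collapsed into one by functoriality — this is the single substantive observation of the argument.
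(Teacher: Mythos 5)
Your proof is correct and follows essentially the same route as the paper's: decompose $k$ via the definition of the boundary map of the triple $(X_p,X_{p-1},A)$, append the inclusion-induced map $j$, and collapse the two inclusion-induced arrows by functoriality to recover the boundary map of $(X_p,X_{p-1},X_{p-2})$. The only difference is that you have spelled out the intermediate steps a bit more explicitly than the paper does.
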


\begin{proof}
Let $n\doteq p+q$.  From the definition of the boundary map of the triple $(X_p,X_{p-1},A)$, we may decompose $d=j\circ k$ as
$$\genh_n^G(X_p,X_{p-1})
  \xrightarrow{\D_n}
  \genh_{n-1}^G(X_{p-1})
  \rightarrow
  \genh_{n-1}^G(X_{p-1},A)
  \rightarrow
  \genh_{n-1}^G(X_{p-1},X_{p-2}).
$$
By functoriality, the composition of the two arrows on the right is equal to $\genh_{n-1}^G(X_{p-1})\rightarrow\genh_{n-1}^G(X_{p-1},X_{p-2})$; thus the above composition is also equal to the boundary map of the triple $(X_p,X_{p-1},X_{p-2})$.
\end{proof}

Recall (as in \cite{Rotman} chapter 11, for instance) that an exact couple induces a spectral sequence.  Specifically, $D_{p,q}^1\doteq D_{p,q}$ and $E_{p,q}^1\doteq E_{p,q}$, $i^1\doteq i$, $j^1\doteq j$, $k^1\doteq k$, and $d^1\doteq j^1\circ k^1$; and for $r>1$, $D_{p,q}^r\doteq i(D_{p-1,q+1}^{r-1})$,
\begin{equation}\label{eq:genDEr}
  E_{p,q}^r\doteq
  \frac{\ker\{d^{r-1}:E_{p,q}^{r-1}
        \rightarrow E_{p-r+1,q+r-2}^{r-1}\}}
  {\im\{d^{r-1}:E_{p+r-1,q-r+2}^{r-1}
        \rightarrow E_{p,q}^{r-1}\}},
\end{equation}
$i^r$ is induced by $i^{r-1}$, $j^r$ is induced by $j^{r-1}\circ i^{-1}$, $k^r$ is induced by $k^{r-1}$, and $d^r\doteq j^r\circ k^r$.  In particular, $i^r$, $j^r$, $k^r$, and $d^r$ have respective bi--degrees $(1,-1)$, $(1-r,r-1)$, $(-1,0)$, and $(-r,r-1)$.

We will show that the spectral sequence induced by the exact couple in \eqref{eq:genDEdef} and \eqref{eq:genDEmaps} converges to the filtration in homology defined by
\begin{equation}\label{eq:genfiltration}
  \Phi^s\genh_n^G(X,A)
  \doteq
  \im\{\genh_n^G(X_s,A)\rightarrow\genh_n^G(X,A)\}
\end{equation}
for all integers $s$.  However, we will need to make a mild assumption.

\begin{definition}
A $G$--filtration $\{X_p\}_{p\in\Z}$ of the $G$--pair $(X,A)$ is {\bf homologically stable} with respect to the generalized equivariant homology theory $(\genh_\ast^G,\D_\ast)$ if for every $n\in\Z$, there exists $p_n\in\Z$ such that $\genh_n^G(X_p,A)\rightarrow\genh_n^G(X,A)$ is an isomorphism for all integers $p\geq p_n$.
\end{definition}

If the $G$--filtration is stable, that is, if $X_s=X$ for sufficiently large $s$, then it is evidently homologically stable.

\begin{lemma}\label{lem:genfiltration}
If $\{X_p\}_{p\in\Z}$ is homologically stable, then $\{\Phi^s\genh_n^G(X,A)\mid s\in\Z\}$ is a bounded filtration of $\genh_n^G(X,A)$.
\end{lemma}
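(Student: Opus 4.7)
The plan is to verify the three defining properties of a bounded filtration in turn: monotonicity, eventual vanishing at low degrees, and eventual exhaustion at high degrees. None of these looks hard; the only substantive ingredient is the homological stability hypothesis, and the boundedness-below half uses only the definition of a $G$--filtration together with the exactness axiom.

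First, for monotonicity, I would note that since $X_s\subseteq X_{s+1}$, the inclusion $(X_s,A)\rightarrow(X,A)$ factors through $(X_{s+1},A)$. By functoriality of $\genh_n^G$ the map $\genh_n^G(X_s,A)\rightarrow\genh_n^G(X,A)$ factors through $\genh_n^G(X_{s+1},A)\rightarrow\genh_n^G(X,A)$, so the image of the former is contained in the image of the latter; hence $\Phi^s\genh_n^G(X,A)\subseteq\Phi^{s+1}\genh_n^G(X,A)$.

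Next, for boundedness below, recall that by the definition of a $G$--filtration we have $X_s=A$ whenever $s<0$. Then $\genh_n^G(X_s,A)=\genh_n^G(A,A)$, which vanishes by the remark following the axioms (a consequence of the exactness axiom). Therefore $\Phi^s\genh_n^G(X,A)=0$ for all $s<0$.

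Finally, for boundedness above, the homological stability hypothesis supplies an integer $p_n$ such that $\genh_n^G(X_p,A)\rightarrow\genh_n^G(X,A)$ is an isomorphism for every $p\geq p_n$. In particular this map is surjective, so $\Phi^p\genh_n^G(X,A)=\genh_n^G(X,A)$ for $p\geq p_n$. Combined with the two previous steps, this shows the filtration is bounded, completing the proof. The main (very mild) obstacle is simply tracking that the hypothesis $X_k=A$ for $k<0$ together with the exactness axiom is what gives the vanishing at the bottom; everything else is formal.
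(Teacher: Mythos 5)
Your proposal is correct and follows the same route as the paper's proof: bounded below because $X_s=A$ for $s<0$ forces $\genh_n^G(X_s,A)=\genh_n^G(A,A)=0$, and bounded above directly from homological stability. You spell out the monotonicity check, which the paper leaves implicit, but the substance is identical.
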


\begin{proof}
The filtration is bounded below: $\Phi^s\genh_n^G(X,A)=0$ for $s<0$, since $X_s=A$ for such $s$.  It is also bounded above: $\Phi^s\genh_n^G(X,A)=\genh_n^G(X,A)$ for all sufficiently large $s$, by hypothesis.
\end{proof}

\begin{theorem}
\label{thm:genspec}
Suppose that $\{X_p\}_{p\in\Z}$ is a homologically stable $G$--filtration of $X$.  The spectral sequence induced by the exact couple in \eqref{eq:genDEdef} and \eqref{eq:genDEmaps} converges to the filtration \eqref{eq:genfiltration} of $\genh_\ast^G(X,A)$, and we have $E_{p,q}^2=\A_q\genh_p^G(X,A)$.
\end{theorem}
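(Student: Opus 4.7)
The plan is to split the proof into an identification of the $E^2$ page, which is essentially immediate from Lemma \ref{lem:d}, and a convergence argument that exploits homological stability together with the boundedness of the filtration from Lemma \ref{lem:genfiltration}.

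For the $E^2$ term, from \eqref{eq:genDEdef} and \eqref{eq:filt-chains} we have $E_{p,q}^1 = \genh_{p+q}^G(X_p, X_{p-1}) = \A_q C_p^G(X,A)$. Lemma \ref{lem:d} identifies $d^1 = j \circ k$ with the boundary map of the triple $(X_p, X_{p-1}, X_{p-2})$, which is exactly $\A_q \D_p$ by Definition \ref{def:asshom}. Taking homology in degree $p$ then yields $E_{p,q}^2 = \A_q \genh_p^G(X,A)$, as required.

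For convergence, I would first check that $E_{p,q}^1$ vanishes outside a finite strip along each antidiagonal $p+q = n$. When $p < 0$, we have $X_p = X_{p-1} = A$, so $E_{p,q}^1 = \genh_{p+q}^G(A,A) = 0$. For $p$ large enough (depending on $n$ and $n-1$), homological stability makes both $i: D_{p-1,q+1} \to D_{p,q}$ and $i: D_{p-1,q} \to D_{p,q-1}$ into isomorphisms: each factors through the isomorphism $\genh_\bullet^G(X_s,A) \cong \genh_\bullet^G(X,A)$ valid for $s$ beyond the stability threshold. The exactness of the couple $D \xrightarrow{i} D \xrightarrow{j} E \xrightarrow{k} D \xrightarrow{i} D$ then forces $E_{p,q}^1 = 0$. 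Consequently, for each fixed $(p,q)$ the differentials $d^r$ into and out of position $(p,q)$ eventually land in zero groups, so $E_{p,q}^r = E_{p,q}^\infty$ for all sufficiently large $r$.

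Finally I invoke the standard convergence theorem for exact-couple spectral sequences (see \cite{Rotman}, Ch.~11): once the associated filtration is bounded and stabilization holds, the $E^\infty$ page recovers the associated graded of the filtration. The filtration induced on $\genh_n^G(X,A)$ by the exact couple is $\im(D_{s, n-s} \to \genh_n^G(X,A)) = \im(\genh_n^G(X_s,A) \to \genh_n^G(X,A)) = \Phi^s \genh_n^G(X,A)$, which is bounded by Lemma \ref{lem:genfiltration}. We therefore obtain $E_{p,q}^\infty \cong \Phi^p \genh_{p+q}^G(X,A) / \Phi^{p-1} \genh_{p+q}^G(X,A)$, completing the convergence claim. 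The main technical point is the vanishing of $E^1$ in the large-$p$ regime, since only homological (not genuine) stability is assumed; once that is established, stabilization and the convergence identification reduce to routine exact-couple bookkeeping.
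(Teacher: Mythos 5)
Your identification of the $E^2$ page via Lemma \ref{lem:d} follows the paper exactly, and your overall convergence strategy is valid, but the bookkeeping you do differs from the paper's in a way worth noting.

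You first establish that $E_{p,q}^1$ vanishes for $p<0$ (since $X_p=A$ there) and also for $p$ sufficiently large, by a two-out-of-three argument: homological stability forces $i: D_{p-1,q+1}\to D_{p,q}$ and $i: D_{p-1,q}\to D_{p,q-1}$ to be isomorphisms once both source and target lie past the stability threshold, and then exactness of the couple at $E_{p,q}$ forces $E_{p,q}^1=0$. This "finite strip along each antidiagonal" observation is correct and is a clean way to see that the pages stabilize pointwise; it does not appear in the paper's argument. The paper instead works directly with the derived couples at level $r$: using the exact sequence $D_{p+r-2,q-r+2}^r\to D_{p+r-1,q-r+1}^r\to E_{p,q}^r\to D_{p-1,q}^r$, it shows $D_{p+r-1,q-r+1}^r\cong\Phi^p\genh_n^G(X,A)$ and $D_{p+r-2,q-r+2}^r\cong\Phi^{p-1}\genh_n^G(X,A)$ for $r$ large while $D_{p-1,q}^r=0$, so that $E_{p,q}^r\cong\Phi^p/\Phi^{p-1}$ falls out directly; a second exact sequence involving $E_{p+r,q-r+1}^r$ then shows stabilization $E_{p,q}^{r+1}=E_{p,q}^r$. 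Your proposal instead closes the argument by citing a ``standard convergence theorem.'' That is the one spot where you are a bit cavalier: the textbook statement (Rotman 11.13) is formulated for the exact couple of a \emph{filtered chain complex}, whereas here the couple comes from a filtered space and the abutment $\genh_\ast^G(X,A)$ is only related to the couple through functoriality and homological stability (the paper itself flags its proof as a ``modification'' of Rotman's). Your computation $\im(D_{s,n-s}\to\genh_n^G(X,A))=\Phi^s\genh_n^G(X,A)$ is correct by definition, but to substitute for the paper's explicit identification of $D^r$ with filtration steps you would need either to spell out why the citation applies, or to do that identification by hand. So: correct conclusion, a genuinely nicer observation about the $E^1$ page, but a small debt at the very end that the paper pays in full and you only promise to pay.
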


\begin{proof}
The identification of the $E^2$ term and the associated homology of a filtration is implied by lemma \ref{lem:d}.  For convergence, the basic argument is a modification of that of theorem 11.13 in \cite{Rotman}.  Specifically, fix $p,q$ and set $n\doteq p+q$.  Consider the exact sequence (for any $r$):
\begin{equation}\label{eq:gener}
  D_{p+r-2,q-r+2}^r\xrightarrow{i^r}
  D_{p+r-1,q-r+1}^r\xrightarrow{j^r}
  E_{p,q}^r\xrightarrow{k^r}
  D_{p-1,q}^r.
\end{equation}
Since $D_{p+r-1,q-r+1}^r=i^{(r-1)}(D_{p,q})=\im\{\genh_n^G(X_p,A)\rightarrow\genh_n^G(X_{p+r-1},A)\}$, we have that $D_{p+r-1,q-r+1}^r\cong\Phi^p\genh_n^G(X,A)$ for sufficiently large $r$.  The same argument also gives $D_{p+r-2,q-r+2}^r\cong\Phi^{p-1}\genh_n^G(X,A)$ for sufficiently large $r$.  Moreover, since $X_s=A$ for $s<0$, we also have $D_{p-1,q}^r=i^{(r-1)}(D_{p-r,q+r-1})=0$ for sufficiently large $r$.  Consequently, the exact sequence in \eqref{eq:gener} implies that $E_{p,q}^r\cong\Phi^p\genh_n^G(X,A)/\Phi^{p-1}\genh_n^G(X,A)$.

Next, we show that for fixed $n=p+q$ and sufficiently large $r$, $E_{p,q}^r=E_{p,q}^{r+1}$; so that $E_{p,q}^\infty=E_{p,q}^r$.  Indeed for large $r$, $k^r(E_{p,q}^r)=0$, by the above reasoning; whence $d^r(E_{p,q}^r)=0$.  Furthermore, we have the exact sequence
$$D_{p+2r-2,q-2r+3}^r\xrightarrow{i^r}
  D_{p+2r-1,q-2r+2}^r\xrightarrow{j^r}
  E_{p+r,q-r+1}^r.
$$
Again by the above reasoning, we have $D_{p+2r-2,q-2r+3}^r\cong\Phi^{p+r-1}\genh_{n+1}^G(X,A)$, and $D_{p+2r-1,q-2r+2}^r\cong\Phi^{p+r}\genh_{n+1}^G(X,A)$; thus boundedness of the filtration implies that $i^r$ is an isomorphism for large $r$.  Therefore, $j^r(D_{p+2r-1,q-2r+2}^r)=0$; and thus $d^r(E_{p+r,q-r+1}^r)=0$.  Consequently, $E_{p,q}^{r+1}=E_{p,q}^r$.
\end{proof}

\section{Equivariant singular homology}

The equivariant generalization of singular homology is defined in \cite{Brocker} and in \cite{Illman}.  We give a brief account of its construction, albeit with modified notation.

\subsection{Coefficient systems}

As in \cite{Bredon1}, one defines the category of {\em canonical $G$--orbits} to be the category ${\cal O}(G)$ whose objects are the left cosets of $G$, and whose morphisms are equivariant maps between these cosets.  From \cite{me}, we have the following.

\begin{proposition}\label{prop:cont}
Every morphism of ${\mathcal O}(G)$ is continuous.
\end{proposition}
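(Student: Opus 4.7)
The plan is to parameterize an arbitrary morphism in $\mathcal{O}(G)$ by a single element of $G$, and then to get continuity for free from the universal property of the quotient topology on $G/H$.

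First, I would fix an equivariant map $f:G/H\rightarrow G/K$ and pick a representative $a\in G$ with $f(eH)=aK$. Equivariance forces $f(gH)=g\cdot f(eH)=gaK$ for all $g\in G$, and well--definedness of this formula (i.e., independence of the choice of coset representative) amounts to the condition $a^{-1}Ha\subseteq K$. So every morphism of $\mathcal{O}(G)$ has the explicit form $gH\mapsto gaK$ for some $a\in G$ satisfying this containment; conversely, any such $a$ determines a morphism. This reduces the problem to showing that $gH\mapsto gaK$ is continuous.

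Next, I would build this map from clearly continuous pieces. Let $\rho_a:G\rightarrow G$ denote right multiplication by $a$, and let $q_H:G\rightarrow G/H$ and $q_K:G\rightarrow G/K$ be the canonical quotient maps. The map $\rho_a$ is continuous since $G$ is a topological group, and $q_K$ is continuous by definition of the quotient topology; hence $q_K\circ\rho_a:G\rightarrow G/K$, $g\mapsto gaK$, is continuous. By the preceding paragraph, $q_K\circ\rho_a$ is constant on fibers of $q_H$ and factors as $f\circ q_H$. Since $q_H$ is a quotient map, the universal property of the quotient topology yields continuity of $f$ from continuity of $f\circ q_H=q_K\circ\rho_a$.

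There is no genuine obstacle here; the only subtlety is resisting the temptation to assume continuity of $f$ when verifying that all morphisms of $\mathcal{O}(G)$ have the stated form. That verification uses only set--theoretic equivariance together with the coset relation $hH=eH$ for $h\in H$, so the parameterization is obtained without any topological input, and the topology enters only in the final factorization step.
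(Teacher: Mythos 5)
Your proof is correct. The paper itself does not include an argument for this proposition; it simply states the result and cites the author's companion paper \cite{me}. Your argument is the standard one: parameterize an equivariant map $f:G/H\to G/K$ by an element $a$ with $f(eH)=aK$ (subject to $a^{-1}Ha\subseteq K$ for well-definedness), observe that $g\mapsto gaK$ is continuous as the composite $q_K\circ\rho_a$ of right translation and the quotient projection, and then invoke the universal property of the quotient topology to push continuity down through $q_H$. This is self-contained, fills in the detail the paper defers to the reference, and is almost certainly the same argument appearing there; there is nothing to correct.
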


In the following, we will make use of the following morphisms in ${\mathcal O}(G)$:  if $H\leq G$ and $g\in G$, we define
$$\mu(g,H):G/H^g\rightarrow G/H,\quad
  \mu(g,H)(eH^g)\doteq gH
$$
(extended equivariantly).  Here $H^g$ denotes the conjugate of $H$ by $g$: $H^g\doteq\{ghg^{-1}\mid h\in H\}$.  When the subgroup $H$ can be discerned from context, we will write $\mu_g$ instead of $\mu(g,H)$.  Also, if $H\leq K\leq G$, then we define
$$\kappa(K,H):G/H\rightarrow G/K,\quad
  \kappa(K,H)(eH)\doteq eK
$$
(extended equivariantly).  When the subgroups $H,K$ can be deduced from context, we simply write $\kappa$ instead of $\kappa(K,H)$.

\begin{lemma}\label{lem:mu-kappa}
The maps $\mu(g,H)$ and $\kappa(K,H)$ are well--defined, equivariant, continuous, and satisfy (i) if $gH=g'H$, then $\mu(g,H)=\mu(g',H)$, (ii) $\mu(g,H)$ is a homeomorphism with $\mu(g,H)^{-1}=\mu(g^{-1},H^g)$, (iii) if $H\leq K\leq L\leq G$, then $\kappa(L,K)\circ\kappa(K,H)=\kappa(H,L)$, (iv) $\mu(g,H)\circ\mu(g',H^{g'})=\mu(g'g,H)$, and (v) if $H\leq K$, then $\mu(g,K)\circ\kappa(K^g,H^g)=\kappa(K,H)\circ\mu(g,H)$.
\end{lemma}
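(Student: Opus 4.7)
The overall strategy is to reduce every assertion to a single calculation on the identity coset, exploiting the fact that an equivariant map out of $G/H$ is completely determined by its value at $eH$. Concretely, once the domains and codomains of the two maps being compared are verified to match, their equality follows by checking that they agree at a single distinguished coset, and then invoking equivariance.

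I would first dispose of well--definedness, equivariance, and continuity. For $\mu(g,H)$, well--definedness amounts to the observation that if $x^{-1}x'\in H^g=gHg^{-1}$, then $(xg)^{-1}(x'g)\in H$, so $xgH=x'gH$; well--definedness of $\kappa(K,H)$ when $H\leq K$ is immediate. Equivariance is built into the construction (both maps are specified by extending equivariantly from a single value), and continuity is then automatic from Proposition~\ref{prop:cont}, since both $\mu(g,H)$ and $\kappa(K,H)$ are morphisms of $\mathcal{O}(G)$.

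For parts (i)--(v), the common tactic is: verify that source and target coincide, then compare values at $eH^g$ or $eH$. For (i), writing $g'=gh$ with $h\in H$ gives $H^{g'}=ghHh^{-1}g^{-1}=gHg^{-1}=H^g$, so the domains agree, and both maps send $eH^g$ to $gH$. For (ii), the identity $(H^g)^{g^{-1}}=H$ shows that $\mu(g^{-1},H^g)$ is a map $G/H\to G/H^g$, and the composition $\mu(g,H)\circ\mu(g^{-1},H^g)$ sends $eH\mapsto g^{-1}H^g\mapsto g^{-1}\cdot gH=eH$ by equivariance; the reverse composition is analogous. For (iii), both sides are equivariant maps $G/H\to G/L$ sending $eH$ to $eL$. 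For (iv), using the conjugation identity $(H^g)^{g'}=H^{g'g}$ to line up domains, one tracks the composition as a map $G/H^{g'g}\to G/H$, and then $\mu(g,H)(g'H^g)=g'\cdot gH=g'gH$, matching $\mu(g'g,H)(eH^{g'g})$. For (v), both $\mu(g,K)\circ\kappa(K^g,H^g)$ and $\kappa(K,H)\circ\mu(g,H)$ are equivariant maps $G/H^g\to G/K$ carrying $eH^g$ to $gK$.

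The principal obstacle is not conceptual but bookkeeping: one must keep careful track of the various conjugate subgroups $(H^g)^{g'}$, $(H^g)^{g^{-1}}$, $K^g$ appearing as sources and targets, and verify that inclusions such as $H^g\leq K^g$ (needed in (v)) are preserved under conjugation, so that each composition is a legitimate morphism of $\mathcal{O}(G)$. Once these domain/codomain matchings are established, each of (i)--(v) reduces to a one--line check at a single coset, as sketched above.
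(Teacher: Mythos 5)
Your proof is correct and follows essentially the same route as the paper's: check well--definedness of $\mu$ via the observation that $H^g$ stabilizes $gH$, note equivariance is automatic and continuity comes from Proposition~\ref{prop:cont}, and then verify each of (i)--(v) by matching domains via the conjugation identity $(H^{g_1})^{g_2}=H^{g_2g_1}$ and comparing values at the identity coset. The only difference is that you spell out the coset computations that the paper compresses into ``follow by computation'' (and you silently correct the two apparent typos in (iii) and (iv), reading $\kappa(H,L)$ as $\kappa(L,H)$ and $\mu(g',H^{g'})$ as $\mu(g',H^g)$, which is the intended meaning).
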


\begin{proof}
Since $(ghg^{-1})gH=gH$, $\mu(g,H)$ is well--defined; and since $H\leq K$, $\kappa(K,H)$ is also well--defined.  Both maps are a priori equivariant, and by proposition \ref{prop:cont}, continuous.  Properties (i) and (iv) follow from the fact $H^{g_1g_2}=(H^{g_2})^{g_1}$; the other properties follow by computation.
\end{proof}

A {\em covariant $R$--coefficient system} for $G$ is a covariant functor $\M$ from ${\cal O}(G)$ to the category of left $R$--modules and linear maps.  Note that if $H\leq G$, then $\M(\mu_g):\M(G/H^g)\rightarrow\M(G/H)$ is an isomorphism for any $g\in G$.  Now given a $G$--set $S$, the orbit $[s]\doteq Gs$ of $s\in S$ can be identified setwise with the the coset $G/G_s$, where $G_s$ is the stabilizer subgroup of $s$.  However, this identification depends on the choice of orbit representative $s$; indeed, if $s'=gs$, then $G_{s'}=G_s^g$.  To avoid this dependence, we make the following definition.

\begin{definition}
\label{def:orbitmodule}
Let $S$ be a $G$--set and $\M$ a covariant $R$--coefficient system for $G$.  For $s\in S$, we define the {\bf orbit module} to be
$$\M[s]
    \doteq
  \left(\bigoplus_{gG_s\in G/G_s}\M(G/G_s^g)
        \otimes_RR\{gs\}\right)
    /J_\M[s],$$
where $J_\M[s]$ is defined to be the submodule generated by all elements of the form $x\otimes gs-\M\bigl(\mu(g,G_s)\bigr)(x)\otimes s$, where $g\in G$ and $x\in\M(G/G_s^g)$.
\end{definition}

\noindent
Here $R\{gs\}$ denotes the free $R$--module with generator $gs\in S$; its usage is merely to provide a convenient indexing scheme.  We use square brackets to denote elements of $\M[s]$; i.e., we write $[x\otimes gs]$ instead of $x\otimes gs+J_\M[s]$.

\begin{lemma}
\label{lem:genindep}
If $[s']=[s]$, then $\M[s']=\M[s]$.
\end{lemma}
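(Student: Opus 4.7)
The plan is to observe that the ambient direct sums defining $\M[s]$ and $\M[s']$ are literally the same once their summands are reindexed by orbit elements instead of by cosets of the stabilizer, and then to verify that the relation submodules $J_\M[s]$ and $J_\M[s']$ coincide via a short computation using lemma \ref{lem:mu-kappa}.

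First I would fix $g_0 \in G$ with $s' = g_0 s$, so that $G_{s'} = G_s^{g_0}$. For $\M[s]$, the summand indexed by $gG_s$ is $\M(G/G_s^g) \otimes_R R\{gs\}$; since $G_s^g$ and $gs$ each depend only on the coset $gG_s$ (the former because $h \in G_s$ implies $G_s^h = G_s$), I would reindex by the orbit element $t = gs$ and use $G_s^g = G_{gs} = G_t$ to rewrite the summand as $\M(G/G_t) \otimes_R R\{t\}$. The identical reindexing applied to $\M[s']$ produces the same summand at each $t \in Gs = Gs'$, so the two ambient direct sums coincide as $R$--modules.

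To prove $J_\M[s] \subseteq J_\M[s']$, I would take a generator $\xi \doteq x \otimes gs - \M(\mu(g, G_s))(x) \otimes s$ of $J_\M[s]$ with $x \in \M(G/G_s^g)$. Using $gs = (gg_0^{-1})s'$ and $s = g_0^{-1}s'$, together with the identities $G_{s'}^{gg_0^{-1}} = G_s^g$ and $G_{s'}^{g_0^{-1}} = G_s$, I would form two generators of $J_\M[s']$:
\begin{align*}
\alpha &\doteq x \otimes (gg_0^{-1})s' - \M(\mu(gg_0^{-1}, G_{s'}))(x) \otimes s',\\
\beta &\doteq \M(\mu(g, G_s))(x) \otimes g_0^{-1}s' - \M\bigl(\mu(g_0^{-1}, G_{s'}) \circ \mu(g, G_s)\bigr)(x) \otimes s'.
\end{align*}
A direct expansion yields $\alpha - \beta = \xi + \delta \otimes s'$, where $\delta = \M\bigl(\mu(g_0^{-1}, G_{s'}) \circ \mu(g, G_s)\bigr)(x) - \M(\mu(gg_0^{-1}, G_{s'}))(x)$. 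Applying the composition rule in lemma \ref{lem:mu-kappa}(iv) with outer subgroup $G_{s'}$ (and noting that $G_s = G_{s'}^{g_0^{-1}}$) gives $\mu(g_0^{-1}, G_{s'}) \circ \mu(g, G_s) = \mu(gg_0^{-1}, G_{s'})$, so $\delta = 0$ and $\xi = \alpha - \beta \in J_\M[s']$. The reverse inclusion is immediate by symmetry, using $g_0^{-1}$ in place of $g_0$.

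The main obstacle will be the bookkeeping of the various conjugate stabilizer subgroups and the correct choice of the auxiliary generator $\beta$ so that the error term is killed by a single invocation of the composition rule in lemma \ref{lem:mu-kappa}; once this matching is set up, everything else is formal.
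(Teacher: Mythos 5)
Your proof is correct and takes essentially the same route as the paper: after noting that the ambient direct sums coincide (which the paper leaves implicit), both arguments reduce to one inclusion of relation submodules and express a generator of one $J_\M$ as a difference of two generators of the other, with the error term killed by the composition rule in lemma \ref{lem:mu-kappa}(iv). The paper simply packages the same computation in an auxiliary function $\omega(x_1,g_1)$, so it is briefer but not a different argument.
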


\begin{proof}
Suppose $s'=gs$.  It suffices to show $J_\M[s']=J_\M[s]$, for which in turn, it suffices to show $J_\M[s']\subseteq J_\M[s]$.  Define $\omega(x_1,g_1)\doteq x_1\otimes g_1s-\M(\mu_{g_1})(x_1)\otimes s\in J_\M[s]$.  One then computes that $x'\otimes g's'-\M(\mu_{g'})(x')\otimes s'=\omega(x',g'g)-\omega(\M(\mu_{g'})(x'),g)$.
\end{proof}

\begin{lemma}
\label{lem:orbitmodule}
The map $\M(G/G_s^g)\rightarrow\M[s]$ given by $x\mapsto[x\otimes gs]$ is an isomorphism for any $g\in G$ and $s\in S$.
\end{lemma}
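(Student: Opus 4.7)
The plan is to reduce the general case to the case $g=e$ and then construct a two-sided inverse for the resulting map $\phi_e:\M(G/G_s)\rightarrow\M[s]$, $y\mapsto[y\otimes s]$.

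First I would observe that the defining relation of $J_\M[s]$ immediately gives, for any $g\in G$ and $x\in\M(G/G_s^g)$, the identity $[x\otimes gs]=[\M(\mu(g,G_s))(x)\otimes s]$. Writing $\phi_g:\M(G/G_s^g)\rightarrow\M[s]$ for the map of the statement, this reads $\phi_g=\phi_e\circ\M(\mu(g,G_s))$. By Lemma \ref{lem:mu-kappa}(ii), $\mu(g,G_s)$ is an isomorphism in $\mathcal{O}(G)$, so $\M(\mu(g,G_s))$ is an isomorphism of $R$--modules. Hence it suffices to prove the claim when $g=e$.

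For $\phi_e$, surjectivity is essentially free from the same computation: any generator $[x\otimes g's]$ of $\M[s]$ equals $\phi_e\bigl(\M(\mu(g',G_s))(x)\bigr)$, and $\M[s]$ is generated by such elements. For injectivity I would construct a retraction. Define an $R$--linear map on the direct sum
\begin{equation*}
\psi:\bigoplus_{g'G_s\in G/G_s}\M(G/G_s^{g'})\otimes_R R\{g's\}\longrightarrow\M(G/G_s),\quad
\psi(x\otimes g's)\doteq\M(\mu(g',G_s))(x).
\end{equation*}
The key check is that $\psi$ kills the generators of $J_\M[s]$: applying $\psi$ to $x\otimes g's-\M(\mu(g',G_s))(x)\otimes s$ yields $\M(\mu(g',G_s))(x)-\M(\mu(e,G_s))\circ\M(\mu(g',G_s))(x)$, which vanishes because $\mu(e,G_s)$ is the identity of $G/G_s$ and $\M$ is a functor. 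Thus $\psi$ descends to $\tilde\psi:\M[s]\rightarrow\M(G/G_s)$, and by direct computation $\tilde\psi\circ\phi_e(y)=\M(\mu(e,G_s))(y)=y$, so $\phi_e$ is injective.

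Combining surjectivity and injectivity gives that $\phi_e$ is an isomorphism, and then the factorization $\phi_g=\phi_e\circ\M(\mu(g,G_s))$ shows $\phi_g$ is an isomorphism for every $g$. The only subtle point is ensuring that $\psi$ is well--defined on the direct sum (which requires Lemma \ref{lem:genindep} to know the indexing set $G/G_s$ is unambiguous) and that it respects the relations; everything else is bookkeeping using the functoriality of $\M$ and the identity $\mu(e,G_s)=\mathrm{id}$.
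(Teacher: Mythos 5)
Your proof is correct, and it takes a somewhat different and more explicit route than the paper's. For the reduction to $g=e$, the paper invokes Lemma~\ref{lem:genindep} to replace $s$ by the representative $gs$ (so $G_s^g$ becomes the literal stabilizer of the new representative), whereas you keep $s$ fixed and factor $\phi_g=\phi_e\circ\M(\mu(g,G_s))$, using Lemma~\ref{lem:mu-kappa}(ii) to see $\M(\mu(g,G_s))$ is an isomorphism; both work. For injectivity, the paper simply asserts ``$x\otimes s\notin J_\M[s]$ unless $x=0$,'' which is true but not truly immediate; your retraction $\psi$ is exactly the computation that justifies that claim, and constitutes an honest proof rather than an assertion. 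Your surjectivity argument is the same as the paper's. One small misattribution at the end: the well-definedness of $\psi$ on each summand (independence of the coset representative $g'$, and the fact that $G_s^{g'}$ and $g's$ depend only on $g'G_s$) is supplied by Lemma~\ref{lem:mu-kappa}(i), not by Lemma~\ref{lem:genindep}; the latter is about independence of $\M[s]$ from the choice of $s$ in its $G$--orbit, which is what the paper's reduction step needs but yours does not.
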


\begin{proof}
By lemma \ref{lem:genindep}, we may assume $g=e$.  Injectivity follows from the fact that $x\otimes s\not\in J_\M[s]$ unless $x=0$.  Surjectivity follows from the identity $[x'\otimes g's]=[\M(\mu_{g'H})(x')\otimes s]$.
\end{proof}

The orbit module allows us to extend a given covariant coefficient system $\M$ for $G$ to a covariant functor from the category of $G$--sets and equivariant maps to the category of left $R$--modules and linear maps.

\begin{definition}
\label{def:coeffext}
Suppose $\M$ is a covariant $R$--coefficient system for $G$.  Define $\M(\emptyset)=0$.  For a $G$--set $S\neq\emptyset$, define
$$\M(S)\doteq\bigoplus_{[s]\subseteq S}\M[s]$$
(the sum is over orbits $[s]$ of $S$).  Moreover, if $S,S^\prime$ are $G$--sets and $f:S\rightarrow S^\prime$ is equivariant, then define
$$\M(f):\M(S)\rightarrow\M(S^\prime)$$
by extending $\M(f)([x\otimes s])\doteq[\M\bigl(\kappa(G_{f(s)},G_s)\bigr)(x)\otimes f(s)]$ linearly, where $s\in S$ and $x\in\M(G/G_s)$.
\end{definition}

That $\M(f)$ is independent of the choice of orbit representative $s$ follows from lemma \ref{lem:mu-kappa} and functoriality of $\M$, as does the following.

\begin{theorem}
Suppose $\M$ is a covariant coefficient system.  If $S$ is a $G$--set, then $\M({\it id}_S)$ is the identity.  Moreover, if $f:S\rightarrow S'$ and $f^\prime:S'\rightarrow S''$ are equivariant maps of $G$--sets, then $\M(f'\circ f)=\M(f')\circ\M(f)$.\hfill\qed
\end{theorem}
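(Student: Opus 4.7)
The plan is to verify both statements by a direct check on generators of the orbit modules, appealing only to the functoriality of $\M$ on ${\cal O}(G)$ together with the compatibility properties recorded in Lemma \ref{lem:mu-kappa}.

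First I would reduce the assertions to a calculation on simple generators. Since $\M(S) = \bigoplus_{[s] \subseteq S}\M[s]$ is a direct sum indexed by orbits, it suffices to work one orbit at a time, and by Lemma \ref{lem:orbitmodule} each summand $\M[s]$ is generated by elements of the form $[x \otimes s]$ with $x \in \M(G/G_s)$ (i.e., with orbit representative $g = e$). Both $\M(\mathit{id}_S)$ and $\M(f'\circ f)$ respectively $\M(f')\circ\M(f)$ are $R$-linear, so it is enough to verify the two equalities on such generators.

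For the identity statement, $f = \mathit{id}_S$ gives $f(s) = s$ and hence $G_{f(s)} = G_s$; the map $\kappa(G_s, G_s)$ is then the identity of $G/G_s$, so $\M(\kappa(G_s,G_s))$ is the identity by functoriality, and $\M(\mathit{id}_S)([x \otimes s]) = [x \otimes s]$ as required. For composition, equivariance of $f$ and $f'$ forces the containments $G_s \leq G_{f(s)} \leq G_{f'(f(s))}$, so unfolding both sides yields
$$
\M(f')\circ\M(f)\bigl([x\otimes s]\bigr)
  = \bigl[\M\bigl(\kappa(G_{f'(f(s))},G_{f(s)})\bigr)\circ\M\bigl(\kappa(G_{f(s)},G_s)\bigr)(x)\otimes f'(f(s))\bigr],
$$
while $\M(f'\circ f)([x\otimes s]) = [\M(\kappa(G_{f'(f(s))},G_s))(x)\otimes f'(f(s))]$. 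Functoriality of $\M$ combined with property (iii) of Lemma \ref{lem:mu-kappa} collapses the composition of the two $\kappa$'s into $\M(\kappa(G_{f'(f(s))},G_s))$, matching the two expressions.

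The only genuinely delicate point — which the preceding paragraph of the paper invokes but which the theorem statement absorbs into ``That $\M(f)$ is independent of the choice of orbit representative $s$ follows from Lemma \ref{lem:mu-kappa} and functoriality of $\M$'' — is the well-definedness of $\M(f)$ on $\M[s]$ itself, since this is the sole place where the relations defining $J_\M[s]$ must be respected. This reduces to verifying that evaluating on the representative $gs$ and on $s$ agree, which amounts to the identity
$$
\M\bigl(\mu(g,G_{f(s)})\bigr)\circ\M\bigl(\kappa(G_{f(s)}^g,G_s^g)\bigr) = \M\bigl(\kappa(G_{f(s)},G_s)\bigr)\circ\M\bigl(\mu(g,G_s)\bigr),
$$
which is precisely property (v) of Lemma \ref{lem:mu-kappa} with $H = G_s$, $K = G_{f(s)}$, pushed through the functor $\M$. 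Once that compatibility is in hand, the identity and composition statements above are entirely formal.
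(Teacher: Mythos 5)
Your verification on generators $[x\otimes s]$ using linearity, Lemma~\ref{lem:orbitmodule}, property~(iii) of Lemma~\ref{lem:mu-kappa}, the functoriality of $\M$ on ${\cal O}(G)$, and property~(v) for well-definedness is exactly the argument the paper is gesturing at (the paper leaves the proof as immediate after the remark preceding the theorem). Your proof is correct and follows the intended route; no gaps.
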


A generalized equivariant homology theory $(\genh_\ast^G,\D_\ast)$ induces a covariant coefficient system by restricting $\genh_\ast^G$ to ${\cal O}(G)$.  However if $X$ is a $G$--space and $x\in X$, we potentially need to distinguish between the homology module $\genh_\ast^G([x])$ and the orbit module $\genh_\ast^G[x]$.  These two modules are isomorphic in the case when $G$ is a compact Lie group and $X$ is Hausdorff, for then the quotient space $G/G_x$ and the subspace $[x]$ are equivariantly homeomorphic (see proposition 4.1 of chapter 1 in \cite{Bredon2}), and lemma \ref{lem:orbitmodule} applies.  On the other hand, if $G$ is not compact Lie, then the two modules need not coincide.

\subsection{Equivariant singular homology}

Given a space $X$, we let $S_n(X)$ denote the set of all singular $n$--simplices: the set of all (continuous) maps $\sigma:\Delta_n\rightarrow X$, where $\Delta_n$ denotes the standard $n$--simplex.  In particular when $n<0$, we have $S_n(X)=\emptyset$.  If $X$ is a $G$--space, then $S_n(X)$ is a $G$--set, where $g\sigma:\Delta_n\rightarrow X$ is the map $(g\sigma)(x)\doteq g(\sigma(x))$.

Let $\Delta_n^{(k)}$ denote the $k$--th face of $\Delta_n$; and if $\sigma$ is a singular $n$--simplex, we let $\sigma^{(k)}:\Delta_{n-1}\rightarrow X$ denote the composition of the identification map $\Delta_{n-1}\rightarrow\Delta_n^{(k)}$ followed by $\sigma$.  Each map $\D_n^{(k)}:S_n(X)\rightarrow S_{n-1}(X)$, with $\D_n^{(k)}(\sigma)\doteq\sigma^{(k)}$, is equivariant; so we may make the following definition.

\begin{definition}\label{def:sing-chains}
If $X$ is a $G$--space and $\M$ is a covariant coefficient system, we define for each integer $n$
$$S_n^G(X;\M)\doteq\M(S_n(X)),$$
and the graded $R$--module $S_\ast^G(X;\M)\doteq\oplus_{n\in\Z}S_n(X;\M)$.  In addition, we define the map
$$\D_n:S_n^G(X;\M)\rightarrow S_{n-1}^G(X;\M)$$
by $\D_n\doteq\sum_{k=0}^n(-1)^k\M(\D_n^{(k)})$.
\end{definition}

As in the nonequivariant case, one verifies that $\D_{n-1}\circ\D_n=0$.  Moreover, if $(X,A)$ is a $G$--pair, then $S_n^G(A;\M)$ is necessarily a summand of $S_n^G(X;\M)$, and $\D_n(S_n^G(A;\M))\subseteq S_{n-1}^G(A;\M)$.

\begin{definition}\label{def:sing-hom}
If $(X,A)$ is a $G$--pair, and $\M$ is a covariant coefficient system, we define $S_\ast^G(X,A;\M)\doteq S_\ast^G(X;\M)/S_\ast^G(A;\M)$, and we define $\D_\ast:S_\ast^G(X,A;\M)\rightarrow S_{\ast-1}^G(X,A;\M)$ to be the map induced by $\D_\ast:S_\ast^G(X;\M)\rightarrow S_{\ast-1}^G(X;\M)$ under this quotient.  We then define the {\bf equivariant singular homology} of $(X,A)$ to be the $R$--module
$$H_\ast^G(X,A;\M)\doteq H(S_\ast^G(X,A;\M),\D_\ast);$$
i.e., the homology of the chain complex $(S_\ast^G(X,A;\M),\D_\ast)$.
\end{definition}

In particular, $H_n^G(X,A;\M)$ is trivial for $n<0$.  From \cite{Brocker} (in the case $G$ is discrete) and \cite{Illman} (in the general case), we have the following.

\begin{proposition}
\label{prop:singhom}
Equivariant singular homology $(H_\ast^G,\D_\ast)$ with coefficients in a covariant coefficient system $\M$ is a generalized equivariant homology theory.  Moreover when restricted to ${\cal O}(G)$, $H_n^G$ is trivial if $n\neq 0$, and can be naturally identified with $\M$ otherwise.
\end{proposition}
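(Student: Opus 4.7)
The plan is to follow the classical Eilenberg--Steenrod verification for nonequivariant singular homology, tracking $G$--equivariance at each step; functoriality, the three axioms, and the coefficient computation on $\mathcal{O}(G)$ are treated separately. Functoriality will be immediate: an equivariant map $f:(X,A)\rightarrow(Y,B)$ induces equivariant maps $S_n(f):S_n(X)\rightarrow S_n(Y)$ commuting with each face operator $\D_n^{(k)}$, so applying $\M$ via definition \ref{def:coeffext} yields chain maps whose composition is visibly multiplicative.

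For the exactness axiom, I would observe that the graded inclusion $S_\ast^G(A;\M)\hookrightarrow S_\ast^G(X;\M)$ is split as graded $R$--modules, since each $G$--set $S_n(A)$ is a union of $G$--orbits of $S_n(X)$; the resulting short exact sequence of chain complexes produces the long exact homology sequence with the required connecting maps.  For the homotopy and excision axioms, the crucial observation is that the classical chain-level constructions---the prism operator for homotopy, and iterated barycentric subdivision together with its accompanying chain homotopy for excision---are all given by precomposition of singular simplices with fixed maps between simplices.  Since precomposition with a map $\Delta_m\rightarrow\Delta_n$ (or $\Delta_m\rightarrow\Delta_n\times[0,1]$) commutes with the postcomposition $G$--action on the target, these operators are equivariant on $S_\ast(-)$; applying $\M$ then yields chain homotopies in $S_\ast^G(-;\M)$, and the classical arguments go through verbatim.

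For the coefficient computation, fix $H\leq G$; I would show that the augmented chain complex
\begin{equation*}
\cdots\rightarrow S_1^G(G/H;\M)\xrightarrow{\D_1}S_0^G(G/H;\M)\xrightarrow{\varepsilon}\M(G/H)\rightarrow 0
\end{equation*}
is exact, where $\varepsilon$ is the isomorphism of lemma \ref{lem:orbitmodule} arising from the unique orbit $S_0(G/H)=G/H$.  This immediately identifies $H_0^G(G/H;\M)\cong\M(G/H)$ and gives vanishing of $H_n^G(G/H;\M)$ for $n\geq 1$.  Naturality in the morphisms $\mu(g,H)$ and $\kappa(K,H)$ of $\mathcal{O}(G)$ will then reduce, via the orbit-module decomposition of definition \ref{def:orbitmodule}, to lemma \ref{lem:mu-kappa} together with functoriality of $\M$.

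The hardest step will be exactness in positive degrees of the augmented complex above.  The naive contraction---coning on $eH$---works nonequivariantly but fails to commute with the $G$--action unless $H=G$, since $eH$ is not $G$--fixed in general.  One must instead appeal to an acyclic-models argument that tracks the orbit-module structure of $S_n(G/H)$ and averages the contraction across cosets; this is precisely the content of Brocker (for discrete $G$) and Illman (in general), so I would invoke their calculation rather than reprove it.  The remaining verifications are then purely bookkeeping of the orbit-module decomposition applied to each orbit of $S_n(X)$ and $S_n(A)$.
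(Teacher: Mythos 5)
The paper itself gives no proof of this proposition; it is stated as imported from the cited works of Br\"ocker (for discrete $G$) and Illman (for arbitrary $G$). Your sketch is more detailed than what the paper offers, and the parts you actually verify are sound: since $S_n(A)$ is a union of $G$--orbits of $S_n(X)$, the subcomplex $S_\ast^G(A;\M)$ is a direct summand and the resulting short exact sequence of chain complexes yields the long exact homology sequence; the prism operator and the iterated barycentric-subdivision chain homotopy are finite signed sums of operators of the form $\sigma\mapsto\psi\circ(\sigma\times\mathrm{id})\circ\iota$ with $\iota$ a fixed map of standard simplices, each term being a $G$--equivariant map of singular-simplex $G$--sets and hence receiving $\M(-)$ via definition \ref{def:coeffext}; and $G$--invariance of the cover $\{A,X\setminus\bar{U}\}$ makes the number of subdivisions needed constant along orbits, so the small-simplex argument descends. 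You also correctly isolate the real obstruction in the coefficient computation (coning on $eH$ is not equivariant unless $H=G$) and hand that step back to Br\"ocker and Illman, which is exactly where the paper starts; in that sense your proposal ultimately takes the same route. The one quibble is your characterization of that step as an acyclic-models/averaging argument: in the discrete case it is much simpler, since a singular simplex into a discrete $G/H$ is constant, so $S_n(G/H)\cong G/H$ as a $G$--set in every degree and the complex degenerates exactly as for a point, with $H_0^G=\M(G/H)$ and $H_n^G=0$ otherwise; it is the non-discrete case, treated by Illman, where the difficulty actually lies. Since you cite both sources for this step anyway, the imprecision is harmless.
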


\noindent
That is, there are isomorphisms $i_H:H_0^G(G/H;\M)\rightarrow\M(G/H)$ for each $H\leq G$ such that if $\eta:G/H\rightarrow G/K$ is equivariant, then $\M(\eta)\circ i_H=i_K\circ H_0^G(\eta)$.

\begin{theorem}
Equivariant singular homology satisfies the arbitrary sum property.
\end{theorem}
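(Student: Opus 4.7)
The plan is to prove the arbitrary sum property at the chain level, by exhibiting an isomorphism of chain complexes $S_\ast^G(X,A;\M)\cong\bigoplus_{\alpha\in\A}S_\ast^G(X_\alpha,A_\alpha;\M)$ and then passing to homology.

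First I would observe that because the standard simplex $\Delta_n$ is connected, any continuous map $\sigma:\Delta_n\rightarrow X=\sum_{\alpha\in\A}X_\alpha$ has image in exactly one summand $X_\alpha$. Hence as sets $S_n(X)=\sum_{\alpha\in\A}S_n(X_\alpha)$, and this decomposition is $G$--equivariant, since the $G$--action permutes the summands trivially on indices (each $X_\alpha$ is already a $G$--subspace). Similarly $S_n(A)=\sum_{\alpha\in\A}S_n(A_\alpha)$, and the face maps $\D_n^{(k)}$ send $S_n(X_\alpha)$ into $S_{n-1}(X_\alpha)$.

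Next I would check that the extension of $\M$ to $G$--sets (Definition \ref{def:coeffext}) is additive over disjoint unions. Indeed, if $S=\sum_{\alpha\in\A}S_\alpha$ is a disjoint union of $G$--sets, then every $G$--orbit of $S$ lies entirely in a unique $S_\alpha$, so the orbits of $S$ are exactly the disjoint union of the orbits of the $S_\alpha$. Since $\M(S)$ is defined as the direct sum of orbit modules $\M[s]$ over orbits, we obtain a canonical isomorphism $\M(S)\cong\bigoplus_{\alpha\in\A}\M(S_\alpha)$, and this isomorphism is natural with respect to the equivariant inclusions $S_\alpha\hookrightarrow S$. Combining this with the previous paragraph gives $S_n^G(X;\M)\cong\bigoplus_{\alpha\in\A}S_n^G(X_\alpha;\M)$ and likewise for $A$, and these isomorphisms commute with the boundary maps (because $\D_n=\sum_k(-1)^k\M(\D_n^{(k)})$ and each face operator preserves the summand decomposition).

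Finally, passing to the quotient $S_\ast^G(X,A;\M)=S_\ast^G(X;\M)/S_\ast^G(A;\M)$ yields a chain isomorphism $S_\ast^G(X,A;\M)\cong\bigoplus_{\alpha\in\A}S_\ast^G(X_\alpha,A_\alpha;\M)$. Since homology commutes with arbitrary direct sums of chain complexes, we obtain the desired isomorphism $\bigoplus_{\alpha\in\A}H_\ast^G(X_\alpha,A_\alpha;\M)\rightarrow H_\ast^G(X,A;\M)$, and by construction it agrees with the map induced by the inclusions on summands. The only real step requiring care is verifying the additivity of $\M$ on disjoint unions of $G$--sets directly from Definition \ref{def:coeffext}; the remainder is formal, since unlike the situation for general generalized homology theories, we have direct access to the underlying chain complex.
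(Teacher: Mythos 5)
Your proof is correct and takes essentially the same approach as the paper's: decompose the singular chain complex of a disjoint union as a direct sum of chain complexes (via connectivity of $\Delta_n$ and additivity of $\M$ on disjoint $G$--sets) and pass to homology. The paper simply states this identification without spelling out the intermediate steps you give.
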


\begin{proof}
Let $\{(X_\alpha,A_\alpha)\}_{\alpha\in\A}$ be a collection of $G$--pairs, and set $X\doteq\sum_{\alpha\in\A}X_\alpha$ and $A\doteq\sum_{\alpha\in\A}A_\alpha$.  From definitions \ref{def:coeffext}, \ref{def:sing-chains}, and \ref{def:sing-hom}, we may make the identification $S_\ast^G(X,A;\M)=\oplus_{\alpha\in\A}S_\ast^G(X_\alpha,A_\alpha;\M)$; and under this identification, $\D_\ast=\oplus_{\alpha\in\A}\D_\ast^\alpha$, where $\D_\ast^\alpha\doteq\D_\ast:S_\ast^G(X_\alpha,A_\alpha;\M)\rightarrow S_{\ast-1}^G(X_\alpha,A_\alpha;\M)$.
\end{proof}

\begin{theorem}
If $G$ is compact, then equivariant singular homology has compact supports.
\end{theorem}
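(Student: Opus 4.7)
The plan is, given $x \in H_n^G(X,A;\M)$, to exhibit a compact $G$-subpair $(X_c,A_c) \subseteq (X,A)$ and a class $x_c \in H_n^G(X_c,A_c;\M)$ whose image under the inclusion-induced map is $x$. I would begin by choosing a representative cycle $\tilde c \in S_n^G(X;\M)$, i.e., a chain with $\D_n\tilde c \in S_{n-1}^G(A;\M)$. By definitions~\ref{def:coeffext} and~\ref{def:sing-chains}, $\tilde c$ is a finite sum $\sum_i[x_i\otimes\sigma_i]$ supported on finitely many orbits of singular $n$-simplices $\sigma_i\colon\Delta_n\to X$. The boundary $\D_n\tilde c$ is supported on finitely many orbits of $(n-1)$-simplices, each of which arises as a $G$-translate of some face $\sigma_i^{(k)}$ and, since $\D_n\tilde c\in S_{n-1}^G(A;\M)$, must have image in $A$; pick representatives $\tau_j\colon\Delta_{n-1}\to A$ for these finitely many orbits.

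The essential step, and the one requiring compactness of $G$, is to enlarge each $\sigma_i(\Delta_n)$ to its $G$-orbit without losing compactness. Each $\sigma_i(\Delta_n)\subseteq X$ is compact as the continuous image of $\Delta_n$, and the orbit $G\cdot\sigma_i(\Delta_n)$ is the image of $G\times\sigma_i(\Delta_n)$ under the continuous action map, hence compact when $G$ is compact. I would then set
$$X_c\doteq\bigcup_iG\cdot\sigma_i(\Delta_n),\qquad A_c\doteq\bigcup_jG\cdot\tau_j(\Delta_{n-1}),$$
each a finite union of compact $G$-subspaces of $X$ (respectively $A$), hence itself a compact $G$-subspace. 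Because each $\tau_j$ is a $G$-translate of some $\sigma_i^{(k)}$, the set $\tau_j(\Delta_{n-1})$ is contained in the $G$-orbit of $\sigma_i(\Delta_n)$, so $A_c\subseteq X_c$ and $(X_c,A_c)$ is a compact $G$-pair inside $(X,A)$.

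The remaining task is to promote $\tilde c$ to a chain in $S_n^G(X_c;\M)$. The equivariant injection of $G$-sets $S_n(X_c)\hookrightarrow S_n(X)$ identifies each orbit on the left with a same-stabilizer orbit on the right, so by definition~\ref{def:coeffext} the induced map $S_n^G(X_c;\M)\to S_n^G(X;\M)$ is the inclusion of a direct summand. Since each $\sigma_i$ factors through $X_c$ by construction, $\tilde c$ lifts uniquely to $\tilde c_c\in S_n^G(X_c;\M)$; the analogous observation for $A_c$ gives $\D_n\tilde c_c\in S_{n-1}^G(A_c;\M)$. Thus $\tilde c_c$ descends to a cycle in $S_n^G(X_c,A_c;\M)$, and its homology class is the desired $x_c$.

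The only genuinely non-formal step is verifying that the orbit of a compact set is compact, which is precisely where the hypothesis that $G$ is compact enters. Everything else amounts to bookkeeping with the definition of $\M$ on $G$-sets and the naturality of its evaluation on equivariant inclusions; without compactness of $G$ the set $X_c$ need not be compact, and this is exactly where the construction would break down.
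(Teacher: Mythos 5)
Your proof is correct and follows essentially the same route as the paper: pick a relative cycle representing $x$, take $X_c$ to be the union of the $G$-orbits of the images of its constituent singular simplices (compact since $G$ is compact), take $A_c$ to be the analogous set for the boundary, and observe the chain lifts to a relative cycle for $(X_c,A_c)$. The paper states this more tersely, but the construction and the use of compactness are identical; your extra detail on why the lift exists is just the bookkeeping the paper leaves implicit.
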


\begin{proof}
Let $(X,A)$ be a $G$--pair.  Note that $G/H$ is compact for any $H\leq G$, and the orbit under $G$ of the image of any singular simplex of $X$ is also compact.  For $y\in S_\ast^G(X;\M)$, let $X(y)$ denote the union of all orbits $G\cdot{\rm im}(\sigma)$, where $[x_\sigma\otimes\sigma]$ is a nonzero summand of $y$ for some $\sigma\in S_\ast(X)$ and $x_\sigma\in\M(G/G_\sigma)$; observe that $X(y)$ is compact.  Given $x\in H_n^G(X,A;\M)$, represent $x$ by some $y\in S_n^G(X;\M)$ and take $X_c\doteq X(y)$, $A_c\doteq X(\D{y})$.
\end{proof}

\section{Homology of an equivariant simplex}

Most of the ideas and proofs in this section are equivariant generalizations of those given in \cite{ES}, chapter III, sections 3 and 4.

Let $s$ denote an ordered $p$--simplex with vertices $v_0<v_1<\cdots<v_p$.  We let $s_k$ denote the $k$--th face of $s$: the ordered $(p-1)$--simplex with the vertex $v_k$ omitted.  Also, we let $\hat{s}_k$ denote the simplicial subcomplex of $s$ obtained by removing the interior of $s_k$ from $\D s$; that is, $\hat{s}_k$ is the union of all $l$--faces of $s$, with $l\neq k$.  The $G$--action on the {\em equivariant} simplex $G/H\times s$ (with $H\leq G$) will always be on the first factor only: $g\cdot(g'H,x)=(gg'H,x)$.

\begin{lemma}
\label{lem:simpincidence}
If $H\leq G$, $p\geq 1$, and $0\leq k\leq p$, then the map
$$[s:s_k]_\ast^H:
    \genh_\ast^G(G/H\times s,G/H\times\D s)
      \rightarrow
    \genh_{\ast-1}^G(G/H\times s_k,G/H\times\D s_k),
$$
defined as the composition
\begin{multline*}
  \genh_\ast^G(G/H\times s,G/H\times\D s)
    \xrightarrow{\D_\ast}
  \genh_{\ast-1}^G(G/H\times\D s,G/H\times\hat{s}_k)\\
    \leftarrow
  \genh_{\ast-1}^G(G/H\times s_k,G/H\times\D s_k),
\end{multline*}
is an isomorphism.
\end{lemma}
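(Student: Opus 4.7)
My plan is to prove the composition $[s:s_k]_\ast^H$ is an isomorphism by showing that each of its two constituent maps is separately an isomorphism.

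First, the boundary $\D_\ast\colon \genh_\ast^G(G/H\times s, G/H\times\D s)\to \genh_{\ast-1}^G(G/H\times\D s, G/H\times\hat{s}_k)$ arises from the long exact sequence of the triple $(G/H\times s,\, G/H\times\D s,\, G/H\times\hat{s}_k)$ (Proposition~\ref{prop:triple}). Inspection of the surrounding terms shows it suffices to verify $\genh_\ast^G(G/H\times s, G/H\times\hat{s}_k)=0$. I would obtain this by exhibiting an equivariant strong deformation retract of $G/H\times s$ onto $G/H\times\hat{s}_k$: in barycentric coordinates on $s$, subtract the minimum of $\{\lambda_j : j\neq k\}$ from each such $\lambda_j$ and add the total back into $\lambda_k$. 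The resulting map is continuous, fixes $\hat{s}_k$ pointwise, and extends trivially across the $G/H$-factor. Axiom~\ref{ax:homotopy}, together with the long exact sequence of the pair $(G/H\times s, G/H\times\hat{s}_k)$, then forces the vanishing.

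Second, the arrow $\genh_{\ast-1}^G(G/H\times s_k, G/H\times\D s_k)\to \genh_{\ast-1}^G(G/H\times\D s, G/H\times\hat{s}_k)$ induced by inclusion, I would show is an isomorphism via strong excision (Proposition~\ref{prop:excision}). The key geometric observation is that $\hat{s}_k$ is the join $v_k\ast\D s_k$: each point of $\hat{s}_k$ has the form $(1-t)v_k+tq$ for $t\in[0,1]$ and, when $t>0$, a unique $q\in\D s_k$. Let $U_0\subseteq\hat{s}_k$ be the open subcone with $t<1/2$ and $V_0\subseteq\hat{s}_k$ the open subcone with $t<1/4$; set $U\doteq G/H\times U_0$ and $V\doteq G/H\times V_0$, and write $X\doteq G/H\times\D s$, $A\doteq G/H\times\hat{s}_k$. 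Then $U$ is an open $G$-subspace of $A$, $\bar V\subseteq U$, and a radial push in the $t$-coordinate deformation retracts $(X\setminus V, A\setminus V)$ onto $(X\setminus U, A\setminus U)$; Proposition~\ref{prop:excision} therefore makes $(X\setminus U, A\setminus U)\to (X,A)$ an isomorphism. A further radial push, sending $t\to 1$, realizes $(G/H\times s_k, G/H\times\D s_k)$ as a deformation retract of $(X\setminus U, A\setminus U)$, and Axiom~\ref{ax:homotopy} upgrades the inclusion of that pair to an isomorphism as well.

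The principal technical nuisance I anticipate is purely bookkeeping: making sure the cone parameterization and the two radial pushes are continuous, fix the correct subsets pointwise, and honestly restrict to the required pair of pairs (in particular, that the retractions carry $A\setminus V$ into $A\setminus U$ without moving any point of $s_k$). Since $G$ acts only on the $G/H$-factor and all deformations can be written as products with the identity on $G/H$, equivariance and continuity of the homotopies are automatic once the geometry on the $s$-factor is pinned down.
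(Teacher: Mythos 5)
Your argument is correct and follows the same two-step strategy as the paper's proof: identify $\D_\ast$ as an isomorphism via the exact sequence of the triple $(G/H\times s,\,G/H\times\D s,\,G/H\times\hat{s}_k)$ using that $G/H\times\hat{s}_k$ is an equivariant strong deformation retract of $G/H\times s$, and then show the inclusion $(G/H\times s_k,G/H\times\D s_k)\rightarrow(G/H\times\D s,G/H\times\hat{s}_k)$ is an isomorphism by strong excision (Proposition~\ref{prop:excision}). The paper simply asserts these two facts; your explicit barycentric retraction and your cone-coordinate choice of $U$ and $V$ are correct realizations of the unstated details.
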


\begin{proof}
$(G/H\times s_k,G/H\times\D s_k)\rightarrow(G/H\times\D s,G/H\times\hat{s}_k)$ induces an isomorphism in homology by strong excision.  Moreover, since $G/H\times\hat{s}_k$ is an equivariant strong deformation retract of $G/H\times s$, $\D_\ast$ is an isomorphism by the long exact sequence of the triple $(G/H\times s,G/H\times\D s,G/H\times\hat{s}_k)$.
\end{proof}

\begin{lemma}
\label{lem:simpincidence-nat}
Suppose $f:s\rightarrow s'$ is a simplicial map of $p$--simplices and $\eta:G/H\rightarrow G/K$ is equivariant.  If there exists $k,l$ with $f(\hat{s}_k)=\hat{s}_l^\prime$ and $f(s_k)=s_l^\prime$, then $\genh_{\ast-1}^G(\eta\times f)\circ[s:s_k]_\ast^H=[s':s_l^\prime]_\ast^K\circ\genh_\ast^G(\eta\times f)$.
\end{lemma}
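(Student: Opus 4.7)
The strategy is pure naturality: the incidence map $[s:s_k]_\ast^H$ is built from two pieces, namely the boundary of the triple $(G/H\times s,G/H\times\D s,G/H\times\hat{s}_k)$ and the inverse of the excision isomorphism $\genh_{\ast-1}^G(G/H\times s_k,G/H\times\D s_k)\rightarrow\genh_{\ast-1}^G(G/H\times\D s,G/H\times\hat{s}_k)$. The plan is to verify that $\eta\times f$ is compatible with each piece and then concatenate the resulting commuting squares.

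The first step is a geometric observation. Since $\D s=s_k\cup\hat{s}_k$, the hypotheses $f(s_k)=s_l'$ and $f(\hat{s}_k)=\hat{s}_l'$ give $f(\D s)=s_l'\cup\hat{s}_l'=\D s'$. In addition, because $\D s_k=s_k\cap\hat{s}_k$ (and similarly for $\D s_l'$), we have $f(\D s_k)\subseteq f(s_k)\cap f(\hat{s}_k)=s_l'\cap\hat{s}_l'=\D s_l'$. Consequently, $\eta\times f$ restricts to equivariant maps of triples
\[
(G/H\times s,G/H\times\D s,G/H\times\hat{s}_k)\longrightarrow(G/K\times s',G/K\times\D s',G/K\times\hat{s}_l'),
\]
and to an equivariant map of pairs $(G/H\times s_k,G/H\times\D s_k)\rightarrow(G/K\times s_l',G/K\times\D s_l')$ which fits into a commutative square of inclusions with $(G/H\times\D s,G/H\times\hat{s}_k)\rightarrow(G/K\times\D s',G/K\times\hat{s}_l')$.

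Naturality of the long exact sequence of a triple (Proposition \ref{prop:long-nat}) applied to the map of triples above yields a commuting square whose top and bottom arrows are exactly the boundary maps $\D_\ast$ appearing in the definition of $[s:s_k]_\ast^H$ and $[s':s_l']_\ast^K$. Functoriality of $\genh_\ast^G$ applied to the commutative inclusion square yields a second commuting square whose horizontal arrows are the excision isomorphisms from Lemma \ref{lem:simpincidence}; since these horizontal arrows are invertible, the square with the horizontal arrows reversed also commutes. Splicing these two squares together along the common column $\genh_{\ast-1}^G(G/H\times\D s,G/H\times\hat{s}_k)\rightarrow\genh_{\ast-1}^G(G/K\times\D s',G/K\times\hat{s}_l')$ produces the desired commuting rectangle $\genh_{\ast-1}^G(\eta\times f)\circ[s:s_k]_\ast^H=[s':s_l']_\ast^K\circ\genh_\ast^G(\eta\times f)$.

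No step is genuinely difficult; the only real care is bookkeeping in the first step, where one must confirm that the hypotheses on $f$ are precisely what is needed for $\eta\times f$ to be a morphism of the relevant triples and pairs so that Proposition \ref{prop:long-nat} and functoriality apply. Once that is in hand, the rest is the automatic concatenation of two naturality squares.
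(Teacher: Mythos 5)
Your proposal is correct and follows essentially the same route as the paper: decompose the incidence map into the boundary of the triple $(G/H\times s,G/H\times\D s,G/H\times\hat{s}_k)$ followed by the inverse of the excision isomorphism, and then stack a naturality-of-triple square (Proposition \ref{prop:long-nat}) on top of a functoriality square for inclusions. The only difference is that you spell out the bookkeeping that $f$ really does restrict to maps $\D s\to\D s'$ and $(s_k,\D s_k)\to(s_l',\D s_l')$, which the paper leaves implicit.
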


\begin{proof}
This follows from lemma \ref{lem:simpincidence} and the following commutative diagram.
$$\begin{CD}
  \genh_\ast^G(G/H\times s,G/H\times\D s)
    @>{\genh_\ast^G(\eta\times f)}>>
      \genh_\ast^G(G/K\times s',G/K\times\D s')\\
  @V{\D_\ast}VV
    @V{\D_\ast}VV\\
  \genh_{\ast-1}^G(G/H\times\D s,G/H\times\hat{s}_k)
    @>{\genh_{\ast-1}^G(\eta\times f)}>>
            \genh_{\ast-1}^G(G/K\times\D s',
                             G/K\times\hat{s}_l^\prime)\\
  @AAA
    @AAA\\
  \genh_{\ast-1}^G(G/H\times s_k,G/H\times\D s_k)
    @>{\genh_{\ast-1}^G(\eta\times f)}>>
      \genh_{\ast-1}^G(G/K\times s_l^\prime,
                       G/K\times\D s_l^\prime).
\end{CD}$$
Indeed, the square on the top commutes by naturality of the long exact sequence of a triple, and the square on the bottom by functoriality.
\end{proof}

\begin{theorem}
\label{thm:simp-id}
Suppose $H\leq G$, $s$ is a $p$--simplex, and $\pi:G/H\times s\rightarrow G/H$ is the projection map.  The map
$$\zeta(H,s)_\ast:
  \genh_\ast^G(G/H\times s,G/H\times\D s)
    \rightarrow
  \genh_{\ast-p}^G(G/H)
$$
defined inductively by
\begin{equation}\label{eq:simp-iso}
  \zeta(H,s)_\ast
  \doteq
  \begin{cases}
    \genh_\ast^G(\pi)
      &\text{if $p=0$}\\
    (-1)^p\zeta(H,s_p)_{\ast-1}\circ[s:s_p]_\ast^H
      &\text{if $p>0$}
  \end{cases}
\end{equation}
is an isomorphism.
\end{theorem}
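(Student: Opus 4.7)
The plan is to proceed by induction on the dimension $p$ of the simplex $s$, exactly mirroring the inductive nature of the definition \eqref{eq:simp-iso}.

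For the base case $p=0$, the simplex $s=\{v_0\}$ is a single point and $\D s=\emptyset$, so $\zeta(H,s)_\ast=\genh_\ast^G(\pi)$ where $\pi:G/H\times\{v_0\}\to G/H$ is the projection. Since $\pi$ is an equivariant homeomorphism (or at least an equivariant homotopy equivalence, with inverse the map sending $gH\mapsto(gH,v_0)$), the homotopy axiom (Axiom \ref{ax:homotopy}) implies that $\genh_\ast^G(\pi)$ is an isomorphism.

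For the inductive step $p>0$, suppose the result holds for the $(p-1)$-simplex $s_p$; that is, $\zeta(H,s_p)_{\ast-1}:\genh_{\ast-1}^G(G/H\times s_p,G/H\times\D s_p)\to\genh_{\ast-p}^G(G/H)$ is an isomorphism. By Lemma \ref{lem:simpincidence} applied with $k=p$, the incidence map
$$[s:s_p]_\ast^H:\genh_\ast^G(G/H\times s,G/H\times\D s)\to\genh_{\ast-1}^G(G/H\times s_p,G/H\times\D s_p)$$
is an isomorphism. The composition of two isomorphisms (up to the harmless sign $(-1)^p$, which just negates an isomorphism and hence is still an isomorphism) is an isomorphism, so $\zeta(H,s)_\ast$ is an isomorphism, completing the induction.

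There is no real obstacle: all the substantive content has already been handled by Lemma \ref{lem:simpincidence}, whose proof invokes strong excision and the long exact sequence of the triple to ensure the boundary map and the reverse inclusion map in the definition of $[s:s_k]_\ast^H$ are each isomorphisms. The only subtlety worth double-checking is the base case, where one must remember that $\D s=\emptyset$ when $s$ is a $0$-simplex, so the relative homology reduces to absolute homology and the projection $\pi$ really is an equivariant homotopy equivalence with trivial $G$-action on $s$.
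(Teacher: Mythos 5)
Your proof is correct and follows essentially the same route as the paper: base case via the fact that $\pi$ is an equivariant homeomorphism, inductive step via Lemma \ref{lem:simpincidence}. (Minor remark: for $p=0$ you do not actually need the homotopy axiom --- since $\pi$ is a homeomorphism, functoriality alone shows $\genh_\ast^G(\pi)$ is an isomorphism.)
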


\begin{proof}
When $p=0$, $\pi$ is an equivariant homeomorphism; the result then follows by induction and lemma \ref{lem:simpincidence}.
\end{proof}

\begin{lemma}
\label{lem:id-nat}
If $f:s\rightarrow s'$ is an order--preserving simplicial map of $p$--simplices and $\eta:G/H\rightarrow G/K$ is equivariant, then $\zeta(K,s')_\ast\circ\genh_\ast^G(\eta\times f)=\genh_{\ast-p}^G(\eta)\circ\zeta(H,s)_\ast$.
\end{lemma}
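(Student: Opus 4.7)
The plan is to induct on $p$, the dimension of the simplex, using lemma \ref{lem:simpincidence-nat} as the key compatibility at each step. The recursive definition \eqref{eq:simp-iso} of $\zeta$ was engineered precisely so that such an induction goes through, so I expect the proof to be routine once the bookkeeping is set up.

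For the base case $p=0$, both $s$ and $s'$ are single points, so $f$ is forced to be the unique map between them, and the projection maps $\pi: G/H \times s \to G/H$ and $\pi': G/K \times s' \to G/K$ satisfy $\pi' \circ (\eta \times f) = \eta \circ \pi$ as continuous equivariant maps. Since $\zeta(H,s)_\ast = \genh_\ast^G(\pi)$ and $\zeta(K,s')_\ast = \genh_\ast^G(\pi')$, functoriality of $\genh_\ast^G$ gives the base case immediately.

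For the inductive step, note that an order-preserving simplicial map of $p$-simplices is forced to be the vertex bijection $v_i \mapsto v'_i$. Consequently $f$ carries $s_p$ to $s'_p$, $\partial s_p$ to $\partial s'_p$, and $\hat{s}_p$ to $\hat{s}'_p$, so the hypotheses of lemma \ref{lem:simpincidence-nat} are satisfied with $k = l = p$. This yields the first equality below. The restriction $f : s_p \to s'_p$ is itself an order-preserving simplicial map of $(p-1)$-simplices, so the induction hypothesis, applied in degree $\ast - 1$, gives the second equality (noting $(\ast - 1) - (p-1) = \ast - p$). Concatenating with the recursive definition \eqref{eq:simp-iso} produces
\begin{align*}
\zeta(K,s')_\ast \circ \genh_\ast^G(\eta \times f)
 &= (-1)^p \, \zeta(K,s'_p)_{\ast-1} \circ [s':s'_p]_\ast^K \circ \genh_\ast^G(\eta \times f) \\
 &= (-1)^p \, \zeta(K,s'_p)_{\ast-1} \circ \genh_{\ast-1}^G(\eta \times f) \circ [s:s_p]_\ast^H \\
 &= (-1)^p \, \genh_{\ast-p}^G(\eta) \circ \zeta(H,s_p)_{\ast-1} \circ [s:s_p]_\ast^H \\
 &= \genh_{\ast-p}^G(\eta) \circ \zeta(H,s)_\ast,
\end{align*}
which is the claim.

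The only subtlety is the combinatorial observation that order-preservation forces $f(v_p) = v'_p$, so that the top face used in the recursion for $\zeta(K,s')_\ast$ is matched by the image of the top face used for $\zeta(H,s)_\ast$; this is what lets us invoke lemma \ref{lem:simpincidence-nat} with matching indices $k = l = p$. All analytic content, i.e., excision and the triple exact sequence, has already been packaged into lemmas \ref{lem:simpincidence} and \ref{lem:simpincidence-nat}, so nothing harder arises.
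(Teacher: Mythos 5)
Your proof is correct and follows essentially the same route as the paper's: induction on $p$, base case via $\pi'\circ(\eta\times f)=\eta\circ\pi$ and functoriality, inductive step by combining Lemma~\ref{lem:simpincidence-nat} (using that order-preservation forces $f(s_p)=s'_p$ and $f(\hat{s}_p)=\hat{s}'_p$) with the recursion in equation~\eqref{eq:simp-iso}. You simply spell out the chain of equalities that the paper leaves compressed.
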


\begin{proof}
For $p=0$, the statement of the theorem follows by functoriality and the identity $\pi\circ(\eta\times f)=\eta\circ\pi$.  For $p>0$, $f(s_p)=s_p^\prime$ and $f(\hat{s}_p)=\hat{s}_p^\prime$, as $f$ is order--preserving; induction in conjunction with lemma \ref{lem:simpincidence-nat} and equation \eqref{eq:simp-iso} thus yield the theorem.
\end{proof}

\begin{lemma}\label{lem:refl}
If $\rho:s\rightarrow s$ is a simplicial map that exchanges exactly two vertices, then $\genh_\ast^G({\it id}\times\rho):\genh_\ast^G(G/H\times s,G/H\times\D s)\rightarrow\genh_\ast^G(G/H\times s,G/H\times\D s)$ is multiplication by $-1$.
\end{lemma}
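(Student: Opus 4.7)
The plan is to induct on $p$, with $p = 0$ vacuous. For $p \geq 2$ I will reduce the inductive step algebraically to the sub-case where $\rho$ fixes the top vertex $v_p$, which in turn will follow from Lemma~\ref{lem:simpincidence-nat} together with the inductive hypothesis. The substantive work will be the base case $p = 1$, where the sign $-1$ must be extracted from the combinatorial structure of $\partial s$ via the long exact sequence of the pair.

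For the inductive step (assuming $p \geq 2$ and the lemma for $(p-1)$-simplices), suppose first that $\rho$ fixes $v_p$. Then $\rho' \doteq \rho|_{s_p}$ is a transposition on $s_p$, and $\rho(s_p) = s_p$, $\rho(\hat{s}_p) = \hat{s}_p$. Lemma~\ref{lem:simpincidence-nat} with $k = l = p$ will give $[s:s_p]_*^H \circ \genh_*^G(\mathit{id}\times\rho) = \genh_{*-1}^G(\mathit{id}\times\rho') \circ [s:s_p]_*^H$, and since $[s:s_p]_*^H$ is an isomorphism by Lemma~\ref{lem:simpincidence} and $\genh_{*-1}^G(\mathit{id}\times\rho') = -\mathit{id}$ by induction, this forces $\genh_*^G(\mathit{id}\times\rho) = -\mathit{id}$. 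If instead $\rho$ swaps $v_k$ and $v_p$ with $k < p - 1$, I will set $\sigma \doteq (v_{p-1}, v_p)$ and $\rho_0 \doteq (v_k, v_{p-1})$; a direct check gives $\rho = \sigma\rho_0\sigma$, and since $\rho_0$ fixes $v_p$ the previous case yields $\genh^G(\mathit{id}\times\rho_0) = -\mathit{id}$. Commuting the central element $-\mathit{id}$ past $\genh^G(\mathit{id}\times\sigma)$ then gives $\genh^G(\mathit{id}\times\rho) = -\genh^G(\mathit{id}\times\sigma)^2 = -\mathit{id}$ by functoriality and $\sigma^2 = \mathit{id}$. In the remaining sub-case $\rho = (v_{p-1}, v_p)$, I will exploit the identity $(v_{p-1}, v_p) = (v_0, v_p)(v_0, v_{p-1})(v_0, v_p)$ (valid for $p \geq 2$) to reduce to the two preceding sub-cases, giving $\genh^G(\mathit{id}\times\rho) = (-\mathit{id})^3 = -\mathit{id}$.

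For the base case $p = 1$, the simplex $s$ has vertices $v_0 < v_1$, and $\rho$ satisfies $\rho(s_1) = s_0$ and $\rho(\hat{s}_1) = \hat{s}_0$. Lemma~\ref{lem:simpincidence-nat} with $k = 1$, $l = 0$ then gives $\genh_*^G(\mathit{id}\times\rho) = ([s:s_0]_*^H)^{-1} \circ \genh_{*-1}^G(\mathit{id}\times\rho|_{s_1}) \circ [s:s_1]_*^H$. Since $\rho|_{s_1}\colon s_1 \to s_0$ is a homeomorphism of points compatible with the projections $\pi_{s_k}\colon G/H\times s_k \to G/H$, the middle factor reduces to $\genh^G(\pi_{s_0})^{-1}\circ\genh^G(\pi_{s_1})$; setting $f_k \doteq \genh^G(\pi_{s_k})\circ[s:s_k]_*^H$, the expression becomes $\genh_*^G(\mathit{id}\times\rho) = f_0^{-1}\circ f_1$, so it will suffice to prove $f_0 = -f_1$. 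This I will obtain from the long exact sequence of $(G/H\times s, G/H\times\partial s)$: the decomposition $\partial s = \{v_0\}\sqcup\{v_1\}$ together with Proposition~\ref{prop:findisj} identifies $\genh_{*-1}^G(G/H\times\partial s)$ with $\genh_{*-1}^G(G/H)\oplus\genh_{*-1}^G(G/H)$ so that the inclusion-induced map to $\genh_{*-1}^G(G/H\times s) \cong \genh_{*-1}^G(G/H)$ becomes $(a,b)\mapsto a+b$. The image of the connecting map $\partial_*$ is therefore $\{(a, -a)\}$, and projecting onto the two summands yields $f_1 = a$ and $f_0 = -a$, whence $f_0 = -f_1$.

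The main obstacle will be this base case: once the sign is in hand at $p = 1$, the higher-dimensional cases reduce cleanly via Lemma~\ref{lem:simpincidence-nat}, functoriality, and the centrality of $-\mathit{id}$, but the sign itself cannot be delegated to the induction and must come from the explicit structure of $\partial s$ as a two-point space.
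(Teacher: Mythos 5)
Your proof is correct but takes a genuinely different route from the paper's, in both the base case and the inductive step. In the base case $p=1$, you argue directly through the incidence maps $[s:s_k]_\ast^H$ and the long exact sequence of the pair $(G/H\times s,G/H\times\D s)$, reading the sign off the fact that the connecting map lands in the antidiagonal of $\genh_{\ast-1}^G(G/H)\oplus\genh_{\ast-1}^G(G/H)$. The paper instead routes the base case through the reduced-homology machinery of section~\ref{sec:redhom}: it identifies $\rgenh_\ast^G(G/H\times\D s,v_k)$ with $\ker\genh_\ast^G(\pi_k)$ via theorem~\ref{thm:redhombp}, shows the map there is $-1$ using proposition~\ref{prop:findisj}, and transports this along a commuting diagram built from theorem~\ref{thm:redhom-nat}, using contractibility of $G/H\times s$ to get an isomorphism $\rD_\ast$. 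Your version is more elementary and does not lean on theorem~\ref{thm:redhombp}; the one step you compress is the identification of $f_k$ with the opposite coordinate of $S_\ast^{-1}\circ\D_\ast$, which requires unwinding that $[s:s_k]_\ast^H$ is ``connecting map of the triple $(G/H\times s,G/H\times\D s,G/H\times\hat s_k)$ followed by inverse excision'' and checking that the excision factor kills precisely the $v_k$-summand --- true, and worth a sentence. For the inductive step you do more work than necessary: you pinned the face index to $k=p$ and then manufactured three sub-cases via conjugation by $(v_{p-1},v_p)$ and $(v_0,v_p)$. Lemma~\ref{lem:simpincidence-nat} holds for arbitrary $k$, and for $p>1$ every transposition fixes some vertex $v_k$ (there are $p+1\ge 3$ vertices); the paper simply picks such a $k$, applies the lemma with that $k$, and invokes the inductive hypothesis on $s_k$. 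Your three-case reduction is nonetheless sound --- in particular you correctly avoid circularity by using only $\genh_\ast^G(\mathit{id}\times\sigma)^2=\mathit{id}$ and the centrality of $-\mathit{id}$, never the yet-unknown value of $\genh_\ast^G(\mathit{id}\times\sigma)$ --- but the paper's one-line version is simpler.
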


\begin{proof}
Consider the case $p=1$; that is, $\D s=\{v_0,v_1\}$.  For $k=0,1$, let $i_k:G/H\rightarrow G/H\times\D s$ be the map $i_k(gH)\doteq(gH,v_k)$, and $\pi_k:G/H\times\D s\rightarrow G/H\times\{v_k\}$ the map $\pi_k(gH,v)\doteq(gH,v_k)$.  Now by proposition \ref{prop:findisj}, we have an isomorphism $S_\ast:\genh_\ast^G(G/H)\oplus\genh_\ast^G(G/H)\rightarrow\genh_\ast^G(G/H\times\D s)$ given by $S_\ast(x,y)=\genh_\ast^G(i_0)(x)+\genh_\ast^G(i_1)(y)$.  Since $\pi_k\circ i_{k'}=i_k$, one computes that $\genh_\ast^G(\pi_k)\circ S_\ast(x,y)=\genh_\ast^G(i_k)(x+y)$.  Thus $\ker\genh_\ast^G(\pi_k)$ is the image of $\{(x,-x)\mid x\in\genh_\ast^G(G/H)\}$ under $S_\ast$.  Further, since $({\it id}\times\rho)\circ i_k=i_{k'}$, where $k'=1,0$ when $k=0,1$, one computes $\genh_\ast^G({\it id}\times\rho)\circ S_\ast(x,-x)=-S_\ast(x,-x)$.  That is, the vertical arrow on the left in the diagram
$$\begin{CD}
  \ker\genh_{\ast-1}^G(\pi_0)
    @>{(\cong)}>>
      \rgenh_{\ast-1}^G(G/H\times\D s,v_0)
        @<{\rD_\ast}<<
          \genh_\ast^G(G/H\times s,G/H\times\D s)\\
  @VV{\genh_{\ast-1}^G({\it id}\times\rho)}V
    @VV{\genh_{\ast-1}^G({\it id}\times\rho)}V
      @VV{\genh_{\ast-1}^G({\it id}\times\rho)}V\\
  \ker\genh_{\ast-1}^G(\pi_1)
    @>{(\cong)}>>
      \rgenh_{\ast-1}^G(G/H\times\D s,v_1)
        @<{\rD_\ast}<<
          \genh_\ast^G(G/H\times s,G/H\times\D s)
\end{CD}$$
is multiplication by $-1$.  The above diagram commutes: the square on the left by theorem \ref{thm:redhombp} (since $s$ has trivial $H$--action, $G\times_Hs=G/H\times s$), the square on the right by theorem \ref{thm:redhom-nat}.  However, since $G/H\times s$ is contractible, $\rD_\ast$ is an isomorphism; and the lemma follows when $p=1$.

In the case $p>1$, there is a vertex $v_k$ such that $\rho(v_k)=v_k$.  Therefore, $\rho(\hat{s}_k)=\hat{s}_k$ and $\rho(s_k)=s_k$.  Thus by lemma \ref{lem:simpincidence-nat}, $\genh_{\ast-1}^G(G/H\times\rho)\circ[s:s_k]_\ast^H=[s:s_k]_\ast^H\circ\genh_\ast^G(G/H\times\rho)$, and $-[s:s_k]_\ast^H=[s:s_k]_\ast^H\circ\genh_\ast^G(G/H\times\rho)$ follows inductively.  Since $[s:s_k]_\ast^H$ is an isomorphism, we are done.
\end{proof}

\begin{theorem}
\label{thm:simp-id-perm}
If $s$ is a $p$--simplex, $\phi:s\rightarrow s$ is a simplicial map that permutes the vertices, and $\eta:G/H\rightarrow G/K$ is $G$--equivariant, then $\zeta(K,s)_\ast\circ\genh_\ast^G(\eta\times\phi)=(\sign\phi)\genh_{\ast-p}^G(\eta)\circ\zeta(H,s)_\ast$.
\end{theorem}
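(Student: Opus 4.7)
The plan is to reduce this permutation statement to the two ingredients already available: Lemma \ref{lem:id-nat}, which handles \emph{order-preserving} simplicial maps, and Lemma \ref{lem:refl}, which records that a single transposition of vertices acts by $-1$ on $\genh_\ast^G(G/H\times s,G/H\times\D s)$. Since $\eta$ acts on the first factor of $G/H\times s$ and $\phi$ on the second, the two actions separate functorially.

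First, I would factor $\eta\times\phi=(\eta\times{\it id}_s)\circ({\it id}_{G/H}\times\phi)$ and apply covariance of $\genh_\ast^G$ to split
$$\genh_\ast^G(\eta\times\phi)=\genh_\ast^G(\eta\times{\it id}_s)\circ\genh_\ast^G({\it id}\times\phi).$$
Since ${\it id}_s$ is trivially order-preserving, Lemma \ref{lem:id-nat} applied with $f={\it id}_s$ yields
$$\zeta(K,s)_\ast\circ\genh_\ast^G(\eta\times{\it id}_s)=\genh_{\ast-p}^G(\eta)\circ\zeta(H,s)_\ast.$$
This pulls $\eta$ past $\zeta$ (at the cost of a dimension shift) and reduces the theorem to the purely permutation-theoretic statement
$$\zeta(H,s)_\ast\circ\genh_\ast^G({\it id}\times\phi)=(\sign\phi)\,\zeta(H,s)_\ast.$$

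For the permutation part, I would write $\phi=\tau_1\circ\tau_2\circ\cdots\circ\tau_r$ as a product of vertex transpositions, so $\sign\phi=(-1)^r$. Functoriality gives
$$\genh_\ast^G({\it id}\times\phi)=\genh_\ast^G({\it id}\times\tau_1)\circ\cdots\circ\genh_\ast^G({\it id}\times\tau_r),$$
and by Lemma \ref{lem:refl} each factor is multiplication by $-1$ on $\genh_\ast^G(G/H\times s,G/H\times\D s)$. Hence $\genh_\ast^G({\it id}\times\phi)$ is multiplication by $(-1)^r=\sign\phi$, and postcomposing with $\zeta(H,s)_\ast$ gives the displayed identity. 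Combining the two reductions yields the theorem.

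There is no serious obstacle here; the argument simply packages the standard ``signs from transpositions'' trick against the already-proved order-preserving naturality lemma. The only mild subtlety is that $\sign\phi$ must be independent of the chosen decomposition into transpositions, which is the familiar well-definedness fact for the symmetric group and is not specific to this setting.
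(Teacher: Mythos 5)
Your proposal is correct and follows essentially the same route as the paper: it uses the identical factorization $\eta\times\phi=(\eta\times{\it id}_s)\circ({\it id}_{G/H}\times\phi)$, invokes Lemma~\ref{lem:id-nat} to commute $\eta$ past $\zeta$, and then peels off the sign via Lemma~\ref{lem:refl} after decomposing $\phi$ into transpositions. No difference worth noting.
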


\begin{proof}
Let $i:s\rightarrow s$ and $j:G/H\rightarrow G/H$ denote the identity maps.  Since $\eta\times\phi=(\eta\times i)\circ(j\times\phi)$, by lemma \ref{lem:id-nat}, $\zeta(K,s)_\ast\circ\genh_\ast^G(\eta\times\phi)=\zeta(K,s)_\ast\circ\genh_\ast^G(\eta\times i)\circ\genh_\ast^G(j\times\phi)=\genh_{\ast-p}^G(\eta)\circ\zeta(H,s)_\ast\circ\genh_\ast^G(j\times\phi)$.  However, $\phi$ is the composition of simplicial reflections, say $\phi=\rho_1\circ\cdots\circ\rho_n$; and so $j\times\phi=(j\times\rho_1)\circ\cdots\circ(j\times\rho_n)$.  Hence by lemma \ref{lem:refl} and functoriality, $\genh_\ast^G(j\times\phi)=(-1)^n=\sign\phi$.  The theorem now follows.
\end{proof}

\begin{theorem}
\label{thm:id-nat-perm}
Suppose $s,s'$ are ordered $p$--simplices with respective vertices $v_1<\cdots<v_p$ and $v_1'<\cdots<v_p'$, and that $f:s\rightarrow s'$ is a simplicial map such that $f(v_k)=v_{\phi(k)}'$, $0\leq k\leq p$, for some permutation $\phi$.  Then $\zeta(K,s')_\ast\circ\genh_\ast^G(\eta\times f)=(\sign\phi)\,\genh_{\ast-p}^G(\eta)\circ\zeta(H,s)_\ast$ for any equivariant map $\eta:G/H\rightarrow G/K$.
\end{theorem}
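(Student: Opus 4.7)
The plan is to reduce to Lemma~\ref{lem:id-nat} and Theorem~\ref{thm:simp-id-perm} by factoring $f$ as an order--preserving simplicial map composed with a permutation of the source simplex.  Specifically, I would let $g:s\rightarrow s'$ denote the unique order--preserving simplicial isomorphism, so that $g(v_k)=v_k'$ for all $k$, and let $\phi_s:s\rightarrow s$ denote the simplicial automorphism realizing $\phi$ on vertices, i.e.\ $\phi_s(v_k)=v_{\phi(k)}$.  Then $(g\circ\phi_s)(v_k)=g(v_{\phi(k)})=v_{\phi(k)}'=f(v_k)$, so $f=g\circ\phi_s$, and hence $\eta\times f=(\eta\times g)\circ({\it id}_{G/H}\times\phi_s)$.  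Functoriality of $\genh_\ast^G$ then expresses $\genh_\ast^G(\eta\times f)$ as the composite $\genh_\ast^G(\eta\times g)\circ\genh_\ast^G({\it id}_{G/H}\times\phi_s)$.

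Next I would apply the two earlier identities in sequence.  Because $g$ is order--preserving, Lemma~\ref{lem:id-nat} yields $\zeta(K,s')_\ast\circ\genh_\ast^G(\eta\times g)=\genh_{\ast-p}^G(\eta)\circ\zeta(H,s)_\ast$.  Because $\phi_s$ permutes the vertices of $s$, Theorem~\ref{thm:simp-id-perm} applied with the equivariant map taken to be ${\it id}:G/H\rightarrow G/H$ gives $\zeta(H,s)_\ast\circ\genh_\ast^G({\it id}_{G/H}\times\phi_s)=(\sign\phi)\,\zeta(H,s)_\ast$.  Splicing these into the factorization above, in the order dictated by $f=g\circ\phi_s$, immediately produces the identity $\zeta(K,s')_\ast\circ\genh_\ast^G(\eta\times f)=(\sign\phi)\,\genh_{\ast-p}^G(\eta)\circ\zeta(H,s)_\ast$.

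The argument is essentially formal and I do not anticipate any real obstacle.  The only point worth verifying is that the permutation $\phi$ in the hypothesis is unambiguously determined, and thus has a well--defined sign; this holds because $s$ and $s'$ are both ordered and $\phi$ is read off directly from the indexing relation $f(v_k)=v_{\phi(k)}'$.  Beyond that, the proof requires no new naturality, no further excision, and no appeal to the spectral sequence machinery of Section~\ref{sec:spectral}.
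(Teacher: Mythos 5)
Your proof is correct and follows essentially the same route as the paper: both arguments factor $f$ as an order--preserving simplicial map composed with a vertex permutation of $s$, invoke Lemma~\ref{lem:id-nat} for the order--preserving factor and Theorem~\ref{thm:simp-id-perm} for the permutation factor, and conclude by functoriality.  The only cosmetic difference is that the paper attaches $\eta$ to the permutation factor (writing $\eta\times f=(i\times\iota)\circ(\eta\times\chi)$ with $i:G/K\rightarrow G/K$ the identity), whereas you attach $\eta$ to the order--preserving factor; both choices are legitimate and lead to the same two applications of the lemmas.
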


\begin{proof}
Define $\chi:s\rightarrow s$ and $\iota:s\rightarrow s'$ to be the simplicial maps such that $\chi(v_k)=v_{\phi(k)}$ and $\iota(v_k)=v_k'$, for $0\leq k\leq p$.  Then $\eta\times f=(i\times\iota)\circ(\eta\times\chi)$, where $i:G/K\rightarrow G/K$ is the identity, $\chi$ permutates the vertices of $s$ with $\sign\chi=\sign\phi$, and $\iota$ is order preserving.  The statement of the theorem follows now from functoriality, lemma \ref{lem:id-nat}, and theorem \ref{thm:simp-id-perm}.
\end{proof}

\begin{theorem}
\label{thm:simp-perm-id}
If $H\leq G$ and $s$ is a $p$--simplex, then $\zeta(H,s_k)_{\ast-1}\circ[s:s_k]_\ast^H=(-1)^k\zeta(H,s)_\ast$.
\end{theorem}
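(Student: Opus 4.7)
The strategy is to reduce the general case to the case $k = p$, which is essentially the defining recursion \eqref{eq:simp-iso}: multiplying that formula by $(-1)^p$ gives $\zeta(H,s_p)_{\ast-1} \circ [s:s_p]_\ast^H = (-1)^p \zeta(H,s)_\ast$, which is the desired identity when $k = p$. For arbitrary $k$, my plan is to introduce a simplicial permutation of the vertices of $s$ that carries $s_k$ onto $s_p$, and then to invoke the naturality statements of Lemmas \ref{lem:simpincidence-nat} and \ref{lem:id-nat}, together with the sign rule of Theorem \ref{thm:simp-id-perm}, to transport the $k = p$ identity back to $s_k$.

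Specifically, I will take $\phi : s \to s$ to be the simplicial map defined on vertices by $\phi(v_i) = v_i$ for $0 \le i < k$, $\phi(v_k) = v_p$, and $\phi(v_i) = v_{i-1}$ for $k < i \le p$. This is a single cycle of length $p - k + 1$, so $\sign \phi = (-1)^{p-k}$. A direct check shows $\phi(s_k) = s_p$ and $\phi(\hat{s}_k) = \hat{s}_p$; moreover, the restriction $\phi|_{s_k} : s_k \to s_p$ sends the $j$-th vertex of $s_k$ (in the inherited order) to the $j$-th vertex of $s_p$ for every $j$, and hence is order-preserving in the sense required by Lemma \ref{lem:id-nat}.

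With $\phi$ in hand, the identity will follow from a short chain. Lemma \ref{lem:simpincidence-nat} (applied with $\eta = {\it id}$, $f = \phi$, $l = p$) gives $\genh_{\ast-1}^G({\it id} \times \phi|_{s_k}) \circ [s:s_k]_\ast^H = [s:s_p]_\ast^H \circ \genh_\ast^G({\it id} \times \phi)$. I then compose with $\zeta(H,s_p)_{\ast-1}$ on the left: on the left-hand side, Lemma \ref{lem:id-nat} applied to the order-preserving $\phi|_{s_k}$ collapses the composite to $\zeta(H,s_k)_{\ast-1} \circ [s:s_k]_\ast^H$; on the right-hand side, \eqref{eq:simp-iso} replaces $\zeta(H,s_p)_{\ast-1} \circ [s:s_p]_\ast^H$ by $(-1)^p \zeta(H,s)_\ast$, and Theorem \ref{thm:simp-id-perm} (with $\eta = {\it id}$) replaces $\zeta(H,s)_\ast \circ \genh_\ast^G({\it id} \times \phi)$ by $(-1)^{p-k}\, \zeta(H,s)_\ast$. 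Combining yields $\zeta(H,s_k)_{\ast-1} \circ [s:s_k]_\ast^H = (-1)^{2p-k} \zeta(H,s)_\ast = (-1)^k \zeta(H,s)_\ast$, as claimed. The main (indeed essentially only) obstacle will be choosing $\phi$ so that it simultaneously carries $s_k$ onto $s_p$, has an order-preserving restriction (so that Lemma \ref{lem:id-nat} applies), and produces the correct sign when combined with the $(-1)^p$ coming from \eqref{eq:simp-iso}.
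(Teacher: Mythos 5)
Your proposal is correct and follows essentially the same route as the paper: the same permutation $\phi$ (the paper simply writes it explicitly as $\rho_k\circ\cdots\circ\rho_{p-1}$), the same invocations of Lemma \ref{lem:simpincidence-nat}, Lemma \ref{lem:id-nat}, Theorem \ref{thm:simp-id-perm}, and equation \eqref{eq:simp-iso}, and the same sign bookkeeping $(-1)^p(-1)^{p-k}=(-1)^k$. The only difference is presentational: the paper assembles a commutative diagram, while you chain the equations.
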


\begin{proof}
If $k=p$, then this is just equation \eqref{eq:simp-iso}, so we can assume that $k<p$.  Let $\rho_j$ denote the reflection of $s$ that exchanges vertices $v_j$ and $v_{j+1}$, but leaves all other vertices fixed ($0\leq j<p$); and set $\phi\doteq\rho_k\circ\rho_{k+1}\circ\cdots\circ\rho_{p-1}$, so that $\phi(v_i)=v_i$ if $0\leq i<k$, $\phi(v_k)=v_p$, and $\phi(v_i)=v_{i-1}$ if $k<i\leq p$.  Note that $\sign\phi=(-1)^{p-k}$, $\phi(s_k)=s_p$, and $\phi(\hat{s}_k)=\hat{s}_p$.  The diagram
$$\begin{CD}
  \genh_\ast^G(G/H\times s,G/H\times\D s)
    @>{[s:s_k]_\ast^H}>>
      \genh_{\ast-1}^G(G/H\times s_k,G/H\times\D s_k)\\
  @V{\genh_\ast^G({\it id}\times\phi)}VV
    @V{\genh_{\ast-1}^G({\it id}\times\phi)}VV\\
  \genh_\ast^G(G/H\times s,G/H\times\D s)
    @>{[s:s_p]_\ast^H}>>
      \genh_{\ast-1}^G(G/H\times s_p,G/H\times\D s_p)\\  
  @V{\zeta(H,s)_\ast}VV
    @V{\zeta(H,s_p)_{\ast-1}}VV\\
  \genh_{\ast-p}^G(G/H)
    @>{(-1)^p}>>
      \genh_{\ast-p}^G(G/H)
\end{CD}$$
commutes: the top square by lemma \ref{lem:simpincidence-nat}, and the bottom square by equation \eqref{eq:simp-iso}.  Now by theorem \ref{thm:simp-id-perm}, the two vertical arrows on the left can be identified with $(-1)^{p-k}\zeta(H,s)_\ast$.  Moreover, the simplicial map $\phi:s_k\rightarrow s_p$ is order--preserving, so by lemma \ref{lem:id-nat}, the vertical arrows on the right can be identified with $\zeta(H,s_k)_{\ast-1}$.  Consequently, $\zeta(H,s_k)_{\ast-1}\circ[s:s_k]_\ast^H=(-1)^p(-1)^{p-k}\zeta(H,s)_\ast=(-1)^k\zeta(H,s)_\ast$.
\end{proof}

\section{Homology of a simplicial complex}

For the remainder, we assume that $G$ is a locally compact Hausdorff topological group.

\begin{definition}
Let $V$ be a $G$--set.  An {\bf abstract simplicial $G$--complex} is a collection of finite nonempty subsets ${\mathfrak S}$ of $V$ satisfying (i) if $v\in V$, then $\{v\}\in{\mathfrak S}$, (ii) if $S\in{\mathfrak S}$ and $R\subseteq S$, then $R\in{\mathfrak S}$, (iii) if $S\in{\mathfrak S}$, then $gS\in{\mathfrak S}$ for all $g\in G$, and (iv) if $S\in{\mathfrak S}$ and $g\in G_S$, then $gx=x$ for all $x\in S$.
\end{definition}

\noindent
Note that an abstract simplicial $G$--complex is a (nonequivariant) abstract simplicial complex via conditions (i) and (ii).

We recall the standard construction for the realization of an abstract simplicial complex in the nonequivariant case.  Let $\R{V}$ denote the real vector space with basis $V$, and we give it the co--limit topology induced from the metric topology on finite subspaces.  For each abstract simplex $S\in{\mathfrak S}$, we form the convex hull $|S|\subseteq\R{V}$ of $S$.  The realization of ${\mathfrak S}$ is then the union $|{\mathfrak S}|\doteq\cup_{S\in{\mathfrak S}}|S|$, but with the coherent topology with respect to the constituent simplices.  Observe that the $G$--action on $V$ extends linearly to a $G$--action on $|{\mathfrak S}|$.  However, this action will not be continuous unless $G$ is discrete.  On the other hand, using this construction, we may now identify an abstract simplex $S\in{\mathfrak S}$ with its realization $|S|\subseteq\R{V}$.  In the following, we will use lower case Roman letters to emphasize that we have made this identification; i.e., we write $s\in{\mathfrak S}$ to mean $s=|S|$ for some $S\in{\mathfrak S}$.

In \cite{me}, a construction is given for the realization of an abstract simplicial $G$--complex, where $G$ is any locally compact Hausdorff topological group; in the case when $G$ is finite, the resulting space is equivariantly homeomorphic to the above realization.  We give a brief outline of the construction.  As a set, the realization ${\it top}_G({\mathfrak S})$ is the quotient of the disjoint union $\sum_{s\in{\mathfrak S}}G/G_s\times s$ under the following equivalence relation.  For $s,t\in{\mathfrak S}$, $(g_sG_s,x_s)\in G/G_s\times s$ and $(g_tG_t,x_t)\in G/G_t\times t$ are identified if there exists $g\in G$ and $r\in{\mathfrak S}$ such that $x_s\in r\subseteq s$, $x_t\in gr\subseteq t$, and $g_s^{-1}g_tg\in G_r$.  Let $\chi_s:G/G_s\times s\rightarrow{\it top}_G({\mathfrak S})$ be the map induced by inclusion; the image of $\chi_s$ is given the quotient topology, and ${\it top}_G({\mathfrak S})$ is given the coherent topology with respect to all such images.

In this section, we will require the services of the following properties of ${\it top_G}(\mathfrak{S})$.
\begin{enumerate}
  \item With the exception of the Hausdorff axiom\footnote{If $G_s$ is not a closed subset of $G$, which it is not assumed to be, $G/G_s$ is not Hausdorff.}, ${\it top}_G({\mathfrak S})$ is an equivariant cw--complex with respect to the maps $\{\chi_s\}_{s\in\mathcal{R}}$, where $\mathcal{R}$ is any collection of orbit representatives for $\mathfrak{S}$.
  \item $\chi_{gs}=\chi_s\circ\bigl(\mu(g,G_s)\times g^{-1}\bigr)$, where $g^{-1}:gs\rightarrow s$ is left multiplication by $g^{-1}$.  Consequently, $\chi_{gs}$ and $\chi_s$ have the same images in ${\it top}_G(\mathfrak{S})$.
  \item $\chi_{s_k}\circ\bigl(\kappa(G_{s_k},G_s)\times{\it id}\bigr)=\chi_s|G/G_s\times s_k$ for each face $s_k$ of $s$.
\end{enumerate}
In addition, if $\mathfrak{R}$ is an abstract simplicial $G$--complex with $\mathfrak{R}\subseteq\mathfrak{S}$, then ${\it top}_G(\mathfrak{R})$ is a $G$--subspace of ${\it top}_G(\mathfrak{S})$.  Details are provided in \cite{me}.

We say that $X$ is a {\bf simplicial $G$--complex} if it is the realization of an abstract simplicial $G$--complex, and we let $C_\ast(X)$ denote the underlying abstract simplicial $G$--complex; i.e., $X={\it top}_G(C_\ast(X))$.  More generally, we say that $(X,A)$ is a {\bf pair} of simplicial $G$--complexes if $X$ and $A$ are simplicial $G$--complexes with $C_\ast(A)\subseteq C_\ast(X)$.  We let $C_p(X)$ denote the collection of all simplices of $X$ of dimension $p$, so that $C_\ast(X)=\cup_{p\in\Z}C_p(X)$.  Note that $C_p(X)$ is a $G$--set, as are $C_p(A)\subseteq C_p(X)$ and $C_p(X,A)\doteq C_p(X)\setminus C_p(A)$.

If $(X,A)$ is a pair of simplicial $G$--complexes, we may filter by equivariant cell dimension.  That is, since it is an equivariant cw--complex, $X$ is partitioned by open equivariant cells $\chi_s\bigl(G/G_s\times{\rm int}(s)\bigr)$ of dimension $\dim(s)$, where $s$ is in any collection of orbit representatives in $C_\ast(X)$.  We let $X^p$ denote the $p$--skeleton of $X$: the collection of all open equivariant cells of dimension $q$ with $q\leq p$.   The desired filtration is then by sets $X_p\doteq X^p\cup A$.

In the following discussion, it will be assumed that for each (nonequivariant) simplex $s$, we have chosen a preferred ordering of its vertices; however, the vertices of the complex to which $s$ belongs is may or not be (globally) ordered.  Also, it is not assumed that the group action preserves the preferred vertex ordering; i.e., the vertex ordering of $gs$, for $g\in G$, induced by that of $s$ is not necessarily the preferred one.  We will let $\sign(g,s)$ denote the sign of any vertex permutation that puts the vertices of $gs$ into preferred order.

\subsection{Alternate description of chains and boundaries}

Throughout this section, we assume that $(\genh_\ast,\D_\ast)$ is a generalized equivariant homology theory {\em that satisfies the arbitrary sum property.}  In the absence of this, all of the results will hold for simplicial complexes that have a finite number of equivariant cells in each dimension.  Throughout, we let $(X,A)$ be a pair of simplicial $G$--complexes.

\begin{lemma}\label{lem:simp-chains}
If ${\cal R}$ is a collection of orbit representatives for $C_p(X,A)$, then
$$\Theta_\ast:
  \bigoplus_{s\in{\cal R}}
    \genh_\ast^G(G/G_s\times s,G/G_s\times\D{s})
      \rightarrow
  \genh_\ast^G(X_p,X_{p-1}),$$
given by $\Theta_\ast(x)=\genh_\ast^G(\chi_s)(x)$ for $x\in\genh_\ast^G(G/G_s\times s,G/G_s\times\D{s})$, is an isomorphism.
\end{lemma}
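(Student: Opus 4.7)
The plan is to identify $\Theta_\ast$ as the composite of the arbitrary-sum-property isomorphism with a single map of pairs, and then use excision and the homotopy axiom to show that map is an isomorphism.  Concretely, set $(Y,B)\doteq\sum_{s\in{\cal R}}(G/G_s\times s,G/G_s\times\D s)$; the $\chi_s$'s assemble to an equivariant map $\chi:(Y,B)\to(X_p,X_{p-1})$.  The arbitrary sum property gives an isomorphism $\Sigma_\ast:\bigoplus_s\genh_\ast^G(G/G_s\times s,G/G_s\times\D s)\to\genh_\ast^G(Y,B)$, and by construction $\Theta_\ast=\genh_\ast^G(\chi)\circ\Sigma_\ast$, so it suffices to prove $\genh_\ast^G(\chi)$ is an isomorphism.

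For this I would pick an interior point $b_s\in\Int(s)$ for each $s\in{\cal R}$, and set $M'\doteq\sum_s G/G_s\times\{b_s\}\subseteq Y$ and $M\doteq\chi(M')\subseteq X_p\setminus X_{p-1}$.  The radial contraction of $s\setminus\{b_s\}$ onto $\D s$ is fixed on $\D s$, so (combined with the identity on the $G/G_s$ factor) the cellwise contractions agree on faces and assemble to equivariant deformation retractions of $X_p\setminus M$ onto $X_{p-1}$ and of $Y\setminus M'$ onto $B$.  By the homotopy axiom and the long exact sequences of the triples $(X_p,X_p\setminus M,X_{p-1})$ and $(Y,Y\setminus M',B)$, the inclusions induce natural isomorphisms $\genh_\ast^G(X_p,X_p\setminus M)\cong\genh_\ast^G(X_p,X_{p-1})$ and $\genh_\ast^G(Y,Y\setminus M')\cong\genh_\ast^G(Y,B)$.

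Now $X_{p-1}$ and $B$ are closed subcomplexes disjoint from the closed sets $M$ and $M'$, while $X_p\setminus M$ and $Y\setminus M'$ are open; so Axiom~\ref{ax:excision} applies with $U=X_{p-1}$ and $U=B$ respectively, yielding isomorphisms $\genh_\ast^G(X_p\setminus X_{p-1},(X_p\setminus X_{p-1})\setminus M)\cong\genh_\ast^G(X_p,X_p\setminus M)$ and $\genh_\ast^G(Y\setminus B,(Y\setminus B)\setminus M')\cong\genh_\ast^G(Y,Y\setminus M')$.  On these excised pairs $\chi$ restricts to an equivariant homeomorphism, because by the choice of ${\cal R}$ the open equivariant cells $\chi_s(G/G_s\times\Int(s))$ are pairwise disjoint and each $\chi_s$ is injective on $G/G_s\times\Int(s)$.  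Chaining these four isomorphisms through the commuting square shows $\genh_\ast^G(\chi)$, and hence $\Theta_\ast$, is an isomorphism.

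The main obstacle I anticipate is verifying carefully that the cellwise radial contractions glue to a well-defined equivariant deformation retraction of $X_p\setminus M$ onto $X_{p-1}$; this requires the compatibility relation $\chi_{s_k}\circ(\kappa(G_{s_k},G_s)\times{\it id})=\chi_s|G/G_s\times s_k$ and the equivariance property $\chi_{gs}=\chi_s\circ(\mu(g,G_s)\times g^{-1})$ from the enumeration preceding the lemma, to ensure that retractions coming from different orbit representatives (and different orbits thereof) agree on shared faces under the CW identifications.  Once this is in place, the rest of the argument is a routine assembly of the homotopy axiom, the long exact sequence of a triple, ordinary excision, and the arbitrary sum property.
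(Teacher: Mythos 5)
Your proposal is correct and follows essentially the same route as the paper: factor $\Theta_\ast$ through the arbitrary-sum isomorphism and a single map $\chi$, then reduce to the open cells via an equivariant deformation retraction away from chosen interior points, apply excision, and observe that $\chi$ is an equivariant homeomorphism on the resulting pairs. The paper's proof is just a more compressed version of the same argument (it excises $G/G_s\times\D s$ on the disjoint-union side rather than excising $X_{p-1}$ and $B$ symmetrically as you do), and it likewise appeals to Theorem~\ref{thm:arbdisj} at the end.
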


\begin{proof}
The map $\theta:\sum_{s\in{\cal R}}(G/G_s\times s,G/G_s\times\D{s})\rightarrow(X_p,X_{p-1})$, defined by $\theta|(G/G_s\times s,G/G_s\times\D{s})=\chi_s$, induces an isomorphism in homology.  Indeed, $G/G_s\times\D{s}$ is a equivariant retract of $G/G_s\times(s\setminus\{s_b\})$, where $s_b$ is the barycenter of $s$.  So by excising $G/G_s\times\D{s}$ from $\bigl(G/G_s\times s,G/G_s\times(s\setminus\{s_b\})\bigr)$, $\theta$ is the same in homology as the restriction
\begin{multline*}
  \sum_{s\in\mathcal{R}}\bigl(G/G_s\times\Int(s),
         G/G_s\times(\Int(s)\setminus\{s_b\})\bigr)\\
  \quad\xrightarrow{\theta}
    \bigcup_{s\in\mathcal{R}}\bigl(\chi_s(G/G_s\times\Int(s)),
          \chi_s(G/G_s\times(\Int(s)\setminus\{s_b\}))\bigr)
\end{multline*}
(for more details, see the proof of lemma 39.2 in \cite{Munkres}).  As this map is an equivariant homeomorphism, the lemma now follows from theorem \ref{thm:arbdisj}.
\end{proof}

\begin{theorem}
\label{thm:simplex-chains}
If $(X,A)$ is a pair of simplicial $G$--complexes, then the map
$$\alpha_{p,q}:
  \genh_q^G\bigl(C_p(X,A)\bigr)
    \rightarrow
  \A_qC_p^G(X,A),
$$
defined by
$$\alpha_{p,q}([x\otimes gs])
  =\sign(g,s)\,\genh_{p+q}^G(\chi_{gs})
   \circ\zeta(G_s^g,gs)_{p+q}^{-1}(x)
$$
for all $x\in\genh_q^G(G/G_s^g)$ and $s\in C_p(X,A)$, is an isomorphism.
\end{theorem}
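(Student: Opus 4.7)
The plan is to exhibit $\alpha_{p,q}$ as the composition of two already-established isomorphisms, after first verifying that the given formula descends to a well-defined map on the orbit modules. Writing $\M\doteq\genh_q^G$ restricted to $\mathcal{O}(G)$, the domain decomposes as $\genh_q^G(C_p(X,A))=\bigoplus_{[s]}\M[s]$ (sum over orbits of $C_p(X,A)$), and the prescription for $\alpha_{p,q}([x\otimes gs])$ is declared on generators of $\M[s]$. The main work is to check that this prescription vanishes on the defining relations $x\otimes gs-\M(\mu(g,G_s))(x)\otimes s$ of $J_\M[s]$; thereafter, one shows the resulting well-defined map is an isomorphism by reducing, summand-wise, to Theorem \ref{thm:simp-id} and Lemma \ref{lem:simp-chains}.

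For the well-definedness step I would invoke property (2) of the realization, namely $\chi_{gs}=\chi_s\circ(\mu(g,G_s)\times g^{-1})$, together with Theorem \ref{thm:id-nat-perm}. The simplicial bijection $g^{-1}:gs\to s$ carries the preferred ordering of $gs$ to some permutation $\phi$ of the preferred ordering of $s$, and by the very definition of $\sign(g,s)$ one has $\sign(\phi)=\sign(g,s)$. Applying Theorem \ref{thm:id-nat-perm} with $f=g^{-1}$ and $\eta=\mu(g,G_s)$ pulls $\zeta(G_s,s)_{p+q}$ past $\genh_{p+q}^G(\mu(g,G_s)\times g^{-1})$ at the cost of a factor $\sign(g,s)$, which cancels the prefactor $\sign(g,s)$ in the formula for $\alpha_{p,q}([x\otimes gs])$. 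A short manipulation then shows that both $\alpha_{p,q}([x\otimes gs])$ and $\alpha_{p,q}([\M(\mu(g,G_s))(x)\otimes s])$ reduce to the common value $\genh_{p+q}^G(\chi_s)\circ\zeta(G_s,s)_{p+q}^{-1}\circ\genh_q^G(\mu(g,G_s))(x)$, so the relation is killed.

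Once well-definedness is in hand, the isomorphism property is essentially formal. Fix a collection $\mathcal{R}$ of orbit representatives for $C_p(X,A)$. By Lemma \ref{lem:orbitmodule} the map $x\mapsto[x\otimes s]$ identifies $\genh_q^G(G/G_s)$ with $\M[s]$ for each $s\in\mathcal{R}$; using $\sign(e,s)=1$, this identification turns $\alpha_{p,q}|_{\M[s]}$ into $\genh_{p+q}^G(\chi_s)\circ\zeta(G_s,s)_{p+q}^{-1}$. Assembling over $\mathcal{R}$ exhibits $\alpha_{p,q}$ as the composite
$$
\bigoplus_{s\in\mathcal{R}}\genh_q^G(G/G_s)
  \longrightarrow
\bigoplus_{s\in\mathcal{R}}\genh_{p+q}^G(G/G_s\times s,G/G_s\times\D s)
  \xrightarrow{\Theta_{p+q}}
\genh_{p+q}^G(X_p,X_{p-1}),
$$
the first arrow being an isomorphism by Theorem \ref{thm:simp-id} applied summand-wise, and the second by Lemma \ref{lem:simp-chains}.

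The principal obstacle will be the sign bookkeeping in the well-definedness step: convincing the reader that left multiplication by $g^{-1}$ on the preferred vertex orderings of $gs$ and $s$ realizes a permutation whose sign is exactly $\sign(g,s)$, and then tracking this sign carefully through Theorem \ref{thm:id-nat-perm} so that it cancels the explicit $\sign(g,s)$ appearing in the definition of $\alpha_{p,q}$. Everything else — the direct-sum decomposition of $\M(C_p(X,A))$ into orbit modules, and the factorization as a composite of two isomorphisms — proceeds by direct assembly from the previously cited results.
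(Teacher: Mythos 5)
Your proposal is correct and is essentially the paper's argument, just read in the opposite direction: the paper constructs the composite $\genh_n^G(X_p,X_{p-1})\xleftarrow{\Theta_\ast}\oplus_{s\in\mathcal{R}}\genh_n^G(G/G_s\times s,G/G_s\times\D s)\xrightarrow{\oplus\zeta(G_s,s)_n}\oplus_{s\in\mathcal{R}}\genh_q^G(G/G_s)\to\genh_q^G(C_p(X,A))$ as a chain of isomorphisms (so well-definedness is automatic) and then verifies the formula on a general class $[x\otimes gs]$ via property (2) of $\chi$ and Theorem \ref{thm:id-nat-perm}, while you verify well-definedness of the given formula first and then identify it with that same composite on orbit representatives. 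The key sign computation ($\sign\phi=\sign(g,s)$ from $g^{-1}:gs\to s$ through Theorem \ref{thm:id-nat-perm}) and the use of Lemmas \ref{lem:orbitmodule}, \ref{lem:simp-chains} and Theorem \ref{thm:simp-id} are identical in both.
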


\begin{proof}
Set $n\doteq p+q$.  By lemmas \ref{lem:orbitmodule} and \ref{lem:simp-chains}, and theorem \ref{thm:simp-id}, we have that the following sequence of compositions is an isomorphism:
\begin{multline*}
  \genh_q^G\bigl(C_p(X,A)\bigr)
  \leftarrow
  \oplus_{s\in{\cal R}}\genh_q^G(G/G_s)
  \xleftarrow{\oplus\zeta(G_s,s)_n}\\
  \oplus_{\sigma\in{\cal R}}
    \genh_n^G(G/G_s\times s,G/G_s\times\D{s})
  \xrightarrow{\Theta_\ast}
  \genh_n^G(X_p,X_{p-1}).
\end{multline*}
We claim that this is precisely $\alpha_{p,q}$.  Indeed, $[x\otimes gs]=[\genh_q^G(\mu_g)(x)\otimes s]$, so that $[x\otimes gs]$ maps to $\genh_n^G(\chi_s)\circ\zeta(G_s,s)_n^{-1}\circ\genh_q^G(\mu_g)(x)$ under the above composition.  However,
\begin{align*}
  &\genh_n^G(\chi_s)\circ\zeta(G_s,s)_n^{-1}
   \circ\genh_q^G(\mu_g)\\
  &\quad\quad=(\sign g^{-1})\,\genh_n^G(\chi_s)
      \circ\genh_n^G(\mu_g\times g^{-1})
        \circ\zeta(G_s^g,gs)_n^{-1}\\
  &\quad\quad=(\sign g^{-1})\,\genh_n^G(\chi_{gs})
      \circ\zeta(G_s^g,gs)_n^{-1}
\end{align*}
by theorem \ref{thm:id-nat-perm} and functoriality.  Moreover, $\sign g^{-1}=\sign(g,s)$.
\end{proof}

\begin{lemma}
\label{lem:simp-bound-elem}
Suppose $G/H\times s$ is an equivariant $p$--simplex.  If we identify $s$ with $\{eH\}\times s$, then the following diagram commutes:
$$\begin{CD}
  \A_qC_p^G(G/H\times s)
    @>{\A_q\D_p}>>
      \A_qC_{p-1}^G(G/H\times s)\\
  @A{\alpha_{p,q}}AA
    @A{\alpha_{p-1,q}}AA\\
  \genh_q^G\bigl(C_p(G/H\times s)\bigr)
    @>{\D_p}>>
      \genh_q^G\bigl(C_{p-1}(G/H\times s)\bigr),
\end{CD}$$
where for all $x\in\genh_q^G(G/H^g)$, we have $\D_p[x\otimes gs]=\sum_{k=0}^p(-1)^k[x\otimes gs_k]$.
\end{lemma}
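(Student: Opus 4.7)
My plan is to pass to the direct-sum decomposition of the target $\A_qC_{p-1}^G(G/H\times s)=\genh_{p+q-1}^G(G/H\times\D s,G/H\times(\D s)^{(p-2)})$ indexed by the faces $s_k$, and then compare the two compositions one summand at a time. Since the $G$-action is trivial on the simplex factor, every face $s_k$ has stabiliser $G_{s_k}=H$, so Lemma \ref{lem:simp-chains} applied to the pair $(G/H\times s,\emptyset)$ yields an isomorphism
\[
\Theta_\ast:\bigoplus_{k=0}^p\genh_{p+q-1}^G(G/H\times s_k,G/H\times\D s_k)\xrightarrow{\ \cong\ }\A_qC_{p-1}^G(G/H\times s).
\]
To single out the $k$-th summand I use $\pi_k:\A_qC_{p-1}^G(G/H\times s)\to\genh_{p+q-1}^G(G/H\times\D s,G/H\times\hat s_k)$ induced by the inclusion of pairs: since $s_l\subseteq\hat s_k$ for $l\neq k$, the composition $\pi_k\circ\Theta_\ast$ vanishes on every summand except the $k$-th, on which it is the excision isomorphism appearing in the definition of $[s:s_k]_\ast^H$.

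The heart of the argument is the identification $\pi_k\circ\A_q\D_p=\bigl(\text{excision iso}\bigr)\circ[s:s_k]_\ast^H$, which I expect to obtain from Proposition \ref{prop:long-nat} applied to the map of triples $(G/H\times s,G/H\times\D s,G/H\times(\D s)^{(p-2)})\to(G/H\times s,G/H\times\D s,G/H\times\hat s_k)$: the top triple boundary composed with $\pi_k$ equals the bottom triple boundary, and the latter is precisely the first arrow used to define $[s:s_k]_\ast^H$. Theorem \ref{thm:simp-perm-id} then rewrites $[s:s_k]_\ast^H$ as $(-1)^k\zeta(H,s_k)_{p+q-1}^{-1}\circ\zeta(H,s)_{p+q}$, so the $k$-th component of $\A_q\D_p$ is pinned down in terms of the $\zeta$-isomorphisms and a single sign.

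To verify the diagram on a generator $[x\otimes gs]$, I first use Lemma \ref{lem:genindep} together with $G_{s_k}=G_s=H$ to rewrite $[x\otimes gs]=[\genh_q^G(\mu_g)(x)\otimes s]$ and $[x\otimes gs_k]=[\genh_q^G(\mu_g)(x)\otimes s_k]$, reducing to $g=e$. In that case $\chi_s$ is the identity, so $\alpha_{p,q}[x\otimes s]=\zeta(H,s)_{p+q}^{-1}(x)$; the previous paragraph then delivers $(-1)^k\zeta(H,s_k)_{p+q-1}^{-1}(x)$ into the $k$-th summand, which agrees exactly with $(-1)^k\alpha_{p-1,q}[x\otimes s_k]$. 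This simultaneously establishes commutativity of the diagram and the well-definedness of the formula $\D_p[x\otimes gs]=\sum_k(-1)^k[x\otimes gs_k]$ on the orbit module.

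I expect the main obstacle to be the naturality identification of $\pi_k\circ\A_q\D_p$ with (an excised form of) $[s:s_k]_\ast^H$: one has to set up the map of triples precisely and track how the skeletal triple boundary is funnelled through the sub-pair $G/H\times\hat s_k$. Once that naturality square is in hand, the sign bookkeeping is a one-line appeal to Theorem \ref{thm:simp-perm-id} and the reduction to $g=e$ is purely formal.
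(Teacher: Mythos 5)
Your proposal is correct and follows essentially the same route as the paper's own proof: decompose $\A_qC_{p-1}^G(G/H\times s)$ via Lemma \ref{lem:simp-chains}, use the inclusion $(X_{p-1},X_{p-2})\to(G/H\times\D s,G/H\times\hat s_k)$ as a projection that kills every summand but the $k$-th, recognize the composition with $\A_q\D_p$ as the incidence map $[s:s_k]_\ast^H$, and finish with Theorem \ref{thm:simp-perm-id} and the reduction to $g=e$. The only cosmetic differences are that you cite Proposition \ref{prop:long-nat} and Lemma \ref{lem:genindep} where the paper simply says ``by definition of the boundary map of a triple'' and ``it suffices to assume $g=e$''; the underlying steps are identical.
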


\begin{proof}
It suffices to verify the formula for $\D_p$ in the case $g=e$.  We set $X\doteq G/H\times s$.  Since $C_p(X)=\{gs\mid gH\in G/H\}$ and $C_{p-1}(X)=\{gs_k\mid gH\in G/H,0\leq k\leq p\}$, linearity implies that we only need to show that ${\it proj}_k\circ\D_p[x\otimes s]=(-1)^k[x\otimes s_k]$, where ${\it proj}_k:\genh_q^G\bigl(C_p(X)\bigr)\rightarrow\genh_q^G[s_k]$ is the projection map.  Now, $X_p=X^p=G/H\times s$ and $X_{p-1}=X^{p-1}=G/H\times\D s$.  Set $n\doteq p+q$.  We claim that in the diagram
\begin{equation}\label{eq:d-special}
\begin{CD}
  \genh_q^G\bigl(C_p(G/H\times s)\bigr)
    @>{\alpha_{p,q}}>>
      \genh_n^G(X_p,X_{p-1})\\
  @V{\D_p}VV
    @V{{\cal A}_q\D_p}VV\\
  \genh_q^G\bigl(C_{p-1}(G/H\times s)\bigr)
    @>{\alpha_{p-1,q}}>>
      \genh_{n-1}^G(X_{p-1},X_{p-2})\\
  @V{{\it proj}_k}VV
    @VVV\\
  \genh_q^G[s_k]
    @>{\alpha_{p-1,q}}>>
      \genh_{n-1}^G(G/H\times\D s,G/H\times\hat{s}_k)\\
  @|
    @AAA\\
  \genh_q^G[s_k]
    @>{\alpha_{p-1,q}}>>
      \genh_{n-1}^G(G/H\times s_k,G/H\times\D s_k),
\end{CD}
\end{equation}
(i) the bottom two squares commute, and that (ii) the composition of vertical arrows on the right is equal to $[s:s_k]_n^H$.  The formula for $\D_p$ then follows:
\begin{align*}
  \alpha_{p-1,q}\circ{\it proj}_k\circ\D_p([x\otimes s])
  &=[s:s_k]_n^H\circ\alpha_{p,q}([x\otimes s])\\
  &=[s:s_k]_n^H\circ\zeta(H,s)_n^{-1}(x)\\
  &=(-1)^k\zeta(H,s_k)_{n-1}^{-1}(x)\\
  &=(-1)^k\alpha_{p-1,q}([x\otimes s_k]),
\end{align*}
by theorems \ref{thm:simp-perm-id} and \ref{thm:simplex-chains}, and the fact that $\chi_s$ and $\chi_{s_k}$ are both the identity in this case.

Let us justify claim (i).  For the middle square of diagram \eqref{eq:d-special}, theorem \ref{thm:simplex-chains} gives
\begin{align*}
  \genh_{n-1}^G(i_k)\circ\alpha_{p-1,q}([x\otimes s_l])
  &=\genh_{n-1}^G(i_k)
    \circ\genh_{n-1}^G(\chi_{s_l})
    \circ\zeta(H,s_l)_{n-1}^{-1}(x)\\
  &=\genh_{n-1}^G(i_k\circ\chi_{s_l})
    \circ\zeta(H,s_l)_{n-1}^{-1}(x),
\end{align*}
where $i_k:(X_{p-1},X_{p-2})\rightarrow(G/H\times\D s,G/H\times\hat{s}_k)$ is the inclusion.  If $l=k$, then this is just $\alpha_{p-1,q}([x\otimes s_k])$.  However, if $l\neq k$, then $s_l\subseteq\hat{s}_k$, so that we can factor $i_k\circ\chi_{s_l}$ as
$$(G/H\times s_l,G/H\times\D s_l)
  \rightarrow
  (G/H\times\hat{s}_k,G/H\times\hat{s}_k)
  \rightarrow
  (G/H\times\D s,G/H\times\hat{s}_k)
$$
($\chi_{s_l}$ is the inclusion map in this case); hence $\genh_\ast^G(i_k\circ\chi_{s_l})=0$.  The bottom square in diagram \eqref{eq:d-special} also commutes by theorem \ref{thm:simplex-chains}.

To justify claim (ii), we see that the composition of the top two vertical arrows on the right in diagram \eqref{eq:d-special} is equal to the boundary map
$$\genh_n^G(G/H\times s,G/H\times\D s)
  \xrightarrow{\D_n}
  \genh_{n-1}^G(G/H\times\D s,G/H\times\hat{s}_k),
$$
by definition of the boundary map of a triple.  The claim now follows from definition of the incidence map (lemma \ref{lem:simpincidence}).
\end{proof}

\begin{theorem}
\label{thm:simplex-boundaries}
If $(X,A)$ is a $G$--pair of simplicial $G$--complexes, the diagram
$$\begin{CD}
  \A_qC_p^G(X,A)
    @>{\A_q\D_p}>>
      \A_qC_{p-1}^G(X,A)\\
  @A{\alpha_{p,q}}AA
    @A{\alpha_{p-1,q}}AA\\
  \genh_q^G\bigl(C_p(X,A)\bigr)
    @>{\D_p}>>
      \genh_q^G\bigl(C_{p-1}(X,A)\bigr)
\end{CD}$$
commutes, where for all $x\in\genh_q^G(G/G_s^g)$ and $s\in C_p(X,A)$,
$$\D_p[x\otimes gs]
  =\sum_{k=0}^p(-1)^k
     \left[\genh_q^G(\kappa(G_{s_k}^g,G_s^g))(x)
           \otimes gs_k\right].
$$
\end{theorem}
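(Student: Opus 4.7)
The strategy is to reduce to the single-simplex case handled by Lemma \ref{lem:simp-bound-elem} via naturality of the triple-boundary under the attaching map $\chi_s$, and then to unpack the resulting expression using property 3 of ${\it top}_G(\mathfrak{S})$.

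First I would reduce to the case $g = e$. Applying the orbit-module relation $[x \otimes gs] = [\genh_q^G(\mu(g,G_s))(x) \otimes s]$ from Definition \ref{def:orbitmodule} on both sides of the claimed formula and using the identity $\mu(g,G_{s_k}) \circ \kappa(G_{s_k}^g, G_s^g) = \kappa(G_{s_k},G_s) \circ \mu(g,G_s)$ supplied by Lemma \ref{lem:mu-kappa}(v) (applicable since $G_s \leq G_{s_k}$), the case of a general $g$ follows from the case $g = e$.

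For $g = e$, fix a representative $s \in C_p(X,A)$ and set $Y \doteq G/G_s \times s$ with its skeletal filtration. The attaching map $\chi_s : Y \to X$ carries $Y^k$ into $X_k$, so Proposition \ref{prop:long-nat} shows that $\chi_s$ intertwines the triple-boundaries $\A_q\D_p^Y$ and $\A_q\D_p$. Moreover, the attaching map for $s$ inside $Y$ is the identity, so by Theorem \ref{thm:simplex-chains}, $\alpha_{p,q}([x \otimes s]) = \genh_{p+q}^G(\chi_s)(\zeta(G_s,s)^{-1}(x))$ and $\alpha_{p,q}^Y([x \otimes s]) = \zeta(G_s,s)^{-1}(x)$. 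Applying $\A_q\D_p$ to the first expression, pushing the triple-boundary past $\chi_s$, and invoking Lemma \ref{lem:simp-bound-elem} in $Y$ yields
\[
  \A_q\D_p \circ \alpha_{p,q}([x \otimes s])
    = \sum_{k=0}^p (-1)^k \genh_{p+q-1}^G(\chi_s \circ \chi_{s_k}^Y) \circ \zeta(G_s,s_k)^{-1}(x),
\]
where $\chi_{s_k}^Y : G/G_s \times s_k \to Y$ is the inclusion.

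Finally I would rewrite each summand in terms of $\alpha_{p-1,q}$ on the $X$ side. Property 3 of ${\it top}_G(\mathfrak{S})$ gives $\chi_s \circ \chi_{s_k}^Y = \chi_{s_k} \circ (\kappa(G_{s_k},G_s) \times {\it id})$, and Lemma \ref{lem:id-nat} (applied with $\eta = \kappa(G_{s_k},G_s)$ and the identity map on $s_k$) turns $\genh_{p+q-1}^G(\kappa \times {\it id}) \circ \zeta(G_s,s_k)^{-1}$ into $\zeta(G_{s_k},s_k)^{-1} \circ \genh_q^G(\kappa(G_{s_k},G_s))$. Composing, the $k$-th summand becomes $\alpha_{p-1,q}([\genh_q^G(\kappa(G_{s_k},G_s))(x) \otimes s_k])$, completing the theorem. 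The real bookkeeping obstacle is that the stabilizer $G_{s_k}$ of a face in $X$ may be strictly larger than $G_s$; this is precisely what forces the $\kappa$ map into the boundary formula, and property 3 of the realization together with Lemma \ref{lem:id-nat} are what handle the discrepancy cleanly.
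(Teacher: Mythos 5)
Your proposal is correct and follows essentially the same route as the paper: reduce to $g=e$, pull the triple-boundary back along $\chi_s$ by naturality, apply Lemma \ref{lem:simp-bound-elem} on the single equivariant simplex $G/G_s\times s$, and then use property~3 of ${\it top}_G$ together with Lemma \ref{lem:id-nat} to rewrite each summand through $\kappa(G_{s_k},G_s)$. The one place where you add value is the reduction to $g=e$: the paper dismisses it with ``it suffices to assume $g=e$'', whereas you make it precise by applying the orbit-module relation and Lemma~\ref{lem:mu-kappa}(v), and that verification does need the inclusion $G_s\leq G_{s_k}$, which is guaranteed by condition~(iv) in the definition of an abstract simplicial $G$--complex. (One small citation note: the paper credits the naturality square to Proposition~\ref{prop:triple}, while you cite Proposition~\ref{prop:long-nat}; yours is the more precise reference, as what is being used is naturality of the triple-boundary under the map $\chi_s$.)
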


\begin{proof}
Again, it suffice to assume $g=e$.  We make the abbreviations $H\doteq G_s$ and $n\doteq p+q$.  Note that $s^{p-1}=\D s$.  The following diagram commutes:
$$\begin{CD}
  \genh_n^G(X_p,X_{p-1})
    @>{\A_q\D_p}>>
      \genh_{n-1}^G(X_{p-1},X_{p-2})\\
  @A{\genh_n^G(\chi_s)}AA
    @AA{\genh_{n-1}^G(\chi_s)}A\\
  \genh_n^G(G/H\times s,G/H\times\D s)
    @>{\D_n}>>
      \genh_{n-1}^G(G/H\times\D s,G/H\times s^{p-2})\\
  @A{\alpha_{p,q}}A{\zeta(H,s)_n^{-1}}A
    @AA{\alpha_{p-1,q}}A\\
      \genh_q^G[s]
    @>{\D_p}>>
      \oplus_{k=0}^p\genh_q^G[s_k].
\end{CD}$$
Indeed, the top square commutes by proposition \ref{prop:triple}, and the bottom square commutes by lemma \ref{lem:simp-bound-elem}.  Thus we have
\begin{align*}
  \A_q\D_p\circ\alpha_{p,q}([x\otimes s])
  &=\A_q\D_p\circ\genh_n^G(\chi_s)\circ\zeta(H,s)_n^{-1}(x)\\
  &=\genh_{n-1}^G(\chi_s)\circ\alpha_{p-1,q}
    \circ\D_p[x\otimes s]\\
  &=\sum_{k=0}^p(-1)^k\genh_{n-1}^G(\chi_s)\circ
    \alpha_{p-1,q}([x\otimes s_k])\\
  &=\sum_{k=0}^p(-1)^k\genh_{n-1}^G(\chi_s)\circ
    \zeta(H,s_k)_{n-1}^{-1}(x),
\end{align*}
since $\chi_{s_k}:(G/H\times s_k,G/H\times\D s_k)\rightarrow(G/H\times\D s,G/H\times s^{p-2})$ is the inclusion.  Set $H_k\doteq G_{s_k}$ and $\kappa_k\doteq\kappa(H_k,H)$, and let $i:(s,\D s)\rightarrow(X_p,X_{p-1})$ denote the inclusion map.  Since $H\leq H_k$, one sees that $\chi_{s_k}\circ(\kappa_k\times i)=\chi_s$ when restricted to $(G/H\times s_k,G/H\times\D s_k)$.  Thus, by lemma \ref{lem:id-nat},
\begin{align*}
  \A_q\D_p\circ\alpha_{p,q}([x\otimes\sigma])
  &=\sum_{k=0}^p(-1)^k\genh_{n-1}^G(\chi_{s_k})\circ
    \genh_{n-1}^G(\kappa_k\times i)\circ
    \zeta(H,s_k)_{n-1}^{-1}(x)\\
  &=\sum_{k=0}^p(-1)^k\genh_{n-1}^G(\chi_{s_k})\circ
    \zeta(H_k,s_k)_{n-1}^{-1}\circ\genh_q^G(\kappa_k)(x)\\
  &=\sum_{k=0}^p(-1)^k\alpha_{p-1,q}
    ([\genh_q^G(\kappa_k)(x)\otimes s_k]).\qedhere
\end{align*}
\end{proof}

\begin{theorem}
\label{thm:Aq}
If $(X,A)$ is a $G$--pair of simplicial $G$--complexes, then we have $\A_q\genh_p^G(X,A)\cong\A_0H_p^G(X,A;\genh_q^G)$.
\end{theorem}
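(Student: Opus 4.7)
The plan is to compute both sides using the explicit description of the associated homology groups provided by Theorems \ref{thm:simplex-chains} and \ref{thm:simplex-boundaries}, and observe that the two resulting chain complexes coincide under a natural identification.

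First, I would apply Theorems \ref{thm:simplex-chains} and \ref{thm:simplex-boundaries} directly to the homology theory $\genh_\ast^G$ to obtain a chain isomorphism $\alpha_{\ast,q}:(\genh_q^G(C_\ast(X,A)),\D_\ast)\to(\A_qC_\ast^G(X,A),\A_q\D_\ast)$, where the boundary $\D_p$ on the left is given by the explicit formula
$$\D_p[x\otimes gs]=\sum_{k=0}^{p}(-1)^k\bigl[\genh_q^G(\kappa(G_{s_k}^g,G_s^g))(x)\otimes gs_k\bigr].$$
Taking homology yields $\A_q\genh_p^G(X,A)\cong H_p(\genh_q^G(C_\ast(X,A)),\D_\ast)$.

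Next, I would apply the same two theorems to the equivariant singular homology theory $H_\ast^G(\cdot;\M)$ with $\M\doteq\genh_q^G|_{\mathcal{O}(G)}$, which is a generalized equivariant homology theory by Proposition \ref{prop:singhom} and satisfies the arbitrary sum property by a preceding theorem. Using parameter $0$ in place of $q$ (since $\A_0$ is what appears on the right), this gives an isomorphism of chain complexes
$$\alpha^{\mathrm{sing}}_{\ast,0}:\bigl(H_0^G(\cdot;\M)(C_\ast(X,A)),\D_\ast\bigr)\to\bigl(\A_0C_\ast^G(X,A;\M),\A_0\D_\ast\bigr),$$
where by Theorem \ref{thm:simplex-boundaries} the boundary on the left carries $[x\otimes gs]$ to $\sum_k(-1)^k[H_0^G(\kappa(G_{s_k}^g,G_s^g);\M)(x)\otimes gs_k]$. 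Taking homology yields $\A_0H_p^G(X,A;\M)\cong H_p(H_0^G(\cdot;\M)(C_\ast(X,A)),\D_\ast)$.

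Finally, I would invoke the natural isomorphism $i_H:H_0^G(G/H;\M)\to\M(G/H)=\genh_q^G(G/H)$ from Proposition \ref{prop:singhom}. Since the extension of a coefficient system to $G$-sets in Definition \ref{def:coeffext} is built functorially out of the coefficient system, naturality of $\{i_H\}$ induces an isomorphism of extended functors $H_0^G(\cdot;\M)(S)\cong\genh_q^G(S)$ for every $G$-set $S$, intertwining the action of $H_0^G(\kappa;\M)$ with $\genh_q^G(\kappa)$ on orbit modules. Applied to $S=C_\ast(X,A)$, this identifies the two chain complexes above, including their boundaries. The theorem then follows by passing to homology.

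The main obstacle is the naturality bookkeeping in the last step: one must verify that the natural isomorphism $i_H$, which a priori is defined on $\mathcal{O}(G)$, extends compatibly to the full category of $G$-sets, respects the module operations defining $\M(S)$ and the submodules $J_\M[s]$, and transforms the map $H_0^G(\kappa;\M)$ into $\genh_q^G(\kappa)$. Once this single naturality check is made, the comparison of the two boundary formulas is immediate and the result reduces to an isomorphism of identical chain complexes.
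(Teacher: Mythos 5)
Your proposal is correct and follows essentially the same route as the paper: both compute the two associated chain complexes via Theorems \ref{thm:simplex-chains} and \ref{thm:simplex-boundaries}, then invoke the natural identification from Proposition \ref{prop:singhom} between the coefficient system $\genh_q^G|_{\mathcal{O}(G)}$ and $H_0^G(\cdot;\genh_q^G)|_{\mathcal{O}(G)}$ to conclude the chain complexes agree. The ``naturality bookkeeping'' you flag as the main obstacle is exactly what the paper silently assumes when it asserts the two coefficient systems are equivalent, so your added caution makes the argument more explicit but does not change its substance.
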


\begin{proof}
By proposition \ref{prop:singhom}, $H_0^G$ (with coefficients in $\genh_q^G$ understood) and $\genh_q^G$ define equivalent covariant coefficient systems upon restriction to ${\cal O}(G)$.  In particular, $H_0^G\bigl(C_p(X,A);\genh_q^G\bigr)=\genh_q^G\bigl(C_p(X,A)\bigr)$.  That is, the associated chains for $(\genh_\ast^G,\D_\ast)$ and $(H_\ast^G,\D_\ast)$ are isomorphic by theorem \ref{thm:simplex-chains}.  Furthermore by theorem \ref{thm:simplex-boundaries}, under this isomorphism the boundary maps are identical since $[\genh_q^G(\kappa(G_{s_k},G_s))(x)\otimes s_k]=[H_0^G(\kappa(G_{s_k},G_s);\genh_q^G)(x)\otimes s_k]$.
\end{proof}

\subsection{Consequences of the spectral sequence}

As in the previous subsection, we assume that we are given a $G$--pair $(X,A)$ of simplicial $G$--complexes, and that the generalized equivariant homology theory $(\genh_\ast^G,\D_\ast)$ satisfies the arbitrary sum property, or that $X$ (hence $A$) is finite in each dimension.  In addition, to make use of the spectral sequence, we must also assume that $(X,A)$ is homologically stable; that is, the $G$--filtration $\{X_p\doteq X^p\cup A\}_{p\in\Z}$ is homologically stable.  This is necessarily the case if $(X,A)$ is finite dimensional; in the next subsection, we will give criteria under which this is the case for the infinite dimensional case.

\begin{theorem}
\label{thm:sing-chains}
If $\M$ is a covariant coefficient system and $(X,A)$ is a $G$--pair of simplicial $G$--complexes, then
$$H_n^G(X_p,X_{p-1};\M)
  \cong
  \begin{cases}
    \M(C_p(X,A))          &\text{if $n=p$,}\\
    0                     &\text{if $n\neq p$}.
  \end{cases}
$$
\end{theorem}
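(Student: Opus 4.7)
The plan is to simply apply the structural results of the previous subsection to the particular generalized equivariant homology theory $(H_\ast^G(-;\M),\D_\ast)$. By Proposition \ref{prop:singhom} this is indeed a generalized equivariant homology theory, and by the theorem immediately following it, equivariant singular homology satisfies the arbitrary sum property, so the hypotheses of the preceding subsection apply to it.

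First, I would invoke Theorem \ref{thm:simplex-chains} with $\genh_\ast^G\doteq H_\ast^G(-;\M)$ to obtain the isomorphism
$$\alpha_{p,q}:H_q^G\bigl(C_p(X,A);\M\bigr)\xrightarrow{\cong}\A_qC_p^G(X,A)=H_{p+q}^G(X_p,X_{p-1};\M),$$
where the left-hand side denotes the $G$-set extension (Definition \ref{def:coeffext}) of the coefficient system obtained by restricting $H_q^G(-;\M)$ to $\mathcal{O}(G)$.

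Next, Proposition \ref{prop:singhom} identifies $H_q^G(-;\M)$ restricted to $\mathcal{O}(G)$: it is the trivial coefficient system when $q\neq 0$, and it is naturally isomorphic to $\M$ when $q=0$. The orbit module construction of Definition \ref{def:orbitmodule} and the $G$-set extension of Definition \ref{def:coeffext} are manifestly functorial in the coefficient system, so these identifications on $\mathcal{O}(G)$ pass to identifications after extension to the $G$-set $C_p(X,A)$. Consequently, for $n=p+q$ with $q\neq 0$ the left side of the isomorphism vanishes, giving $H_n^G(X_p,X_{p-1};\M)=0$; and for $q=0$ the left side is isomorphic to $\M\bigl(C_p(X,A)\bigr)$, giving $H_p^G(X_p,X_{p-1};\M)\cong\M\bigl(C_p(X,A)\bigr)$.

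In short, nearly all of the work has already been done in the preceding section: the only real task is to unwind the orbit module extension for the particular coefficient system $H_q^G(-;\M)|_{\mathcal{O}(G)}$. The one point that deserves explicit care, and which I would call out as the main (albeit modest) obstacle, is verifying that the naturality of the isomorphism between $H_0^G(-;\M)|_{\mathcal{O}(G)}$ and $\M$ from Proposition \ref{prop:singhom} is genuine naturality with respect to all morphisms of $\mathcal{O}(G)$, so that it lifts to an isomorphism of the extended functors on $G$-sets; this is essentially what is meant by the last sentence of Proposition \ref{prop:singhom}, namely compatibility with the maps $\M(\eta)\circ i_H=i_K\circ H_0^G(\eta)$.
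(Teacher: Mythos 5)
Your proposal is correct and follows exactly the route the paper takes: the paper's proof of Theorem \ref{thm:sing-chains} consists of the single sentence ``This follows from theorem \ref{thm:simplex-chains} and proposition \ref{prop:singhom},'' and your write-up is simply a careful unwinding of that citation, including the point about naturality of the identification of $H_0^G(-;\M)|_{\mathcal{O}(G)}$ with $\M$ that the paper leaves implicit. No gaps.
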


\begin{proof}
This follows from theorem \ref{thm:simplex-chains} and proposition \ref{prop:singhom}.
\end{proof}

\begin{theorem}
\label{thm:simp-singhom}
Suppose $(X,A)$ is a $G$--pair of simplicial $G$--complexes that is stable with respect to equivariant singular homology, and that $\M$ is a coefficient system. Then $H_n^G(X,A;\M)\cong\A_0H_n^G(X,A;\M)$.
\end{theorem}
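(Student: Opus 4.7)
The plan is to apply the spectral sequence of Section \ref{sec:spectral} to the generalized equivariant homology theory $\genh_\ast^G \doteq H_\ast^G(\cdot;\M)$ and to the skeletal filtration $\{X_p \doteq X^p\cup A\}_{p\in\Z}$, and then to observe that Theorem \ref{thm:sing-chains} forces the spectral sequence to collapse onto a single row.

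By the stability hypothesis and Theorem \ref{thm:genspec}, one obtains a spectral sequence with $E^2_{p,q} = \A_q H_p^G(X,A;\M)$ converging to the filtration $\Phi^s H_n^G(X,A;\M)$, which is bounded by Lemma \ref{lem:genfiltration}. Theorem \ref{thm:sing-chains} then gives $E^1_{p,q} = H_{p+q}^G(X_p,X_{p-1};\M) = 0$ whenever $p+q\neq p$; that is, the $E^1$ page is concentrated on the single row $q=0$, and hence so is every subsequent page.

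Because each $d^r$ for $r\geq 2$ has bidegree $(-r,r-1)$ with $r-1\neq 0$, every such differential has either source or target off the row $q=0$, and therefore vanishes. Consequently the sequence collapses at $E^2$, yielding $E^\infty_{p,0} = E^2_{p,0} = \A_0 H_p^G(X,A;\M)$ and $E^\infty_{p,q}=0$ for $q\neq 0$. Fixing $n$, convergence identifies $E^\infty_{p,n-p}$ with the associated graded piece $\Phi^p H_n^G(X,A;\M)/\Phi^{p-1}H_n^G(X,A;\M)$, which is therefore zero for $p\neq n$ and equal to $\A_0 H_n^G(X,A;\M)$ for $p=n$. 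Since the filtration is bounded and has only one nontrivial graded piece, the whole of $H_n^G(X,A;\M)$ coincides with that piece, giving the asserted isomorphism.

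There is no substantive obstacle here once the hypotheses of Theorem \ref{thm:genspec} are matched up and the singular--chain identification of Theorem \ref{thm:sing-chains} is in hand; the entire argument is the standard collapse of a spectral sequence concentrated on one line, with the bounded--filtration hypothesis doing the work of ruling out nontrivial extensions (which in any case are trivial since only one graded quotient is nonzero).
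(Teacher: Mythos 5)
Your argument is correct, and it relies on the same engine as the paper's proof: Theorem \ref{thm:genspec} for the identification $E^2_{p,q}\cong\A_qH_p^G(X,A;\M)$, and Theorem \ref{thm:sing-chains} to force $E^1_{p,q}=0$ for $q\neq 0$, hence collapse at $E^2$. Where you genuinely diverge is the final step: you deduce $H_n^G\cong E^\infty_{n,0}=\A_0H_n^G$ abstractly, by noting that the filtration is bounded (Lemma \ref{lem:genfiltration}) with exactly one nonzero associated graded quotient, so there is no extension problem to solve. The paper instead \emph{re-derives} the filtration quotients by hand: it uses Theorem \ref{thm:sing-chains} together with the long exact sequence of the triple $(X_r,X_{r-1},A)$ to prove $H_p^G(X_{r-1},A;\M)\cong H_p^G(X_r,A;\M)$ for $r\neq p,p+1$ and an epimorphism at $r=p$, inducts down to conclude $\Phi^{p-1}H_p^G=0$, and inducts up combined with homological stability to conclude $\Phi^pH_p^G=H_p^G$. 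Both routes are valid; yours buys brevity by exploiting the convergence statement already proved in Theorem \ref{thm:genspec}, while the paper's explicit computation is more self-contained and, as a byproduct, makes the role of homological stability (and of $H_p^G(A,A;\M)=0$) visible concretely rather than through the black box of bounded convergence.
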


\begin{proof}
By theorem \ref{thm:genspec}, the spectral sequence \eqref{eq:genDEdef} converges with $E_{p,q}^2=\A_qH_p^G(X,A;\M)$.  However by theorem \ref{thm:sing-chains}, $E_{p,q}^1=0$ if $q\neq 0$.  So by dimensional considerations, the spectral sequence collapses at the $E^2$--term.  Moreover by theorem \ref{thm:sing-chains} and the exact sequence of the triple $(X_r,X_{r-1},A)$, we have (i) $H_p^G(X_r,A;\M)\cong H_p^G(X_{r-1},A;\M)$ for all $r\neq p,p+1$, and (ii) $H_p^G(X_p,A;\M)\rightarrow H_p^G(X_{p+1},A;\M)$ is an epimorphism.  By induction, (i) yields $H_p^G(X_{p-1},A;\M)\cong H_p^G(A,A;\M)=0$.  Therefore, in the notation of section \ref{sec:spectral}, we have $\phi^{p-1}H_p^G(X,A;\M)=0$.  From (i) we also have $H_p^G(X_{p+1},A;\M)\cong H_p^G(X_{p+k},A;\M)$ for all $k>1$; and so
$$\phi^pH_p^G(X,A;\M)
  ={\rm im}\{H_p^G(X_p,A;\M)\rightarrow H_p^G(X,A;\M)\}
  =H_p^G(X,A;\M),
$$
by (ii) and homological stability.  Consequently, $E_{p,0}^2\cong\phi^pH_p^G(X,A;\M)\cong H_p^G(X,A;\M)$.
\end{proof}

\begin{theorem}
If $(X,A)$ is a $G$--pair simplicial $G$--complexes that is stable with respect to both equivariant singular homology and $(\genh_\ast^G,\D_\ast)$, then the spectral sequence of the filtration $\{X^n\cup A\mid n\in\Z\}$ converges to $\genh_\ast^G(X,A)$, with $E_{p,q}^2\cong H_p^G(X,A;\genh_q^G)$.
\end{theorem}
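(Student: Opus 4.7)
The plan is to compose the three preceding theorems; no new geometric input is needed, and the entire argument reduces to verifying that the hypotheses of each cited result are already in force.

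First, because $(X,A)$ is homologically stable with respect to $(\genh_\ast^G,\D_\ast)$, theorem \ref{thm:genspec} applies directly to the filtration $\{X^n\cup A\}_{n\in\Z}$ and gives simultaneously the convergence of the induced spectral sequence to the filtration $\Phi^s\genh_\ast^G(X,A)$ defined in \eqref{eq:genfiltration}, together with the identification $E_{p,q}^2=\A_q\genh_p^G(X,A)$. This settles the convergence half of the statement immediately.

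Next, the restriction of $\genh_q^G$ to $\mathcal{O}(G)$ is a covariant $R$--coefficient system, as noted at the end of subsection 6.1. Consequently theorem \ref{thm:Aq} supplies a canonical isomorphism $\A_q\genh_p^G(X,A)\cong\A_0H_p^G(X,A;\genh_q^G)$. Finally, since by hypothesis $(X,A)$ is also stable with respect to equivariant singular homology, I would apply theorem \ref{thm:simp-singhom} with coefficient system $\M\doteq\genh_q^G$ to obtain $\A_0H_p^G(X,A;\genh_q^G)\cong H_p^G(X,A;\genh_q^G)$. Composing the three isomorphisms establishes $E_{p,q}^2\cong H_p^G(X,A;\genh_q^G)$, as required.

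The only point that deserves explicit comment, and where I expect the slightest friction, is the matching of hypotheses in the last step: theorem \ref{thm:simp-singhom} requires homological stability for the singular theory $H_\ast^G(\,\cdot\,;\genh_q^G)$, so the phrase \emph{stable with respect to equivariant singular homology} in the statement should be read as holding uniformly in the coefficient system $\genh_q^G$ for every $q\in\Z$. Under this reading the proof is a three-line citation, with no genuine obstacle to overcome.
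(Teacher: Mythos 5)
Your proposal is correct and follows exactly the paper's own argument: invoke theorem \ref{thm:genspec} for convergence and the identification $E_{p,q}^2=\A_q\genh_p^G(X,A)$, then apply theorem \ref{thm:Aq} followed by theorem \ref{thm:simp-singhom} to finish. Your closing remark about how ``stable with respect to equivariant singular homology'' should be read uniformly across the coefficient systems $\genh_q^G$ is a fair observation, but it does not change the substance of the argument.
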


\begin{proof}
The spectral sequence converges with $E_{p,q}^2=\A_q\genh_p^G(X,A)$, by theorem \ref{thm:genspec}.  By theorems \ref{thm:simp-singhom} and \ref{thm:Aq}, $E_{p,q}^2\cong\A_0H_p^G(X,A;\genh_q^G)\cong H_p^G(X,A;\genh_q^G)$.
\end{proof}

\subsection{A criterion for homological stability}

\begin{theorem}\label{thm:h-stable}
Suppose that $(\genh_\ast^G,\D_\ast)$ has compact supports, and that there exists an integer $n_0$ such that $\genh_n^G(G/H)=0$ for all $n\leq n_0$ and all $H\leq G$.  If $(X,A)$ is a $G$--pair of simplicial $G$--complex such that the subspaces $X_p$ are compact, then $\{X_p\}_{p\in\Z}$ is homologically stable with respect to $(\genh_\ast^G,\D_\ast)$.
\end{theorem}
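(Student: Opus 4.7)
My plan is to argue stabilization cell by cell using the long exact sequence of the triple, and then to identify $\genh_n^G(X,A)$ with the limit along the filtration using compact supports. First, combining theorems \ref{thm:simplex-chains} and \ref{thm:simp-id} gives an isomorphism
\begin{equation*}
  \genh_m^G(X_q,X_{q-1})
    \cong \bigoplus_{s\in\mathcal{R}_q}\genh_{m-q}^G(G/G_s),
\end{equation*}
where $\mathcal{R}_q$ is a collection of orbit representatives of $C_q(X,A)$; here the arbitrary sum property needed by theorem \ref{thm:simplex-chains} follows from compact supports by theorem \ref{thm:arbdisj}. The vanishing hypothesis then makes each summand trivial whenever $m-q\leq n_0$, so $\genh_m^G(X_q,X_{q-1})=0$ for every $q\geq m-n_0$.

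Fix $n\in\Z$ and set $p_n\doteq n+1-n_0$. For every $q\geq p_n$ the modules $\genh_{n+1}^G(X_q,X_{q-1})$ and $\genh_n^G(X_q,X_{q-1})$ both vanish, so the long exact sequence of the triple $(X_q,X_{q-1},A)$ forces the map $\genh_n^G(X_{q-1},A)\to\genh_n^G(X_q,A)$ to be an isomorphism. Iterating, $\genh_n^G(X_p,A)\to\genh_n^G(X_{p+k},A)$ is an isomorphism for all $p\geq p_n$ and all $k\geq 0$.

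Third, I would identify $\genh_n^G(X,A)$ with $\lim_p \genh_n^G(X_p,A)$. Since $A$ is a subcomplex contained in the compact set $X_0$, it is itself compact, and each $(X_p,A)$ is a compact $G$--pair. By compact supports and proposition \ref{prop:limit}, $\genh_n^G(X,A)\cong\lim_{(Y,B)\in\mathcal{C}}\genh_n^G(Y,B)$; I would show that the subsystem $\{(X_p,A)\}_{p\geq 0}$ is cofinal in $\mathcal{C}$ by noting that $B\subseteq A$ is automatic and that the compact set $Y$ lies in some $X_p$. Combining this limit identification with the stabilization above then yields that $\genh_n^G(X_p,A)\to\genh_n^G(X,A)$ is an isomorphism for every $p\geq p_n$.

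The main obstacle is verifying that every compact subset $Y\subseteq X$ lies in some $X_p$. While this is standard for nonequivariant CW--complexes, here the individual orbit cells $\chi_s(G/G_s\times\mathrm{int}(s))$ need not be compact and ${\it top}_G(\mathfrak{S})$ need not even be Hausdorff. I would adapt the classical argument: from any compact $Y$ meeting infinitely many orbit cells, choose one point per cell to produce an infinite subset of $Y$ that is closed and discrete in the coherent topology, contradicting compactness.
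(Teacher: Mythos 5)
Your argument matches the paper's almost step for step: both use theorem~\ref{thm:simplex-chains} (plus theorem~\ref{thm:simp-id}) together with the vanishing hypothesis to kill $\genh_m^G(X_q,X_{q-1})$ in the relevant range, then run the long exact sequence of the triple $(X_q,X_{q-1},A)$ to show the filtration maps stabilize, and finally pass to $\genh_n^G(X,A)$ via compact supports, proposition~\ref{prop:limit}, and cofinality of $\{(X_p,A)\}$ in the directed set of compact pairs. The one place you are more careful than the paper is the last step: the paper asserts cofinality with only a citation, while you correctly flag that one must show a compact $Y\subseteq X$ lies in some skeleton $X_p$ and sketch the classical ``closed discrete infinite subset'' argument; in the possibly non-Hausdorff setting of ${\it top}_G(\mathfrak S)$ that sketch does need a word of justification that the chosen infinite set is in fact closed (finite subsets of the non-Hausdorff cells need not be closed a priori), but this is a gap shared with the source you are reconstructing, not a defect of your approach relative to it.
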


\begin{proof}
Set $p_0\doteq n-n_0$.  Theorem \ref{thm:simplex-chains} implies that $\genh_{n+1}^G(X_{p+1},X_p)$ is isomorphic to $\genh_{n-p}^G(C_{p+1}(X_{p+1},X_p))$; and thus $\genh_{n+1}^G(X_{p+1},X_p)=0$ for $n-p\leq n_0$.  From the exact sequence
$$\genh_{n+1}^G(X_{p+1},X_p)
  \xrightarrow{\D_n}\genh_n^G(X_p,A)
  \rightarrow\genh_n^G(X_{p+1},A)
  \rightarrow\genh_n^G(X_{p+1},X_p),
$$
$\genh_n^G(X_p,A)\rightarrow\genh_n^G(X_{p+1},A)$ is an isomorphism for all $p\geq p_0$.  It follows that $\genh_n^G(X_{p_0},A)\rightarrow\genh_n^G(X_p,A)$ is an isomorphism for all $p\geq p_0$; and hence inclusion induces $\genh_n^G(X_p,A)\cong\lim_{r\geq p}\genh_n^G(X_r,A)$, for each $p\geq p_0$ (see corollary 4.8 of chapter VIII in \cite{ES}).  Now, $\{(X_r,A)\mid r\geq p\}$ is cofinal in the set ${\cal C}$ of all compact pairs $(Y,B)\subseteq(X,A)$; hence inclusion also induces $\lim_{r\geq p}\genh_n^G(X_r,A)\cong\lim_{(Y,B)\in{\cal C}}\genh_n^G(Y,B)$ (see corollary 4.14 of chapter VIII in \cite{ES}).  The lemma now follows by proposition \ref{prop:limit}.
\end{proof}

The subspaces $X_p$ will be compact, for instance, when $X$ is Hausdorff and finite in each dimension, and $G$ is compact.  In particular, theorem \ref{thm:h-stable} applies to equivariant singular homology in this case.


\end{document}